\newcommand{\Ab}{\mathbb{A}}
\newcommand{\Bb}{\mathbb{B}}
\newcommand{\Cb}{\mathbb{C}}
\newcommand{\Nb}{\mathbb{N}}
\newcommand{\Rb}{\mathbb{R}}
\newcommand{\Zb}{\mathbb{Z}}
\newcommand{\Hs}{\mathsf{H}}
\newcommand{\Ws}{\mathsf{W}}
\newcommand{\Hb}{\textbf{\upshape H}}
\newcommand{\Xb}{X}
\newcommand{\Ac}{\mathcal{A}}
\newcommand{\Bc}{\mathcal{B}}
\newcommand{\Cc}{\mathcal{C}}
\newcommand{\Dc}{\mathcal{D}}
\newcommand{\Fc}{\mathcal{F}}
\newcommand{\Jc}{\mathcal{J}}
\newcommand{\Kc}{\mathcal{K}}
\newcommand{\Lc}{\mathcal{L}}
\newcommand{\Pc}{\mathcal{P}}
\newcommand{\Rc}{\mathcal{R}}
\newcommand{\ah}{{\alpha}}
\newcommand{\bh}{{\beta}}
\newcommand{\ii}{{\textbf{\upshape i}}}
\DeclareMathOperator{\alge}{alg}
\DeclareMathOperator{\BUC}{BUC}
\DeclareMathOperator{\coker}{coker}
\DeclareMathOperator{\im}{im}
\DeclareMathOperator{\ind}{ind}
\DeclareMathOperator{\osc}{osc}
\DeclareMathOperator{\esssup}{esssup}
\DeclareMathOperator{\plim}{\Pc-lim}
\DeclareMathOperator{\PC}{PC}
\DeclareMathOperator{\SO}{SO}
\DeclareMathOperator{\APer}{AP}
\DeclareMathOperator{\rk}{rank}
\DeclareMathOperator{\slim}{\underset{\textit{n}\to\infty}{s-lim \,}}
\DeclareMathOperator{\slimnn}{s-lim \,}
\DeclareMathOperator{\plimn}{\underset{\textit{n}\to\infty}{\Pc-lim \,}}
\DeclareMathOperator{\phlimn}{\underset{\textit{n}\to\infty}{\hat{\Pc}-lim \,}}
\DeclareMathOperator{\sgn}{sgn}
\DeclareMathOperator{\supp}{supp}
\providecommand{\lb}[1]{\Lc(#1)}
\providecommand{\lc}[1]{\Kc(#1)}
\providecommand{\pb}[1]{\Lc(#1,\Pc)}
\providecommand{\pc}[1]{\Kc(#1,\Pc)}
\newtheorem{thm}{Theorem}
\newtheorem{lem}[thm]{Lemma}
\newtheorem{prop}[thm]{Proposition}
\newtheorem{cor}[thm]{Corollary}
\theoremstyle{definition}
\newtheorem{defn}[thm]{Definition}
\newtheorem{rem}[thm]{Remark}
\begin{document}
\begin{frontmatter}



\title{Approximation sequences on Banach spaces: a rich approach}



\author[Mascarenhas]{Helena Mascarenhas}
\ead{hmasc@math.tecnico.ulisboa.pt}

\author[Santos]{Pedro A. Santos}
\ead{pedro.santos@tecnico.ulisboa.pt}

\author[Seidel]{Markus Seidel}
\ead{markus.seidel@fh-zwickau.de}

\address[Mascarenhas]{
Departamento de Matem\'atica, 
Instituto Superior T\'ecnico, 
Universidade de Lisboa, 
Av. Rovisco Pais, 
1049-001 Lisboa, 
Portugal}

\address[Santos]{
Departamento de Matem\'atica, 
Instituto Superior T\'ecnico, 
Universidade de Lisboa, 
Av. Rovisco Pais, 
1049-001 Lisboa, 
Portugal}

\address[Seidel]{
University of Applied Sciences Zwickau, 
Dr.-Friedrichs-Ring 2a, 
08056 Zwickau, 
Germany}

\begin{abstract}
Criteria for the stability of finite sections of a large class of
convolution type operators on $L^p(\Rb)$
are obtained. In this class almost all classical symbols are
permitted, namely operators of multiplication with functions in
$[\PC ,\SO, L^\infty_0]$ and convolution operators (as well as Wiener-Hopf
and Hankel operators) with symbols in $[\PC,\SO,\APer,\BUC]_p$.
We use a simpler and more powerful algebraic technique than all previous works: the application of $\Pc$-theory together with the rich sequences concept and localization.
Beyond stability we study Fredholm theory in sequence algebras.
In particular, formulas for the asymptotic behavior of 
approximation numbers and Fredholm indices are given. 
\end{abstract}

\begin{keyword}
Convolution operator \sep Quasi-banded operator \sep Finite section method \sep
Fredholm index \sep Splitting property \sep Hankel operator \sep Rich sequence

\MSC[2010] 65R20 \sep 45E10 \sep 47G10 \sep 47B35 \sep 47L80 \sep 47A58



\end{keyword}

\end{frontmatter}


\section{Introduction}
\label{intro}
Consider an infinite dimensional Banach space $X$. Denote by $\Lc (X)$ the Banach algebra of all bounded linear operators on $X$, and by $\Kc(X)$ the ideal of the compact operators in $\Lc(X)$.

Given $A \in \Lc(X)$, a fundamental problem of (linear) numerical mathematics is to approximate the solution of the operator equation
\begin{equation} \label{eq:i1}
Au = v, \quad \mbox{for given} \; v \in X.
\end{equation}
A standard procedure is to choose a sequence of projections $P_n$ which converge in some sense to the identity operator, and a sequence of operators $A_n : \im P_n \to \im P_n$ such that the $A_n P_n$ converge to $A$, and to replace equation \eqref{eq:i1} with the ``simpler'' equations
\begin{equation} \label{AP}
A_n u_n = P_n v \quad \mbox{for} \; n = 1, 2, \ldots,
\end{equation}
the solutions $u_n$ being sought in $\im P_n$. The crucial question is if this method {\em applies} to $A$, {\em i.e.} if the Equations \eqref{AP} possess unique solutions $u_n$ for every right-hand side $v$ and for every sufficiently large ${n}$, and if these solutions $u_n$ converge
to the solution $u$ of the original Equation \eqref{eq:i1}. It is easy to see that  the  applicability of the method is equivalent to the stability of the sequence $\{A_{n}\}$ by which we mean that there is an $n_{0} \in \Nb$ such that for $n> n_{0}$ the operators $A_{n}$ are invertible and the norms of the inverses are uniformly bounded.
On the other hand, it is well known that the invertibility of $A$ is not enough to guaranty  the applicability of the method or the stability of the sequence. A related problem, which originates from statistical physics where finite-dimensional systems of very large dimension (on the order of $10^{8}$) appear, is to infer properties of the large finite-dimensional operator from its infinite-dimensional counterpart. 

The projections $P_{n}$ are chosen depending on the space $X$, the operator $A$, and the purpose for  studying \eqref{eq:i1}. If $X$ is a Lebesgue function space, typical projections are related to spline approximation methods or the finite section method (FSM). For function spaces on the real line, the projections associated with the FSM are simply the operators $P_n: =\chi_{[-n,n]}I$ of multiplication with the characteristic functions of the interval $[-n,n]$, respectively.

Approximation methods have been studied since the mid 1970s by many authors with the use of the so-called algebraic techniques. The main idea is to embed the approximation sequence $\{A_{n}\}_{n \in \Nb}$ in a certain algebra of approximation sequences, in which the applicability of the method is easily related to an invertibility problem in that algebra. The application of Banach algebra techniques to singular integral and convolution operators of increasing complexity in several directions (from continuous coefficients or symbols, to piecewise continuous, slowly oscillating, almost-periodic and so on;  similarly from the Cauchy singular integral operators, to Wiener-Hopf, then Wiener-Hopf plus Hankel operators, or even algebras generated by such operators) first on the Lebesgue space $L^{2}(\Rb)$ (resulting in C$^{*}$-algebras), then on $L^{p}(\Rb)$ for $1 \leq p \leq \infty$ has led to a great increase in the technical difficulty of the proofs. 

Our main objective in this paper is in a sense to  apply a simpler but more powerful and robust variant of the algebraic technique that is capable of solving the applicability problem with the greatest generality in terms of spaces, symbols and coefficients and with much simpler proofs. 

We will prove the following main result, which shows that for a large class  of operators, the finite section method is applicable if and only if the operator is invertible. In the other cases our method gives the necessary and sufficient conditions for the applicability of the FSM, in terms of the invertibility of an additional collection of operators, each a homomorphic image of the original approximation sequence, which we call {\it snapshots}.

\begin{thm}\label{TIntro}
Let $A$ be an operator with its related finite section sequence $\Ab=\{A_n\}$ belonging to a certain class $\Ac_1$.  Then the Finite Section Method applies to $A$ if and only if all snapshots are invertible in $\Lc(X)$.
\end{thm}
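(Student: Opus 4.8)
The plan is to recast the assertion as an invertibility problem in a quotient of a sequence algebra. First I would embed the finite section sequence $\Ab=\{A_n\}$ into the Banach algebra $\Fc$ of all bounded sequences $\{B_n\}$ with $B_n\in\Lc(\im P_n)$, equipped with entrywise operations and the supremum norm, and let $\Gc$ be the closed two-sided ideal of null sequences (those with $\|B_n\|\to 0$). By the standard Kozak-type argument, the finite section method applies to $A$ — equivalently, $\Ab$ is stable in the sense of the introduction — if and only if the coset $\Ab+\Gc$ is invertible in $\Fc/\Gc$. This reduces the theorem to a single invertibility statement in $\Fc/\Gc$, and it is here that the snapshots must be shown to be a complete set of invertibility witnesses.

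Next I would identify each snapshot as the image of $\Ab$ under a unital algebra homomorphism $W_t\colon\Fc\to\Lc(X)$ of the form $W_t(\{B_n\})=\slim E_n^{(t)}B_nF_n^{(t)}$, where the $E_n^{(t)},F_n^{(t)}$ are the shift and reflection operators adapted to the local point $t$ (one of which recovers $A$ itself). Each such $W_t$ annihilates $\Gc$ and therefore descends to a homomorphism on $\Fc/\Gc$. The necessity direction is then immediate: if $\Ab+\Gc$ is invertible in $\Fc/\Gc$, then applying the homomorphism $W_t$ shows that every snapshot $W_t(\Ab)$ is invertible in $\Lc(X)$.

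For sufficiency I would argue by localization. Using an Allan--Douglas type local principle over the maximal ideal space of the commutative central subalgebra generated by the coefficient data, the global invertibility of $\Ab+\Gc$ is reduced to invertibility of the local representatives, and these representatives are precisely governed by the snapshots $W_t(\Ab)$ together with their translated copies. The rich sequences and $\Pc$-theory framework is decisive at this point: it furnishes the compactness substitute — a $\Pc$-Fredholm theory on $L^p(\Rb)$ replacing the genuine compactness arguments unavailable for $p\neq 2$ — which guarantees that the family $\{W_t\}$ is \emph{sufficient}, so that invertibility of all local snapshots forces invertibility of the coset, and hence stability.

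The hard part will be exactly this sufficiency step: from the snapshot inverses one must construct a genuine inverse of $\Ab$ modulo $\Gc$ and verify that the error committed when patching the locally inverting sequences together via a partition of unity is a null sequence. This is where membership of $\Ab$ in the restricted class $\Ac_1$ is used — the class is tailored precisely so that all remainder terms are $\Pc$-compact and the patching leaves only a perturbation in $\Gc$, making the invertibility of all snapshots both necessary and sufficient for the applicability of the finite section method.
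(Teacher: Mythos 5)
There is a genuine gap in the sufficiency step, and it lies exactly where you locate ``the hard part.'' You propose to localize the coset $\Ab+\Gc$ in $\Fc/\Gc$, where $\Gc$ is the ideal of null sequences. But the central subalgebra used for localization in this setting is $\Cc=\{\{\varphi_nI\}:\varphi\in C(\dot{\Rb})\}$ (dilated continuous functions, not an algebra ``generated by the coefficient data''), and these sequences do \emph{not} commute with the generators of $\Ac_1$ modulo $\Gc$: for $b=\chi_{[t,\infty)}$ the commutator $[\{\varphi_nI\},\{W^0(b)\}]$ equals $\{U_{-t}Z_nKZ_n^{-1}U_tI\}$ with $K$ compact (Proposition \ref{PComm1}), which is a lifted compact sequence with norm bounded away from zero, hence not in $\Gc$. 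So Allan--Douglas cannot be applied in $\Fc/\Gc$; it must be applied in the quotient $\Lc_h^T/\Jc_h^T$ by the much larger ideal $\Jc^T$ of compact-type sequences, which contains precisely these lifted ($\Pc$-)compact operators from every direction $t$. Localization then only yields $\Jc^T$-Fredholmness of $\Ab_h$, i.e.\ invertibility modulo $\Jc^T$, which is strictly weaker than stability. The missing idea is the lifting theorem (Theorems \ref{TTstructMain}.3 and \ref{TRich}.4): stability is recovered from $\Jc^T$-Fredholmness \emph{together with} invertibility of all snapshots, and this step rests on the separability condition (II) and the $\alpha$-, $\beta$-number apparatus, not on patching local inverses into a null-sequence perturbation. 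Your plan collapses these two tiers into one, and the single-tier version fails at the commutator computation above.

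Two further points. First, the sequences in $\Ac_1$ are not $T$-structured as full sequences (the $\SO$-type data only admit convergent \emph{subsequences} of the transformed copies), so the entire argument must be run on $T$-structured subsequences and then reassembled via the equivalences of Theorem \ref{TRich}.4; you invoke richness only as a slogan for the $\Pc$-compactness substitute, which is a different mechanism. Second, in the necessity direction one needs the inverse sequence $\{A_n^{-1}\}$ to again lie in $\Rc^T$ in order to take its snapshots; this is the inverse closedness of $\Rc^T$ in $\Fc$ (Theorem \ref{TRich}.1) and should be cited rather than assumed.
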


The class $\Ac_1$ of Theorem \ref{TIntro} covers finite section sequences of sums and products of multiplication operators and convolution operators, with highly discontinuous generating functions, including $L^{\infty}$ functions with limits at infinity for the multiplication operators and slowly oscillating, almost periodic and piecewise continuous functions for the convolution operators, which can be present together for the first time.\footnote{see Equation \eqref{EAc}.}
 
We believe that the ideas described here can be used with advantage over the traditional algebraic methods to study spline approximation methods, or approximation methods in many other spaces.

The paper is organized as follows: in the next section we introduce definitions and some basic results for the function and multiplier spaces. We use  in some cases simpler definitions than before, and discuss their relation with the classically defined spaces. We also review the previous results regarding the finite section method for convolution-type operators, and introduce the $\Pc$-framework, which is the basis of the new approach. We finish the section by discussing the classes of operators that can be included in the $\Pc$-framework.
In the third section, which is the core of the paper, we introduce the notion of rich sequences and develop the whole algebraic procedure until proving the main result in several versions, one of them 
even including the flip operator. Finally, in the last section, we compare the new results with the older ones and discuss the change of paradigm which results from the present approach.

\section{Some basic concepts}

We collect  in this section some  definitions, concepts  and base results that are needed in the latter part of the work and prove some new and highly interesting results on the classification of convolution operators. We also give very briefly some pointers to the history of the use of algebraic techniques in approximation methods.

\subsection{Function algebras}
Let's start with the definition of \textit{piecewise continuous}, \textit{slowly oscillating}, \textit{uniformly continuous} and \textit{almost periodic} functions. All function algebras here are considered as subalgebras of $L^{\infty}(\Rb)$. We represent the one-point compactification of the real line by $\dot{\Rb}$.
\begin{itemize}
\item For $\lambda\in\dot{\Rb}$ let $\PC^\lambda$ be set of all functions being 
continuous on $\dot{\Rb}\setminus\{\lambda\}$ and having finite one-sided limits 
at $\lambda$. Similarly, $\SO^\lambda$ is the set of all functions being 
continuous on $\dot{\Rb}\setminus\{\lambda\}$ and slowly oscillating at $\lambda$, i.e.
\begin{align*}
\lim_{x\to +0}\osc(f,\lambda+([-x,-rx]\cup [rx,x]))&=0 \quad\text{if}\quad\lambda\in\Rb\\
\lim_{x\to +\infty}\osc(f,[-x,-rx]\cup [rx,x])&=0 \quad\text{if}\quad\lambda=\infty
\end{align*} 
for every $r\in(0,1)$, where $\osc(f,I):=\esssup\{|f(t)-f(s)|:t,s\in I\}$. 
\item By $\PC$ we denote the smallest closed algebra including all $\PC^\lambda$,
$\lambda\in\dot{\Rb}$, and by $\SO$ we denote the smallest closed algebra including 
all $\SO^\lambda$, $\lambda\in\dot{\Rb}$.
\end{itemize}

The definitions of $\SO^\lambda$ and $\SO$ are the ones used in the works of Y. Karlovich 
(where $\SO$ is denoted there by $\SO^\diamond$, see for instance \cite{KaHe2013}).

The definition of $\PC$ is not the usual one, but it is chosen here for two reasons: 
Firstly it gives the definitions of $\PC$ and $\SO$ in a unified and consistent picture, 
and secondly it is much more convenient for the subsequent proofs. Actually, this 
characterization of $\PC$ is equivalent to the classical definition:
\begin{lem}
The algebra $\PC$ is exactly the algebra  of the functions with 
finite one-sided limits at each point $x\in\dot{\Rb}$.
\end{lem}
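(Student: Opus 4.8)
The plan is to prove the two inclusions separately. Write $A$ for the algebra of all functions possessing finite one-sided (essential) limits at every point of $\dot{\Rb}$; the goal is $\PC=A$. The inclusion $\PC\subseteq A$ is the soft direction. I would first check that $A$ is a closed subalgebra of $L^\infty(\Rb)$: it is closed under products because $L^\infty$-functions are bounded, so the one-sided limits of a product are the products of the respective one-sided limits; and it is norm-closed, since if $f_n\to f$ in $L^\infty$ and each $f_n$ has one-sided limits at a point $p$, then the numbers $f_n(p\pm 0)$ form Cauchy sequences (they are controlled by $\|f_n-f_m\|_\infty$) whose limits turn out to be the one-sided limits of $f$ at $p$. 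Since every $f\in\PC^\lambda$ is continuous off $\lambda$ — hence has coinciding one-sided limits at each point $\neq\lambda$ and, by hypothesis, finite one-sided limits at $\lambda$ — we have $\PC^\lambda\subseteq A$ for all $\lambda\in\dot{\Rb}$. As $\PC$ is by definition the smallest closed algebra containing all $\PC^\lambda$, it follows that $\PC\subseteq A$.

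For the reverse inclusion $A\subseteq\PC$ I would argue by approximation through step functions. The key analytic step is the classical fact about regulated functions: given $f\in A$ and $\varepsilon>0$, the existence of both one-sided limits at each point $p$ furnishes a punctured neighborhood of $p$ on which $f$ oscillates by less than $\varepsilon$; these neighborhoods cover the compact space $\dot{\Rb}$, so a finite subcover yields a partition of $\dot{\Rb}$ into finitely many intervals on each of which $\osc(f,\cdot)<\varepsilon$. Choosing a constant on each piece produces a step function $s$ with $\|f-s\|_{L^\infty}\le\varepsilon$. Thus step functions are dense in $A$, and since $\PC$ is closed it remains only to show that every step function lies in $\PC$.

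To place a step function $s$, with finitely many jump points $\lambda_1,\dots,\lambda_k\in\dot{\Rb}$, inside $\PC$, I would localize: choose a continuous partition of unity $\{\phi_i\}$ subordinate to small pairwise disjoint neighborhoods of the $\lambda_i$, with $\phi_i\equiv 1$ near $\lambda_i$. Then $g_i:=\phi_i s$ is continuous off $\lambda_i$ (on the support of $\phi_i$ the only discontinuity of $s$ is $\lambda_i$, and near the other jumps $\phi_i$ vanishes) and it inherits the finite one-sided limits of $s$ at $\lambda_i$, so $g_i\in\PC^{\lambda_i}\subseteq\PC$; since $\sum_i\phi_i\equiv 1$ we have $s=\sum_i g_i$, whence $s\in\PC$. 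Combining the two inclusions gives $\PC=A$. The main obstacle — and the only genuinely non-formal point — is the compactness/covering argument producing the finite step-function approximation; some care is also needed at the point $\infty$, where the two ``sides'' are the limits as $x\to\pm\infty$, and with the understanding that all limits and the norm are taken in the essential ($L^\infty$) sense, consistently with the definition of $\osc$.
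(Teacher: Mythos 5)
Your overall strategy is sound and in fact runs parallel to the paper's: both proofs handle the inclusion $\PC\subseteq A$ by inspecting the generators, and both prove $A\subseteq\PC$ by a compactness argument on $\dot{\Rb}$ that produces, for each $\varepsilon>0$, an approximant with only finitely many discontinuities, which is then placed in $\PC$ using closedness. The difference is in the approximant: the paper first isolates the finitely many points where the jump of $f$ exceeds $\varepsilon$ and builds a piecewise \emph{linear} interpolant $g$ with $\|f-g\|_\infty\le 2\varepsilon$, leaving implicit why a function with finitely many discontinuities lies in $\PC$; you approximate by \emph{step} functions and then supply exactly that missing detail via a partition of unity localizing each jump into some $\PC^{\lambda_i}$. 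That last step is a genuine (small) improvement in completeness over the paper's write-up.

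Two points in your text need repair, though neither threatens the architecture. First, the claim that the existence of both one-sided limits at $p$ gives a \emph{punctured neighborhood} of $p$ with $\osc(f,\cdot)<\varepsilon$ is false whenever $f$ has a jump of size $\ge\varepsilon$ at $p$: the oscillation over a two-sided punctured neighborhood dominates the jump. What is true, and what the regulated-function argument actually uses, is that each of the two one-sided punctured neighborhoods $(p-\delta,p)$ and $(p,p+\delta)$ separately has oscillation $<\varepsilon$; the finite subcover then yields a partition whose division points include the centers $p_i$, and the small oscillation holds on each resulting \emph{open} interval. Second, a continuous partition of unity subordinate to \emph{pairwise disjoint} small neighborhoods of $\lambda_1,\dots,\lambda_k$ cannot satisfy $\sum_i\phi_i\equiv1$, since the sum vanishes off the union of those neighborhoods. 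Either drop disjointness and choose $\phi_i$ with $\phi_i\equiv1$ near $\lambda_i$, $\phi_i\equiv0$ near each $\lambda_j$ ($j\neq i$), overlapping only away from all jump points so that $\sum_i\phi_i\equiv1$; or keep the disjoint supports and add the bulk term $\phi_0:=1-\sum_{i\ge1}\phi_i$, noting that $\phi_0 s$ vanishes near every discontinuity of $s$, hence lies in $C(\dot{\Rb})\subseteq\PC^\lambda$ for any $\lambda$. With either fix each summand lands in some $\PC^{\lambda}$ and the conclusion $s\in\PC$ stands.
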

\begin{proof}
Since the generators of $\PC$ have finite one-sided limits at each point,
the inclusion ``$\subset$''
is obvious. Conversely, let $f$ be piecewise continuous in this latter sense. For any prescribed 
$\epsilon>0$ the set of points $x\in\dot{\Rb}$ with $|f(x+)-f(x-)| > \epsilon$ is 
finite (Otherwise, if it was infinite, it would have an accumulation point in the
(compact) set $\dot{\Rb}$, at which the one-sided limits cannot exist.) For each 
closed interval $I$ between such two points we proceed as follows: For each 
interior point $x\in I$ define $y_x:=(f(x-)+f(x+))/2$ and choose an open neighborhood of 
$x$ such that $\|f-y_x\|<\epsilon$ on this neighborhood. At the endpoints of $I$ do this 
for $y_x$ being the respective one-sided limit. These open sets cover $I$. 
By compactness we can choose a finite subcovering, and finally let $g_I$ be the 
linear interpolation of all the respective $y_{x_i}$ on $I$ with the respective 
one-sided limits of $f$ at the boundary of $I$. Combining these $g_I$ to one 
function $g$ over $\dot{\Rb}$ we arrive at $\|f-g\|_\infty \leq 2\epsilon$. 
Since $g$ has only finitely many discontinuities, $\epsilon$ is arbitrary and
$\PC$ is closed, we are done.
\end{proof}

By $L^\infty_0$ we abbreviate the set of all functions $a\in L^\infty(\Rb)$ with
\[a(\pm\infty)=\lim_{x\to\pm\infty}a(x)=0.\]
Finally, denote by $\BUC$ the set (actually the Banach algebra) of all bounded and uniformly
continuous functions on $\Rb$, by $C(\overline{\Rb})$ the subalgebra of all continuous
functions $a$ with finite limits $a(-\infty)$ and $a(+\infty)$, and by $C(\dot{\Rb})$ 
the subalgebra of those continuous functions with finite limit $a(-\infty)=a(+\infty)$
at infinity. $\APer$ shall stand for the smallest closed subalgebra which contains all
functions $x\mapsto e^{\ii\lambda x}$, $\lambda\in\Rb$. Its elements are called 
almost periodic functions. Clearly, $C(\dot{\Rb})\subset C(\overline{\Rb})\subset\BUC$
and $\APer\subset\BUC$ as well as $\SO^\infty\subset\BUC$.

\subsection{Multiplier algebras}
Let $F:L^2(\Rb)\to L^2(\Rb)$ denote the Fourier transform
\[(Fu)(x):=\int_\Rb u(t)e^{\ii tx} dt,\quad x\in\Rb,\]
and $F^{-1}:L^2(\Rb)\to L^2(\Rb)$ the inverse of $F$.
A function $a\in L^\infty(\Rb)$ is called a Fourier multiplier on $L^p(\Rb)$, 
$1<p<\infty$,\footnote{The cases $p\in\{1,\infty\}$ have already been completely settled in
\cite{MaSaSe2014}. Therefore we can focus on $1<p<\infty$ in the present paper.} 
if the operator 
\[(W^0(a)u)(x):=(F^{-1}aFu)(x),\quad u\in L^2(\Rb)\cap L^p(\Rb),\]
acts on the dense subset $L^2(\Rb)\cap L^p(\Rb)$ such that 
$\|W^0(a)u\|_{L^p(\Rb)}\leq c_p\|u\|_{L^p(\Rb)}$
with some constant $c_p$ independent of $u$. Then $W^0(a)$ extends to a bounded 
linear operator on $L^p(\Rb)$. This extension will again be denoted by $W^0(a)$ 
and is referred to as the Fourier convolution operator with the symbol function $a$. 

Notice that all convolution operators are \textit{shift invariant}, i.e.
 $V_sW^0(b)V_{-s}=W^0(b)$ for all $s\in\Rb$, with the so-called shift operators
$V_s$ defined by
\[V_s:L^p(\Rb)\to L^p(\Rb), \; f(x)\mapsto f(x-s).\]
By a theorem of H\"ormander (\cite{Hormander1960}), also the converse is true: 
every shift invariant operator $A\in\lb{L^p(\Rb)}$ is a Fourier convolution operator.

The set of all multipliers on $L^p(\Rb)$
\[M^p:= \{a\in L^\infty(\Rb): W^0(a)\in \lb{L^p(\Rb)}\}\]
 is known to be a Banach algebra with the norm
$\|a\|_{M^p}:=\|W^0(a)\|_{\lb{L^p(\Rb)}}$, and particularly includes 
$x\mapsto -\sgn(x)$ for every $1<p<\infty$, the symbol of the Cauchy singular 
integral operator $S:L^p(\Rb)\to L^p(\Rb)$
\[(Su)(x):=\frac{1}{\pi\ii}\int_\Rb\frac{u(y)}{y-x}dy,\quad x\in\Rb.\] 

Let $p\in(1,\infty)\setminus\{2\}$. By $M^{<p>}$ we denote the set of all 
multipliers $b\in M^p$ for which there exists a $\delta>0$ (depending on $b$)
such that $b\in M^r$ for all $r\in (p-\delta,p+\delta)$. 
Also set $M^{<2>}:=M^2=L^\infty(\Rb)$. 
Furthermore, for a subalgebra $\Bc\subset L^\infty(\Rb)$
let $\Bc_p$ denote the closure in $M^p$ of $\Bc\cap M^{<p>}$.

\medskip
This yields e.g. the algebra $\PC_p$ of piecewise continuous (which particularly contains 
$-\sgn$, the symbol of the Cauchy singular integral operator) and the algebra $\SO_p$ 
of slowly oscillating multipliers (which includes the algebra $\SO_p^\diamond$ defined 
in \cite{KaHe2013}).
Actually, we are going to attack a larger algebra within this paper: Let 
$[\PC,\SO,\BUC]$ stand for the smallest closed algebra which includes \textit{all} 
$\PC$-, $\SO$-, and $\BUC$-functions, (hence also all $\APer$-functions),  we are then interested in all multipliers belonging to $[\PC,\SO,\BUC]_p$. 

Notice that the definitions of these algebras are intuitive, simple and short, and
one might ask whether they are equivalent to the classical approach and the more
technical definitions in the literature. 
We have the following result, whose proof is trivial.
\begin{lem}\label{LPCp}
The algebra $\PC_p$ includes the smallest closed subalgebra of $M^p$ that 
is generated by all $C(\dot{\Rb})_p$-functions and all $\chi_{[\lambda,\infty)}$, 
$\lambda\in\Rb$. 
Moreover, it includes the smallest closed subalgebra of $M^p$ that contains all
piecewise constant functions having only finitely many discontinuities.

Also, $\PC_p\supset \PC_p^\lambda$ and $\SO_p\supset\SO_p^\lambda$ for every $\lambda\in\Rb$.
\end{lem}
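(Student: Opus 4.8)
The plan is to exploit that $\PC_p$ is, by construction, a closed subalgebra of $M^p$, so that each of the three assertions reduces to checking that the relevant generators already lie in $\PC_p$. First I would record that $\PC_p$ really is a closed subalgebra: since $M^p$ is a Banach algebra under pointwise multiplication of symbols and both $\PC$ and $M^{<p>}$ are subalgebras of $L^\infty(\Rb)$ (the latter because membership in $M^r$ on a common neighborhood of $p$ is preserved under sums and products), their intersection $\PC\cap M^{<p>}$ is an algebra, and its $M^p$-closure $\PC_p$ is therefore a closed subalgebra.

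For the first claim I would verify two inclusions. On the one hand, $C(\dot{\Rb})$ consists of functions continuous on all of $\dot{\Rb}$, hence with equal finite one-sided limits at every point, so $C(\dot{\Rb})\subset\PC^\lambda\subset\PC$ for any $\lambda$; intersecting with $M^{<p>}$ and taking $M^p$-closures gives $C(\dot{\Rb})_p\subset\PC_p$. On the other hand, each $\chi_{[\lambda,\infty)}$ has finite one-sided limits at $\lambda$ and at $\infty$ and is continuous elsewhere, so by the preceding lemma characterizing $\PC$ it belongs to $\PC$. Writing $\chi_{[\lambda,\infty)}=\tfrac12(1+\sgn(\cdot-\lambda))$ and using that $-\sgn\in M^r$ for every $1<r<\infty$ together with the invariance of the multiplier norm under translation of the symbol (conjugation by a modulation operator, an $L^r$-isometry), I obtain $\chi_{[\lambda,\infty)}\in M^r$ for all such $r$, i.e. $\chi_{[\lambda,\infty)}\in M^{<p>}$. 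Hence $\chi_{[\lambda,\infty)}\in\PC\cap M^{<p>}\subset\PC_p$. Since $\PC_p$ is a closed subalgebra containing all these generators, it contains the smallest closed subalgebra they generate.

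The second claim follows in the same spirit: any piecewise constant function with only finitely many jumps is a finite linear combination of constants and indicators $\chi_{[a,b)}=\chi_{[a,\infty)}-\chi_{[b,\infty)}$, all of which lie in $\PC\cap M^{<p>}$ by the previous paragraph; such a function has finite one-sided limits everywhere (including at $\infty$, where it is eventually constant in each direction), so it too lies in $\PC_p$, and the closed-subalgebra property finishes this part. The final assertion is purely definitional: $\PC^\lambda\subset\PC$ and $\SO^\lambda\subset\SO$ hold by the very definitions of $\PC$ and $\SO$ as the smallest closed algebras containing all the $\PC^\mu$ respectively $\SO^\mu$; intersecting with $M^{<p>}$ and closing in $M^p$ then yields $\PC_p^\lambda\subset\PC_p$ and $\SO_p^\lambda\subset\SO_p$.

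The only step carrying genuine content — everything else being bookkeeping with the definitions and an appeal to the preceding lemma — is the membership $\chi_{[\lambda,\infty)}\in M^{<p>}$, which I expect to be the main (though still minor) obstacle; it rests entirely on the already-quoted fact that the Cauchy symbol $-\sgn$ is a Fourier multiplier for every exponent $1<r<\infty$, and on the translation invariance of the $M^r$-norm.
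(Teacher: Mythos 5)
Your proof is correct; the paper in fact omits the argument entirely, introducing the lemma with ``whose proof is trivial'', and your write-up is precisely the routine verification being alluded to. The one point with actual content --- that $\chi_{[\lambda,\infty)}\in M^{<p>}$, via $\chi_{[\lambda,\infty)}=\tfrac12(1+\sgn(\cdot-\lambda))$, the fact (recorded in the paper) that $-\sgn\in M^r$ for every $1<r<\infty$, and the translation invariance of the $M^r$-norm under conjugation by the isometries $U_{\pm\lambda}$ --- is handled correctly, as is the appeal to the preceding lemma to place $\chi_{[\lambda,\infty)}$ in $\PC$ despite its jump at $\infty$.
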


\begin{rem}
These new definitions suggest the following  open questions:
\begin{enumerate}
\item whether the collection of all $\PC_p^\lambda$ functions is sufficient to generate $\PC_p$,
\item the same question for $\SO_p^\lambda$ and $\SO_p$,
\item whether  $C(\dot{\Rb})_p$ is a proper subset of $C(\dot{\Rb})\cap M^p$,
\item the same questions for $\SO_p$ and $\PC_p$,
\item whether $\SO_p$ is larger than the algebra $\SO_p^\diamond$ defined in \cite{KaHe2013}.
\end{enumerate}
\end{rem}

Anyway, the more elegant definitions in the present paper cover 
(and probably generalize, depending on the answers to the questions above)
all previous ones.

\subsection{The Finite Section Method in algebras of convolution operators}

Let $\Pc:=(P_n)$ be the sequence of projections $P_n=\chi_{[-n,n]}I$ on the space
$L^p(\Rb)$ and consider the 
\textit{Finite Section Method}
\begin{equation} \label{FSM}
P_{n}AP_n u_n = P_n v \quad \text{for} \quad n = 1, 2, \ldots
\end{equation}
for an operator $A$.  

The applicability and properties of the FSM described above, and its discrete analogue in sequence spaces caught the interest of a large number of researchers that have obtained and extended results in various directions since at least the 1960s up to the present: various types of symbols and  operators \cite{B1984,DidSi2002,KMS2010,Li2006,RSS2010a,2RoS1988,Si1981}, multidimensional case \cite{B19842,BS1983,HRS1994,Ko1973b,MS2005},  spline methods \cite{DidSi2002,HRS1994,JMS2010,San2012}, Toeplitz, Hankel and other operators \cite{RRS2001,RoSa2014a,RSS1997}, 
asymptotics, behavior of eigenvalues, approximation numbers and numerical ranges \cite{BaEh2001,BSW1994,ERS2011,Ro2000, RogSi2006,Wid1974}, structure of the approximation algebras \cite{BaToSi2015,DPSS2014,SS2006}, are just some examples. The above references are far from being exhaustive, see also the monographs \cite{BS1999,BS1989/90,HRS2001,PS1991,RRS2004,RSS2010book} and the references cited therein.

Gohberg and Feldman \cite{GohFel1974} were the first to obtain conditions for the applicability of the method for some classes of continuous symbol convolution operators on $L^{p}(\Rb)$, and Kozak \cite{Ko1973} introduced the algebraic approach.
Another important early milestone for tackling piecewise continuous generating functions was Silbermann's paper \cite{Si1981} where he found a compact-like ideal of sequences and a corresponding snapshot, so that the stability problem could be decomposed into the invertibility of the snapshot and invertibility of the coset in the quotient algebra. That idea has been used ever since, usually complemented by localization techniques to find invertibility conditions for the coset.

When $P_n$ is strongly convergent most proofs make use of the close relationship between strong convergence and the ideal of compact operators. In this context, the observation that the multiplication of a strongly convergent sequence by a compact operator gives uniform convergence plays a fundamental role. For non-strongly convergent projections, as it is the case for the finite section projections in $l^{\infty}(\Zb)$ or $L^{\infty}(\Rb)$, S. Roch and B. Silbermann \cite{RoSi1989} (see also \cite[4.36 et seq.]{PS1991})  initiated the idea of changing the above mentioned connection between compactness and strong convergence to a definition. Starting with the projection sequence $\Pc := (P_n)_{n \in \Nb}$, one substitutes the usual compact operators by compact-like operators related to that sequence. The $\Pc$-compact operators will be those operators $K \in \Lc(X)$ such that $\|KP_n - K\|$ and $\|P_nK - K\|$ tend to zero as $n \to \infty$. 

This new approach with the adapted ideal of $\Pc$-compact operators happens to also be useful in the cases $1<p<\infty$, and is at the heart of the results we present. In  \cite{MaSaSe2014} the authors managed to use the new approach to get, besides the complete solution for the cases $p \in \{1, \infty\}$, the solution for the so-called quasi-banded operators, which include all convolutions with generating function continuous at all finite points of $\Rb$, but do not include the general piecewise continuous case. The present work purposes to bring  the $\Pc$-framework together with localization techniques in order to cover for the first time quasi-banded operators and operators of convolution with piecewise continuous generating functions.

\subsection{The $\Pc$-framework and Fredholm property}

Let $\Xb$ be a Banach space and $\Pc=(P_n)_{n\in\Nb}$ a sequence of projections 
$P_n\in\lb{\Xb}$ such that
\begin{enumerate}
\item $P_nP_{n+1}=P_{n+1}P_n=P_n$ for all $n\in\Nb$,
\item $\lim_{n\to\infty}\|P_nx\|=\|x\|$ for each $x\in\Xb$,
\item $\|\sum_{i\in U} (P_{i}-P_{i-1})\|=1$ for every finite subset $U\subset\Nb$.
\end{enumerate}
Although these conditions seem to be somewhat technical at a first glance, they actually
have a very natural meaning: the projections are nested (1.), cover the whole space (2.),	
and provide a uniformly bounded partition (3.), in a sense. Therefore $\Pc$ is said
to be a \textit{uniform approximate identity}. For a rigorous treatment (in an even
more relaxed setup) we refer to \cite{RRS2004, SeSi2012}.
In fact, recall from \cite{RRS2004} that the set
\[\pc{\Xb}:=\{K\in\lb{\Xb}: \|K(I-P_n)\|,\|(I-P_n)K\|\to 0\text{ as }n\to\infty\}\]
of all \textit{$\Pc$-compact operators} is a Banach algebra. Further we
denote the set of all operators which are compatible with $\pc{\Xb}$ by
\[\pb{\Xb}:=\{A\in\lb{\Xb}: AK,KA\in\pc{\Xb} \text{ for every }K\in\pc{\Xb}\}.\]
It is known that $\pb{\Xb}$ is a Banach algebra as well, and $\pc{\Xb}$ forms
a closed two-sided ideal in $\pb{\Xb}$.

\medskip
An operator $A\in\pb{\Xb}$ is said to be \textit{$\Pc$-Fredholm} if there exists a 
$\Pc$-regularizer $B\in\lb{\Xb}$, that is,  $AB-I, BA-I \in\pc{\Xb}$. These $\Pc$-regularizers belong to $\pb{\Xb}$ again (\cite[Theorem 1.16]{SeSi2012}). Therefore one may also equivalently define
that $A$ is $\Pc$-Fredholm if $A+\pc{\Xb}$ is invertible in the quotient algebra
$\pb{\Xb}/\pc{\Xb}$. Of course, the usual Fredholm property has to 
dovetail with this modified setting:
\begin{itemize}
\item $A\in\pb{\Xb}$ is Fredholm iff there exist projections
	 			$P,P' \in \pc{\Xb}$ of finite rank such that 
				$\im P = \ker A$ and $\ker P' = \im A$. In that case $A$ is $\Pc$-Fredholm.
\item $A\in\pb{\Xb}$ is not Fredholm iff for every $\epsilon>0$ and
				every $l\in\Nb$ there exists a projection $Q\in\pc{\Xb}$ with $\rk Q \geq l$ 
				such that $\|AQ\|<\epsilon$ or $\|QA\|<\epsilon.$ 
\end{itemize}
If the above holds then  $\Pc$ is said to equip $\Xb$ with the $\Pc$-dichotomy, and actually 
this is shown to be true in the cases one is usually interested in, such as $\Xb$ being 
a Hilbert space, $\Xb=L^p(\Rb^N)$ or $\Xb=l^p(\Zb^N)$ with $p\in[1,\infty]$, etc.
(see \cite[Sections 1.2, 3.3.1 and Proposition 1.27]{SeSi2012}).

\medskip
We say that a sequence $(A_n)\subset\pb{\Xb}$ \textit{converges $\Pc$-strongly} to 
$A\in\lb{\Xb}$ if 
\[\|K(A_n-A)\| + \|(A_n-A)K\| \to 0 \quad\text{as}\quad n\to\infty\]
holds for every $K\in\pc{\Xb}$.
In that case $(A_n)$ is automatically bounded, its $\Pc$-strong limit
$A=\plim A_n$ is automatically contained in $\pb{\Xb}$ and
\begin{equation}\label{EPSLim}
\|A\|\leq\liminf \|A_n\|.
\end{equation}

These $\Pc$-substitutes for the classical triple (compactness, Fredholmness, strong convergence) provide us with a ``universe'' which mimics
the classical world in large parts. The advantage is that this new framework has a simple consistent and symmetric algebraic structure, it is more flexible and it is better adapted to the setting $\Xb$ with the sequence of the projections $P_n$. 

In the next section, we return to our concrete setting $\Xb=L^p(\Rb)$ with the sequence 
of the (non-compact) canonical projections $P_n$. Notice that this approach already 
served as a key instrument in \cite{MaSaSe2014} for the treatment of the finite sections 
of arbitrary convolution and convolution type operators operators on the spaces 
$L^1(\Rb)$ and $L^\infty(\Rb)$, where the situation with the classical approach is even 
worse: the $P_n$ do not converge $*$-strongly there.

\subsection{Classes of convolution operators in $\pb{\Xb}$}

Consider $\Xb=L^p(\Rb)$, $1<p<\infty$, with $\Pc=(P_n)=(\chi_{[-n,n]}I)$ the sequence 
of the canonical projections and abbreviate the complementary projections by
$Q_n:=I-P_n$. 

Notice that $\lc{\Xb}\subset\pc{\Xb}\subset\pb{\Xb}\subset\lb{\Xb}$ and all 
inclusions are proper (see e.g. \cite{Li2006}).
\begin{defn}
An operator $A\in\lb{\Xb}$ is called band operator with band-width $w$ if 
\[Q_{1+w}V_sAV_{-s}P_1=P_1V_sAV_{-s}Q_{1+w}=0\quad\text{for every $s\in\Rb$.}\] 
The elements in the smallest closed algebra which contains all band operators
are called band-dominated operators.
Moreover, $A\in\lb{\Xb}$ is said to be quasi-banded if 
\begin{equation}\label{EDefQB}
\lim_{m\to\infty}\sup_{n>0}\|Q_{n+m}AP_n\| =
\lim_{m\to\infty}\sup_{n>0}\|P_n AQ_{n+m}\| = 0.
\end{equation}
\end{defn}
Almost obviously, band operators are band-dominated, and band-dominated ones 
are quasi-banded. Moreover, one can show \cite{MaSaSe2014, RRS2004, Li2006} 
that both, the set of band-dominated operators and the set of 
quasi-banded operators are closed and inverse closed subalgebras of
$\pb{\Xb}$ which contain $\pc{\Xb}$ as a closed ideal. If $A$ from one of these 
algebras is $\Pc$-Fredholm, then its $\Pc$-regularizers are in the same algebra 
again.

\begin{rem}
Roughly speaking these definitions determine to what extend the operators are able to
transport information along the axis.
The definition of band operators is primarily motivated by the analogous 
$l^p$-setting, where one has a natural interpretation of bounded linear operators 
as infinite matrices.
Then the definition just means that band operators have only a finite number of
non-zero diagonals. 
\end{rem}

Besides, there are equivalent characterizations which appear to be very natural also 
in the $L^p$-setting. For this, define the commutator
$[A,B] := AB - BA$ of two operators $A, B$, and moreover, for every function $\varphi$
over $\Rb$ and $t>0$ introduce the inflated copy $\varphi_t$ by $\varphi_t(x)=\varphi(x/t)$. 
\begin{prop}\label{PBDOComm}(\cite[Theorem 2.1.6]{RRS2004} and \cite[Theorem 1.42]{Li2006}) 
$A\in\lb{\Xb}$ is band-dominated if and only if
\begin{equation*}
\|[A,\varphi_tI]\|\to0\quad\text{as}\quad t\to\infty\quad\text{for every function}
\quad\varphi\in\BUC. 
\end{equation*} 
Surprisingly this is also equivalent to 
\[\|[A,\varphi_tI]\|\to0\quad\text{as}\quad t\to\infty\quad\text{for the particular functions }
\varphi(x):=e^{\pm\ii x}.\] 
\end{prop}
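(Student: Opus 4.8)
The plan is to prove the equivalence as a cycle of implications: $A$ band-dominated implies the commutator condition for every $\varphi\in\BUC$, which trivially implies it for the two functions $e^{\pm\ii x}$, which in turn implies that $A$ is band-dominated. Only the first and last implications carry content. Throughout I abbreviate by $E_\xi$ the isometric multiplication operator $f(x)\mapsto e^{\ii\xi x}f(x)$ on $\Xb=L^p(\Rb)$; then $\varphi(x)=e^{\pm\ii x}$ gives $\varphi_tI=E_{\pm1/t}$, so that the condition for these particular functions reads exactly $\|[A,E_\xi]\|\to0$ as $\xi\to0$, the two signs $e^{\pm\ii x}$ accounting for $\xi\to0$ from each side.

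For the first implication I would reduce to a single band operator. Because band operators form an algebra and are therefore dense in the band-dominated ones, and since $\|[A,\varphi_tI]\|\le 2\|\varphi\|_\infty\,\|A-B\|+\|[B,\varphi_tI]\|$ for every band operator $B$, it suffices to show $\|[B,\varphi_tI]\|\to0$ for a fixed band operator $B$ of band-width $w$. The idea is that $[B,\varphi_tI]$ only feels the values of $\varphi_t$ across a physical window of width of order $w$, on which $\varphi_t(x)=\varphi(x/t)$ varies by at most the modulus of continuity of $\varphi$ evaluated at a quantity of order $w/t$, which tends to $0$ by uniform continuity of $\varphi\in\BUC$. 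To turn this into an operator-norm bound I would use the defining relations $Q_{1+w}V_sBV_{-s}P_1=P_1V_sBV_{-s}Q_{1+w}=0$ together with the partition $\Delta_i:=P_i-P_{i-1}$ furnished by $\Pc$: since $\varphi_tI$ commutes with every (multiplication) projection, $[B,\varphi_tI]=\sum_{i,j}\Delta_i[B,\varphi_tI]\Delta_j$, only the blocks with $|i-j|$ bounded in terms of $w$ survive, and on each such block $\varphi_tI$ may be replaced by a constant up to an error bounded by the variation of $\varphi_t$ across the block, so that $\Delta_i[B,\varphi_tI]\Delta_j$ is correspondingly small; a Schur-type estimate over the uniformly bounded partition then controls the full sum.

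For the decisive implication I would construct explicit band approximants to $A$ by averaging modulations. Fix $g\in C_c^\infty(\Rb)$ with $g(0)=1$ and $\supp g\subset[-1,1]$, put $\phi:=F^{-1}g$ (a Schwartz function, in particular $\phi\in L^1$), and for $R>0$ set $g_R:=g(\cdot/R)$, $\phi_R:=F^{-1}g_R=R\,\phi(R\,\cdot)$, and
\begin{equation*}
A_Rf:=\int_\Rb\phi_R(\xi)\,E_\xi AE_{-\xi}f\,d\xi\qquad(f\in\Xb),
\end{equation*}
a Bochner integral in $\Xb$ that is well defined since $\|E_\xi AE_{-\xi}f\|\le\|A\|\,\|f\|$ and $\phi_R\in L^1$. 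Conjugation by $E_\xi$ multiplies the formal kernel of $A$ by $e^{\ii\xi(x-y)}$, so integrating against $\phi_R$ multiplies it by $(F\phi_R)(x-y)=g_R(x-y)$; as $g_R$ is supported in $|x-y|\le R$, the operator $A_R$ is a band operator of width $R$. Since $\int\phi_R=g_R(0)=1$, one has $A-A_R=\int\phi_R(\xi)(A-E_\xi AE_{-\xi})\,d\xi$, and writing $A-E_\xi AE_{-\xi}=E_\xi[E_{-\xi},A]$ with $\|E_\xi\|=1$ and substituting $\eta=R\xi$ gives
\begin{equation*}
\|A-A_R\|\le\int_\Rb|\phi(\eta)|\,\|[A,E_{-\eta/R}]\|\,d\eta.
\end{equation*}
For each fixed $\eta$ the integrand tends to $0$ as $R\to\infty$ by the hypothesis — this is exactly where both signs $e^{\pm\ii x}$ enter, as $\eta$ runs over both half-lines — and it is dominated by the integrable function $2\|A\|\,|\phi(\eta)|$; dominated convergence then yields $\|A-A_R\|\to0$, exhibiting $A$ as a norm limit of band operators.

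The step needing the most care — and the main obstacle to a fully self-contained argument — is the claim that $A_R$ genuinely satisfies the $V_s$-definition of a band operator, since operators on $L^p(\Rb)$ need not possess honest integral kernels and the kernel computation above is only formal. I would instead verify directly that $\chi_{E_2}A_R\chi_{E_1}=0$ whenever $\mathrm{dist}(E_1,E_2)>R$: by the support of $g_R$ the modulation average $\int\phi_R(\xi)E_\xi(\chi_{E_2}A\chi_{E_1})E_{-\xi}\,d\xi$ must annihilate functions living on sets farther apart than $R$, and translating this vanishing into the language of the $V_s$-relations (rather than kernels) is the technical heart of the matter. Granting that, the remaining ingredients are routine: a density argument for the first implication and dominated convergence for the last, the conceptual content being the modulation/convolution duality that converts ``$A$ nearly commutes with the slow modulations $E_{\pm1/t}$'' into ``$A$ is approximable by finite-width band operators''.
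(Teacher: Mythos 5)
The paper offers no proof of this proposition at all --- it is imported verbatim from the two cited references --- so there is nothing in-paper to compare against; measured against the arguments in those references, your reconstruction follows essentially the same classical route (density of band operators plus a near-diagonal block estimate for the easy direction; mollification by modulations, $A_R=\int\phi_R(\xi)E_\xi AE_{-\xi}\,d\xi$ with $F\phi_R=g_R$ compactly supported, plus dominated convergence for the decisive one), and both halves are sound in strategy. Two points need repair or completion. First, in the easy direction your blocks $\Delta_i=P_i-P_{i-1}$ are multiplications by the \emph{symmetric} sets $[-i,-i+1]\cup[i-1,i]$, so ``the variation of $\varphi_t$ across the block'' is not small for large $i$ (take $\varphi$ odd, say): you must first split each $\Delta_i$ into its two one-sided halves. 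The band condition then kills every cross term between the two half-lines except for the finitely many pairs with $i+j\le w+2$, and each surviving one-sided block has its support inside an interval of length $O(w)$, over which $\osc(\varphi_t)\le\osc(\varphi,\cdot)$ on a set of diameter $O(w/t)\to0$; only then does the Schur-type summation over the $O(w)$ surviving diagonals close the argument. Second, the step you flag yourself --- that $A_R$ is genuinely banded when $A$ has no honest kernel --- is a real obligation but is fillable by elementary means, without the Schwartz kernel theorem: for $f$ supported in $E_1$ and $h$ in the dual space supported in $E_2$ with $\operatorname{dist}(E_1,E_2)>R$, approximate $f$ and $h$ by step functions over partitions of mesh $\delta$ with sample points $y_I\in I$, $x_J\in J$; the replacement of $e^{-\ii\xi y}\chi_I(y)$ by $e^{-\ii\xi y_I}\chi_I(y)$ costs at most $\min(2,|\xi|\delta)$ per factor, and $\int|\phi_R(\xi)|\min(2,|\xi|\delta)\,d\xi\to0$ as $\delta\to0$ for fixed $R$, while the discretized pairing becomes a finite sum of terms $\langle\chi_J,A\chi_I\rangle\,g_R(x_J-y_I)=0$ because $|x_J-y_I|>R$. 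With these two repairs your proof is complete.
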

For convolutions (i.e. shift-invariant operators, by H\"ormanders result) this means
\begin{cor}\label{CQBOComm}
$W^0(b)$ is band-dominated if and only if 
\begin{equation}\label{EBDOComm}
\|[W^0(b),\varphi_tI]\|\to0\quad\text{as}\quad t\to\infty\quad\text{for every function}
\quad\varphi\in\BUC.
\end{equation} 
In this case $b\in\BUC$.
On the other hand, if $b\in\BUC_p$ then $W^0(b)$ is band-dominated.
\end{cor}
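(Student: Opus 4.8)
The stated equivalence between band-dominatedness of $W^0(b)$ and \eqref{EBDOComm} is nothing but Proposition \ref{PBDOComm} applied to the particular operator $A=W^0(b)$, so no separate work is required there. The whole content lies in the two remaining assertions: that band-dominatedness forces $b\in\BUC$, and that $b\in\BUC_p$ conversely forces band-dominatedness. Both rest on a single computation of the relevant commutators. The plan is to evaluate $[W^0(b),\varphi I]$ when $\varphi$ is one of the two exponentials singled out by Proposition \ref{PBDOComm}, namely $\varphi(x)=e^{\pm\ii x}$, and to recognize the outcome as a \emph{translate} of the symbol.

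First I would carry out this computation. Writing $e_\lambda$ for the operator of multiplication by $x\mapsto e^{\ii\lambda x}$ and using that, on the Fourier side, multiplication by $e_\lambda$ amounts to translating the symbol by $\lambda$ (equivalently, invoking shift invariance of convolution operators via H\"ormander), one gets the intertwining relation $e_\lambda W^0(b)=W^0\!\big(b(\cdot+\lambda)\big)\,e_\lambda$. Consequently $[W^0(b),e_\lambda I]=W^0\!\big(b-b(\cdot+\lambda)\big)\,e_\lambda$, and since $e_\lambda$ is an invertible isometry on $L^p(\Rb)$ this yields the clean identity $\|[W^0(b),e_\lambda I]\|=\|b-b(\cdot+\lambda)\|_{M^p}$. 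Taking $\varphi(x)=e^{\pm\ii x}$ gives $\varphi_t=e_{\pm1/t}$, so condition \eqref{EBDOComm} for these two functions says precisely that $\|b-b(\cdot\pm1/t)\|_{M^p}\to0$ as $t\to\infty$.

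For the forward direction ($W^0(b)$ band-dominated $\Rightarrow b\in\BUC$), the key observation is that the parameter $t$ runs continuously to $+\infty$, so $\{\pm1/t:t>0\}$ exhausts a full punctured neighbourhood of $0$; hence the limit above is equivalent to $\|b-b(\cdot+\lambda)\|_{M^p}\to0$ as $\lambda\to0$, i.e.\ $\lambda\mapsto b(\cdot+\lambda)$ is norm-continuous into $M^p$. Since the multiplier norm dominates the sup norm, $\|\cdot\|_{L^\infty}\le\|\cdot\|_{M^p}$ (because $M^p\subset M^2=L^\infty(\Rb)$, using self-duality and interpolation of the $M^p$-norm), the translates of $b$ vary continuously in $L^\infty(\Rb)$, and a standard approximate-identity argument then shows that $b$ coincides almost everywhere with a bounded uniformly continuous function, i.e.\ $b\in\BUC$.

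For the converse ($b\in\BUC_p\Rightarrow$ band-dominated) I would first reduce to the generators. The band-dominated operators form a closed subalgebra of $\lb{L^p(\Rb)}$ and $b\mapsto W^0(b)$ is isometric, so $\{b\in M^p:W^0(b)\text{ is band-dominated}\}$ is closed in $M^p$; as $\BUC_p$ is by definition the closure of $\BUC\cap M^{<p>}$, it suffices to treat $b\in\BUC\cap M^{<p>}$. Here lies the main obstacle: uniform continuity of $b$ controls only the weak $M^2=L^\infty(\Rb)$ norm of $b-b(\cdot+\lambda)$, whereas we need the stronger $M^p$ norm to tend to $0$. This is exactly what the extra regularity $b\in M^{<p>}$ is for: choosing $r$ with $b\in M^r$ on the side of $p$ opposite to $2$, complex interpolation of multiplier norms gives $\|b-b(\cdot+\lambda)\|_{M^p}\le\|b-b(\cdot+\lambda)\|_{L^\infty}^{1-\theta}\,\|b-b(\cdot+\lambda)\|_{M^r}^{\theta}$ with some $\theta\in(0,1)$. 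The first factor tends to $0$ by uniform continuity of $b$, while the second stays bounded by $(2\|b\|_{M^r})^{\theta}$; hence $\|b-b(\cdot+\lambda)\|_{M^p}\to0$. By the computation above this is \eqref{EBDOComm} for $\varphi(x)=e^{\pm\ii x}$, and Proposition \ref{PBDOComm} then yields that $W^0(b)$ is band-dominated. Closedness finally propagates this to all of $\BUC_p$.
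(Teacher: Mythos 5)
Your proof is correct, and its core is the same as the paper's: the commutator of $W^0(b)$ with the exponential $e^{\pm\ii x/t}$ is, up to an isometric factor, the convolution by $b-b(\cdot\pm 1/t)$, and Riesz--Thorin interpolation bridges the gap between $p=2$ and general $p$. But you organize the two non-trivial assertions differently, and in one place more cleanly. The paper only establishes the commutator identity on $L^2$ (where it reads $\|[bI,V_{\pm1/t}]\|=\|b-b(\cdot\mp1/t)\|_\infty$ after conjugating by $F$) and then, for the forward direction with $p\neq2$, asserts without detail that band-dominatedness on $L^p$ transfers to $L^2$; you instead keep the identity $\|[W^0(b),e_\lambda I]\|=\|b-b(\cdot+\lambda)\|_{M^p}$ on $L^p$ itself and conclude $b\in\BUC$ directly from the elementary estimate $\|\cdot\|_\infty\leq\|\cdot\|_{M^p}$, which removes the need for that transfer step. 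For the converse, the paper interpolates the operator norm of the commutator $[W^0(b),\varphi_tI]$ for an arbitrary $\varphi\in\BUC$, whereas you interpolate the multiplier norm of $b-b(\cdot+\lambda)$ and then invoke only the ``exponentials suffice'' half of Proposition \ref{PBDOComm}; the two are equivalent in substance, but your version isolates more sharply where the hypothesis $b\in M^{<p>}$ enters (boundedness of $\|b-b(\cdot+\lambda)\|_{M^r}$ on the far side of $p$ from $2$). All the auxiliary facts you use --- the intertwining $e_\lambda W^0(b)=W^0(b(\cdot+\lambda))e_\lambda$, translation invariance of the $M^r$-norm, closedness of $\{b:W^0(b)\text{ band-dominated}\}$ in $M^p$, and the mollification argument recovering $b\in\BUC$ from norm-continuity of translations --- are standard and correctly deployed.
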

\begin{proof} 
The first part is Proposition \ref{PBDOComm}.
For the second, we start with the case $p=2$. Then, with $\varphi(x):=e^{\pm\ii x}$,
\begin{align*}
\|[W^0(b),\varphi_t I]\|&=\|[F^{-1}bF,\varphi_t I]\|=\|F^{-1}[b I,F\varphi_tF^{-1}]F\|\\
&=\|[b I,W^0(\varphi_{-t})]\|=\|[b I,V_{\pm 1/t}]\|,
\end{align*}
which tends to $0$ as $t\to\infty$ iff $b\in\BUC$. On the other hand, again by Proposition 
\ref{PBDOComm}, this tends to zero iff $W^0(b)$ is band-dominated.

If $W^0(b)$ is band-dominated on $L^p(\Rb)$, $p\neq 2$, then it is also band-dominated on 
$L^2(\Rb)$, hence $b\in\BUC$.

Finally, let $b\in\BUC_p$. Since the algebra of band-dominated operators is closed 
we can restrict our considerations to a dense subset and assume that $b\in M^r$ for 
an $r$ with $|r-2|>|p-2|$, by the definition of $\BUC_p$.
Then, for every $\varphi\in\BUC$, $\|\varphi\|_\infty=1$, it holds that
$\|[W^0(b),\varphi_t I]\|_{\lb{L^r(\Rb)}}\leq 2\|W^0(b)\|_{\lb{L^r(\Rb)}}$
for all $t>0$. On the 
other hand $\|[W^0(b),\varphi_t I]\|_{\lb{L^2(\Rb)}}\to 0$ as $t\to\infty$. By the
Riesz-Thorin Interpolation Theorem then also $\|[W^0(b),\varphi_t I]\|_{\lb{L^p(\Rb)}}\to 0$ 
as $t\to\infty$, thus $W^0(b)$ is band-dominated by Equation \eqref{EBDOComm}.
\end{proof}

For quasi-banded convolutions we have a characterization similar to 
\eqref{EBDOComm} as well. 
\begin{thm}\label{TQBCom}
A convolution $W^0(b)\in\lb{\Xb}$ is quasi-banded if and only if
\begin{equation}\label{EQBCom}
\|[W^0(b),\varphi_tI]\|\to 0 \text{ as } t\to \infty 
\quad\text{for every function}\quad \varphi\in C(\overline{\Rb}).
\end{equation}
\end{thm}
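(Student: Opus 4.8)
The plan is to prove the two implications of the equivalence separately. For the necessity direction ``$\Rightarrow$'' I would first reduce to the Hilbert space case $p=2$ by exactly the Riesz--Thorin argument used in the proof of Corollary \ref{CQBOComm}: since the algebra of quasi-banded operators is closed it suffices to establish \eqref{EQBCom} on a dense subset, i.e. for symbols $b\in M^{<p>}$, and then one interpolates the commutator norm between its trivial bound on $L^r(\Rb)$ (for an $r$ with $|r-2|>|p-2|$) and its decay on $L^2(\Rb)$. The sufficiency direction ``$\Leftarrow$'' I would instead carry out directly on $L^p(\Rb)$, using the kernel-tail description of quasi-bandedness.

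For necessity, assume $W^0(b)$ is quasi-banded and fix $\varphi\in C(\overline{\Rb})$. The crucial structural fact is $C(\overline{\Rb})\subset\BUC$, so $\varphi$ is uniformly continuous and $\varphi_t$ varies slowly: if $\omega$ denotes the modulus of continuity of $\varphi$ then $\osc(\varphi_t,I)\le\omega(|I|/t)\to0$ for every interval $I$ of fixed length. I would split $W^0(b)=B_L+R_L$, where $B_L$ is the band truncation of band-width $L$ and $R_L$ is the long-range remainder. The commutator with the band part is governed by slow variation, $\|[B_L,\varphi_tI]\|\le C\,\|B_L\|\,\omega(L/t)\to0$ as $t\to\infty$ for each fixed $L$. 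The remainder $[R_L,\varphi_tI]$ is where the quasi-banded hypothesis and, decisively, the limits of $\varphi$ at $\pm\infty$ must be used: because $\varphi$ is nearly equal to the constants $\varphi(\pm\infty)$ far out, and because \eqref{EDefQB} forces $\sup_n\|Q_{n+m}W^0(b)P_n\|$ and $\sup_n\|P_nW^0(b)Q_{n+m}\|$ to be small once $m$ is large, the long-range part only links regions on which $\varphi_t$ is essentially constant, and so $[R_L,\varphi_tI]$ can be made uniformly small. Choosing $L$ first and then letting $t\to\infty$ yields \eqref{EQBCom}.

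For sufficiency, assume \eqref{EQBCom} and test it against a nested pair of cut-offs $\varphi,\psi\in C(\overline{\Rb})$ with $\psi\equiv1$ on a neighbourhood of $\{\varphi\ne0\}$, so that $\psi\varphi=\varphi$ and hence
\[(I-\psi_t)\,W^0(b)\,\varphi_tI=(I-\psi_t)\,[W^0(b),\varphi_tI].\]
Thus the mass escaping from the support of $\varphi_t$ past that of $\psi_t$ is bounded by $\|[W^0(b),\varphi_tI]\|\to0$. Taking $\varphi,\psi$ to be smoothed characteristic functions of symmetric intervals, this controls quantities of the type $\|Q_{(1+\delta)t}W^0(b)P_t\|$. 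Here I would invoke shift invariance, $V_sW^0(b)V_{-s}=W^0(b)$: for a convolution the escaping mass depends only on the kernel tail beyond the separation, so a multiplicatively separated pair at a single scale and a uniform additive gap in \eqref{EDefQB} probe the same tail, and the decay of the former as $t\to\infty$ upgrades to $\sup_n\|Q_{n+m}W^0(b)P_n\|\to0$ as $m\to\infty$. The symmetric estimate for $\|P_nW^0(b)Q_{n+m}\|$ follows from the analogous computation with the roles of $\varphi_t$ and $I-\psi_t$ interchanged.

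The hard part is the long-range term $[R_L,\varphi_tI]$ in the necessity direction. Quasi-banded operators need not be band-dominated, so $\|R_L\|$ does not tend to $0$ as $L\to\infty$, and the naive estimate $\|[R_L,\varphi_tI]\|\le 2\|R_L\|\,\|\varphi\|_\infty$ is worthless; neither can one bound the commutator by the $L^1$-norm of the kernel tail, which for a quasi-banded but non-band-dominated convolution is not integrable. The entire argument therefore hinges on exploiting \eqref{EDefQB} in operator norm together with the approach of $\varphi_t$ to its limits at $\pm\infty$. It is exactly the restriction to $C(\overline{\Rb})$, as opposed to all of $\BUC$ (which contains the oscillating symbols $e^{\pm\ii x}$ that in Corollary \ref{CQBOComm} detect genuine band-dominatedness), that prevents the long-range part from surviving in the limit, and making this precise is the technical core of the proof.
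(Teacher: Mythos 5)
Your sufficiency argument is sound in spirit and close to the paper's, which tests \eqref{EQBCom} against piecewise linear splines and then invokes the characterization of quasi-bandedness of a shift-invariant operator through the $\Pc$-compactness of $\chi_\pm W^0(b)\chi_\mp I$; note, though, that the ``upgrade'' from decay of $\|Q_{(1+\delta)t}W^0(b)P_t\|$ at a single multiplicative scale to the uniform additive-gap condition \eqref{EDefQB} is exactly what that cited characterization supplies, and it should not be waved through as a consequence of shift invariance alone. The necessity direction, however, contains two genuine gaps. First, the proposed Riesz--Thorin reduction to $p=2$ does not apply here: in Corollary \ref{CQBOComm} the hypothesis is membership of $b$ in a $[\cdot]_p$-algebra, which yields a dense subclass of multipliers bounded on some $L^r$ and automatically controlled on $L^2$, so there is something to interpolate between. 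In the present theorem the hypothesis is the operator-theoretic property \eqref{EDefQB} on $L^p$, which is neither inherited by $L^2$ nor approximable through a dense symbol class, so the interpolation scheme has no input. The paper proves this direction directly on $L^p$.

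Second, and more seriously, the estimate of the long-range commutator --- which you yourself single out as ``the technical core'' --- is never actually supplied; you correctly explain why the naive bounds fail, but do not replace them. The decomposition $W^0(b)=B_L+R_L$ into a band truncation and a remainder is also the wrong starting point: for a quasi-banded but non-band-dominated convolution there is no band truncation with controllable remainder, and the claim that ``the long-range part only links regions on which $\varphi_t$ is essentially constant'' fails near the origin at scale $t$, where $\varphi_t$ still varies and where \eqref{EDefQB} does not forbid transport from, say, $[a,b]\subset(0,\infty)$ to $[-b,-a]$, since both lie inside $[-b-m,b+m]$. The paper's proof deals with precisely this obstruction by first splitting off $\chi_+W^0(b)\chi_-I$ and $\chi_-W^0(b)\chi_+I$ as $\Pc$-compact operators (for which \eqref{EQBCom} is immediate) and treating $B=\chi_\pm W^0(b)\chi_\pm I$ separately; only after this reduction does quasi-bandedness localize, via the partition $\Nb=\bigcup_k U_k$ into blocks of length $t$, the estimate $\sum_k\|P_{U_k}BQ_{V_k}\|<\epsilon$, and the block-diagonal bound $\bigl\|\sum_k P_{U_k}BP_{V_k}\bigr\|\le 5\|B\|$. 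Without these ingredients the necessity direction does not close.
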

\begin{proof}
Define $\chi_-:=\chi_{(-\infty,0]}$ and $\chi_+:=\chi_{[0,\infty)}$ and
recall from \cite[Proposition 13]{MaSaSe2014} that a shift-invariant operator 
$A=W^0(b)\in\lb{\Xb}$ is quasi-banded if and only if $\chi_\pm A \chi_\mp I$ are $\Pc$-compact.
Let \eqref{EQBCom} be true. The desired $\Pc$-compactness easily follows if 
$\|\chi_\pm A V_{\mp n}\chi_\mp V_{\pm n}\|\to 0$ as $n\to\infty$, since $A$ is 
shift invariant. But this is obvious taking \eqref{EQBCom} for the continuous
piecewise linear splines $\varphi(x)=0$ $(x\leq0)$, $\varphi(x)=1$ $(x\geq1)$ 
resp. $1-\varphi$ into account, e.g.
\[\|\chi_- A V_{n}\chi_+ V_{-n}\|=\|\chi_- A \varphi_{n} V_{n}\chi_+ V_{-n}\|
\approx\|\chi_- \varphi_{n} A V_{n}\chi_+ V_{-n}\|=0.\]

Conversely, let $A=W^0(b)$ be quasi-banded, $\varphi\in C(\overline{\Rb})$, 
$\|\varphi\|_\infty=1$, and $\epsilon>0$. Since $A-\chi_-A\chi_-I-\chi_+A\chi_+I$
is $\Pc$-compact for which \eqref{EQBCom} obviously holds, it suffices to consider 
$B=\chi_+A\chi_+I$.
Firstly choose $m\in\Nb$ such that $|\varphi(x)-\varphi(+\infty)|<\epsilon$ for all 
$x\geq m$. Moreover, there is an $r$ such that on each of the intervals
$[(k-1)m/r,km/r]$, $k=1,\ldots,r$, it holds that $|\varphi(x)-\varphi(km/r)|<\epsilon$,
respectively. W.l.o.g. we can assume that $m/r=1$, otherwise replace $\varphi$ by
$\varphi_{r/m}$. Also, there exists a $t_0$ such that 
$\|\varphi_t-\varphi_{\left\lfloor t\right\rfloor}\|_\infty<\epsilon$
for every $t\geq t_0$, where $\left\lfloor t\right\rfloor$ denotes the largest
integer $\leq t$.
Since $B$ is quasi-banded we can find a $t_1\in\Nb$, $t_1\geq t_0$ such that, 
with $Q_n:=I-P_n$,
\begin{equation*}
\sup_{n>0}\|Q_{n+t}BP_n\| <\frac{\epsilon}{2m+1}\text{ and } 
\sup_{n>0}\|P_n BQ_{n+t}\| <\frac{\epsilon}{2m+1}
\text{ for all $t\geq t_1$.}
\end{equation*}
Fix $t\in\Nb$, $t\geq t_1$.
For $i\in\Nb$ set $P_{\{i\}}:=P_i-P_{i-1}$ and for $U\subset\Nb$  
introduce the notation \mbox{$P_U:=\sum_{i\in U}P_{\{i\}}$} and $Q_{U}:=I-P_{U}$.
Now set $U_0:=\emptyset$, $U_k:=\{(k-1)t+1,\ldots,kt\}$,
$U_{m+1}:=\{mt+1,\ldots\}$ and $V_k:=U_{k-1}\cup U_k\cup U_{k+1}$,
$V_{m+1}:=\{(m-1)t+1,\ldots\}$ for all $k=1,\ldots,m$.
Then
\begin{align*}
\sum_{k=1}^{m+1}\|P_{U_k}BQ_{V_k}\| 
&\leq \sum_{k=1}^{m}\left(\|P_{U_k}P_{kt}BQ_{(k+1)t}Q_{V_k}\|
+\|P_{U_k}Q_{(k-1)t}BP_{(k-2)t}Q_{V_k}\|\right)\\
&+\|Q_{mt}BP_{(m-1)t}\| < \epsilon.
\end{align*}
Now we apply the partition $\Nb=\bigcup_{k=1}^{m+1} U_k$ as follows:
\begin{align*}
B\varphi_tI &=\sum_{k=1}^{m+1} P_{U_k}B\varphi_tI
=\sum_{k=1}^{m+1} P_{U_k}B\varphi_t P_{V_k} + \sum_{k=1}^{m+1} P_{U_k}B Q_{V_k}\varphi_t I,
\end{align*}
where the second term is less than $\epsilon$ as is shown before.
For the first term we abbreviate $\psi_k:=\varphi_t(kt)=\varphi(k)$, 
$k=1,\ldots,m$, and $\psi_{m+1}:=\varphi(+\infty)$, and get
\begin{align*}
\sum_{k=1}^{m+1} P_{U_k}B\varphi_t P_{V_k}
&=\sum_{k=1}^{m+1} P_{U_k}B(\varphi_t-\psi_k) P_{V_k} 
+ \sum_{k=1}^{m+1} \psi_kP_{U_k}B P_{V_k}\\
&=\sum_{k=1}^{m+1} P_{U_k}B(\varphi_t-\psi_k) P_{V_k}
+ \sum_{k=1}^{m+1} (\psi_k-\varphi_t)P_{U_k}B P_{V_k} 
+ \varphi_t\sum_{k=1}^{m+1} P_{U_k}B P_{V_k}\\
&=\sum_{k=1}^{m+1} P_{U_k}B P_{V_k} C_k
+ d_1\sum_{k=1}^{m+1} P_{U_k}B P_{V_k} 
- \varphi_t\sum_{k=1}^{m+1} P_{U_k}B Q_{V_k} + \varphi_t B
\end{align*}
where $C_k=d_2U_{k-1}+d_3U_k+d_4U_{k+1}$ with certain functions 
$d_1,d_2, d_3, d_4\in L^\infty(\Rb)$ of the norm less than $2\epsilon$.
Also the norm of the third term is less than $\epsilon$, again by the above 
construction. Thus, for integers $t\geq t_1$,
\begin{equation}\label{ESum}
\|B\varphi_tI-\varphi_tB\|\leq 8\epsilon\left\|\sum_{k=1}^{m+1} P_{U_k}B P_{V_k}\right\|
+2\epsilon.
\end{equation}
Due to the choice of $t_0$ we get a similar estimate for arbitrary real numbers 
$t\geq t_0$ with an additional summand $2\|B\|\epsilon$ at the right hand side.
It remains to estimate the sum in \eqref{ESum}: 
\[\left\|\sum_{k=1}^{m+1} P_{U_k}B P_{V_k}\right\|\leq
\left\|\sum_{k=2}^{m+1} P_{U_k}B P_{U_{k-1}}\right\|
+\left\|\sum_{k=1}^{m+1}P_{U_k}BP_{U_k}\right\|
+\left\|\sum_{k=1}^{m}P_{U_k}BP_{U_{k+1}}\right\|.\]
For the middle term $\left\|\sum_{k=1}^{m+1} P_{U_k}B P_{U_k}\right\|\leq\|B\|$ 
is obvious since this operator can be regarded as a block diagonal operator. 
Similarly, with the blocks  $P_{U_k}(B V_{-t})P_{U_k}$,
\[\left\|\sum_{k=2}^{m} P_{U_k}B P_{U_{k-1}}\right\|
=\left\|\sum_{k=2}^{m} P_{U_k}B V_{-t}P_{U_{k}}V_{t}\right\|\leq \|B\|,\]
as well as
$\left\|\sum_{k=1}^{m-1}P_{U_k}BP_{U_{k+1}}\right\|\leq\|B\|$. 
Finally, the two remaining summands have also norm $\|\cdot\|\leq\|B\|$
and we can conclude that the sum in \eqref{ESum} is $\leq 5\|B\|$, hence
\[\|B\varphi_tI-\varphi_tB\|\leq 8\epsilon\cdot5\|B\|+2\epsilon+2\|B\|\epsilon
\leq(42\|A\|+2)\epsilon\quad\text{for all } t\geq t_1.\] 
With a similar estimate for $B=\chi_-A\chi_-I$ this yields \eqref{EQBCom} and 
finishes the proof since $\epsilon>0$ was chosen arbitrarily.
\end{proof}

In \cite{MaSaSe2014} it has already been proved that quasi-banded operators, hence 
most of the operators we are interested in, belong to $\pb{\Xb}$. In particular, 
convolutions with multipliers in $\SO_p$, $\APer_p$ and $C(\overline{\Rb})_p$ are 
quasi-banded as is shown there. Here is the 
remaining ingredient for the present paper:

\begin{lem}\label{LSIO}
The Cauchy singular integral operator $S$ belongs to $\pb{X}$, but is not
quasi-banded.
\end{lem}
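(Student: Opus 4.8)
The plan is to prove the two assertions separately: membership $S\in\pb{\Xb}$ by direct off-diagonal decay estimates for the Cauchy kernel, and the failure of quasi-bandedness by exploiting the dilation invariance of $S$. Throughout I use that $S=W^0(-\sgn)$, so $S$ is a shift-invariant operator with the homogeneous kernel $\tfrac1{\pi\ii}(y-x)^{-1}$.

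For the membership I would first reduce the defining condition to a manageable one. Since $P_m\in\pc{\Xb}$ (because $P_mQ_n=Q_nP_m=0$ for $n\ge m$), it suffices (and is in fact equivalent) to verify that $\|Q_nSP_m\|\to0$ and $\|P_mSQ_n\|\to0$ as $n\to\infty$ for each fixed $m$. Indeed, given $K\in\pc{\Xb}$, writing $K=P_mK+Q_mK$ (resp.\ $K=KP_m+KQ_m$) and using $\|Q_mK\|\to0$ (resp.\ $\|KQ_m\|\to0$) reduces $\|Q_nSK\|$ (resp.\ $\|KSQ_n\|$) to those two norms, while the complementary bounds $\|SKQ_n\|\le\|S\|\,\|KQ_n\|$ and $\|Q_nKS\|\le\|Q_nK\|\,\|S\|$ are trivial; hence $SK,KS\in\pc{\Xb}$. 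I would then estimate the two norms from the kernel: for $|x|>n$ and $u\in\im P_m$ one has $|y-x|\ge|x|-m$, so
\[
|(SP_mu)(x)|\le\frac{(2m)^{1/p'}}{\pi(|x|-m)}\,\|u\|_p ,
\]
and integrating the $p$-th power over $|x|>n$ yields $\|Q_nSP_m\|\le C(m)\,(n-m)^{-1/p'}\to0$. A dual computation, applying H\"older to the integral of $u$ over $|y|>n$ and estimating the supremum over $|x|\le m$, gives $\|P_mSQ_n\|\le C(m)\,(n-m)^{-1/p}\to0$. Both decays use $1<p<\infty$ (so that $p,p'>1$), and together they establish $S\in\pb{\Xb}$.

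For the second assertion I would invoke the criterion recalled in the proof of Theorem~\ref{TQBCom} (from \cite[Proposition 13]{MaSaSe2014}): the shift-invariant operator $S$ is quasi-banded if and only if $\chi_+S\chi_-I$ and $\chi_-S\chi_+I$ are $\Pc$-compact. I would show $\chi_+S\chi_-I\notin\pc{\Xb}$ by using that $S$ commutes with the isometric dilations $(D_\lambda u)(x)=\lambda^{1/p}u(\lambda x)$, $\lambda>0$ (its kernel being homogeneous of degree $-1$, equivalently $-\sgn(\xi/\lambda)=-\sgn(\xi)$). Since $D_\lambda$ fixes $\chi_+I$ and conjugates $\chi_-Q_n=\chi_{(-\infty,-n)}I$ into $\chi_{(-\infty,-n/\lambda)}I$, the choice $\lambda=n$ gives
\[
\|\chi_+S\chi_-Q_n\|=\|\chi_+S\chi_{(-\infty,-n)}I\|=\|\chi_+S\chi_{(-\infty,-1)}I\|=:c
\]
for all $n$. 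A direct evaluation shows $c>0$: for $u=\chi_{(-2,-1)}$ and $x>0$ one computes $(Su)(x)=\tfrac1{\pi\ii}\ln\tfrac{x+1}{x+2}\ne0$, so $\chi_+S\chi_{(-\infty,-1)}I\ne0$. Hence $\|\chi_+S\chi_-Q_n\|\not\to0$, the operator $\chi_+S\chi_-I$ is not $\Pc$-compact, and therefore $S$ is not quasi-banded.

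The main obstacle is the membership part: one must extract genuine norm decay (not mere boundedness) from the slowly decaying Cauchy kernel, and a naive Schur test fails because $\int(y-x)^{-1}\,dx$ diverges. The resolution is that $P_m$ and $Q_n$ confine $x$ and $y$ to regions separated by a gap growing like $n$, so that the tail integrals $\int_n^\infty(x-m)^{-p}\,dx$ and $\int_n^\infty(|y|-m)^{-p'}\,dy$ converge and vanish; carefully tracking these exponents, and confirming the scale-invariance bookkeeping in the second part, is where the care is needed.
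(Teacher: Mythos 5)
Your proof is correct. For the membership $S\in\pb{\Xb}$ you follow essentially the same route as the paper: a H\"older estimate on the Cauchy kernel exploiting the gap of width $n-m$ between the supports cut out by $P_m$ and $Q_n$, combined with the characterization of $\pb{\Xb}$ via $\|P_mAQ_n\|+\|Q_nAP_m\|\to0$ (the paper relies on the cited \cite[Proposition 1.1.8]{RRS2004} for this reduction, whereas you spell it out; the paper also derives one of the two estimates by duality with $L^q$, while you compute both directly and get power decay $(n-m)^{-1/p'}$ resp.\ $(n-m)^{-1/p}$ instead of the paper's logarithmic bound --- all of which tend to zero, so this is immaterial). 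Where you genuinely diverge is the second assertion: the paper disposes of it with a bare citation of \cite[Proposition 26]{MaSaSe2014}, whereas you give a self-contained argument, invoking the criterion (recalled in the proof of Theorem \ref{TQBCom}) that a shift-invariant operator is quasi-banded iff $\chi_\pm S\chi_\mp I\in\pc{\Xb}$, and then using the commutation of $S$ with the isometric dilations $D_\lambda$ to show $\|\chi_+S\chi_-Q_n\|=\|\chi_+S\chi_{(-\infty,-1)}I\|>0$ for all $n$; the test function $\chi_{(-2,-1)}$ correctly certifies that this constant is nonzero. Your version buys a proof that is independent of the external Proposition 26 and makes transparent that the obstruction to quasi-bandedness is precisely the scale invariance of the Cauchy kernel, at the modest cost of a slightly longer argument.
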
 
\begin{proof}
Let $u\in \Xb=L^{p}(\Rb)$ and $n>m$. Using the modulus inequality,
H\"older's inequality and $(q-1)\frac{p}{q}=1$ we obtain,
\begin{align*}
\|P_mS(I-P_n)u\|^p & \leq \| P_mS\chi_{[n,+\infty)}u\|^p+\|P_mS\chi_{(-\infty,-n]}u\|^p\\
 & =\frac{1}{\pi^p}\int_{-m}^m\left(\left|\int_{-\infty}^{-n}\frac{u(y)}{y-x}dy\right|^p
		+\left|\int_n^{+\infty}\frac{u(y)}{y-x}dy\right|^p\right)dx\\
 & \leq\frac{1}{\pi^p}\int_{-m}^m \|u\|^p 
		\left(\left(\int_{-\infty}^{-n}\frac{1}{|y-x|^q}dy\right)^{\frac{p}{q}}
			+\left(\int_n^{+\infty}\frac{1}{|y-x|^q}dy\right)^{\frac{p}{q}}\right)dx\\
 & \leq\frac{1}{\pi^p} \|u\|^p \int_{-m}^m\left(\frac{1}{(q-1)(n+x)^{q-1}}\right)^{\frac{p}{q}}
			+\left(\frac{1}{(q-1)(n-x)^{q-1}}\right)^{\frac{p}{q}}dx\\
 & =C\|u\|^p \int_{-m}^m\left(\frac{1}{n+x}+\frac{1}{n-x}\right)dx\\
 & =C\|u\|^p \cdot 2\ln\left(\frac{n+m}{n-m}\right),
\end{align*}
where $C=\pi^{-p}(q-1)^{-\frac{p}{q}}$. Since for any $m\in\Nb$
the last expression tends to $0$ as $n\rightarrow\infty$, we have
$\|P_mS(I-P_n)\|\rightarrow 0$ as $n\rightarrow\infty$. 

Now from the equality
$\|(I-P_{n})SP_{m}\|=\|P_m^*S^*(I-P_n)^*\|_{L^q}=\|P_mS(I-P_n)\|_{L^q}$
we obtain the dual formula: for every $m\in\Nb$, 
$\|(I-P_{n})SP_{m}\|\rightarrow 0$ as $n\rightarrow\infty$.
Finally, $S$ is not quasi-banded by \cite[Proposition 26]{MaSaSe2014}.
\end{proof}

\begin{cor}\label{CConvClass}
If $b\in[\PC,\SO,\BUC]_p$ then $W^0(b)\in\pb{\Xb}$. Moreover, 
if $b\in[\SO,\BUC]_p$ then $W^0(b)$ is quasi-banded.
\end{cor}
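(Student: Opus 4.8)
The plan is to exploit that the Fourier convolution map $W^0\colon M^p\to\lb{\Xb}$ is an isometric algebra homomorphism (by the definition of $\|\cdot\|_{M^p}$ and since $W^0(ab)=W^0(a)W^0(b)$), so that the preimage of any closed subalgebra of $\lb{\Xb}$ is a closed subalgebra of $M^p$. Put
\[B:=\{b\in M^p:W^0(b)\in\pb{\Xb}\},\qquad Q:=\{b\in M^p:W^0(b)\text{ is quasi-banded}\}.\]
Since $\pb{\Xb}$ is a Banach algebra and the quasi-banded operators form a closed subalgebra of it, both $B$ and $Q$ are closed subalgebras of $M^p$ with $Q\subseteq B$. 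The corollary thus amounts to the two inclusions $[\SO,\BUC]_p\subseteq Q$ and $[\PC,\SO,\BUC]_p\subseteq B$, which I would obtain by checking the generators and then invoking closedness.

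For the quasi-banded statement I would verify the two types of generators. First, $\SO_p\subseteq Q$ is exactly the content cited from \cite{MaSaSe2014}. Second, if $b\in\BUC_p$ then $W^0(b)$ is band-dominated by Corollary \ref{CQBOComm}, and band-dominated operators are quasi-banded; hence $\BUC_p\subseteq Q$. As $Q$ is a closed subalgebra of $M^p$ containing both $\SO_p$ and $\BUC_p$, it contains the closed subalgebra they generate, which is $[\SO,\BUC]_p$. This settles the second assertion; note that it is consistent with Lemma \ref{LSIO} that $\PC$ does not appear here, since $S=W^0(-\sgn)$ is not quasi-banded.

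For the first statement it remains to add the piecewise continuous generators to $B$. By Lemma \ref{LSIO} we have $S=W^0(-\sgn)\in\pb{\Xb}$, hence $W^0(\chi_{[0,\infty)})=\tfrac12(I-S)\in\pb{\Xb}$, so $\chi_{[0,\infty)}\in B$. A general $\chi_{[\lambda,\infty)}$ is a frequency translate of $\chi_{[0,\infty)}$, and $W^0$ turns frequency translation into conjugation by the multiplication operator $e^{\ii\lambda x}I$; the latter commutes with every $\varphi_tI$ and is therefore band-dominated by Proposition \ref{PBDOComm}, hence lies in $\pb{\Xb}$. Since $\pb{\Xb}$ is an algebra, $\chi_{[\lambda,\infty)}\in B$ for all $\lambda\in\Rb$. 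Finally $C(\dot{\Rb})\subseteq\BUC$ gives $C(\dot{\Rb})_p\subseteq\BUC_p\subseteq B$. By Lemma \ref{LPCp} the $\chi_{[\lambda,\infty)}$ together with $C(\dot{\Rb})_p$ generate a closed subalgebra of $M^p$ sitting inside $\PC_p$.

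The main obstacle is the very last step: passing from ``the property holds on the listed generators'' to ``it holds on all of $[\PC,\SO,\BUC]_p$''. Concretely, one must know that $[\PC,\SO,\BUC]_p$ is contained in the closed subalgebra of $M^p$ generated by $C(\dot{\Rb})_p$, the $\chi_{[\lambda,\infty)}$, $\SO_p$ and $\BUC_p$; equivalently, that these generators already exhaust $\PC_p$ in the $M^p$-norm. This is exactly the point where the $M^p$-closure in the definition $\Bc_p=\overline{\Bc\cap M^{<p>}}^{M^p}$ has to be reconciled with the $L^\infty$-algebra generation defining $[\PC,\SO,\BUC]$, and it is delicate because an $L^\infty$-approximation of a piecewise continuous symbol by finitely-jump symbols need not be an $M^p$-approximation (this is precisely the circle of questions flagged in the Remark after Lemma \ref{LPCp}). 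Granting this identification, the closedness of $B$ in $M^p$ upgrades the generator membership to $\PC_p\subseteq B$ and then to $[\PC,\SO,\BUC]_p\subseteq B$, completing the proof.
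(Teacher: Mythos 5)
Your overall strategy --- pulling back the closed algebras $\pb{\Xb}$ and the quasi-banded operators along the isometric homomorphism $W^0$ and checking generators --- is natural, but the gap you flag at the end is genuine, and it in fact affects \emph{both} assertions, not only the $\PC$ part. The algebra $[\PC,\SO,\BUC]_p$ is by definition the $M^p$-closure of $[\PC,\SO,\BUC]\cap M^{<p>}$, and an element $b$ of the latter set is an $L^\infty$-limit of polynomials in $\PC$-, $\SO$- and $\BUC$-functions whose individual factors need not be multipliers at all; since $L^\infty$-convergence does not control the $M^p$-norm, there is no way to conclude that such a $b$ lies in the $M^p$-closed algebra generated by $C(\dot{\Rb})_p$, the $\chi_{[\lambda,\infty)}$, $\SO_p$ and $\BUC_p$. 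The same objection applies to your identification of ``the closed subalgebra of $M^p$ generated by $\SO_p$ and $\BUC_p$'' with $[\SO,\BUC]_p$. This is precisely the circle of open questions recorded in the Remark after Lemma \ref{LPCp}, so it cannot simply be granted.

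The paper closes this gap by a different route, modelled on the second half of the proof of Corollary \ref{CQBOComm}. For $p=2$ one has $M^2=L^\infty(\Rb)$, so your closed-subalgebra argument works without any subtlety and yields $W^0(b)\in\pb{L^2(\Rb)}$ (resp.\ quasi-bandedness) for \emph{every} $b$ in the full $L^\infty$-algebra $[\PC,\SO,\BUC]$ (resp.\ $[\SO,\BUC]$). For $p\neq2$ one then takes an arbitrary individual $b\in[\PC,\SO,\BUC]\cap M^{<p>}$ --- this set is dense in $[\PC,\SO,\BUC]_p$ \emph{by definition}, so no structural description of $\PC_p$ in terms of generators is needed --- and picks $r$ with $|r-2|>|p-2|$ and $b\in M^r$. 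By \cite[Proposition 1.1.8]{RRS2004}, membership in $\pb{\Xb}$ is equivalent to $\|P_mW^0(b)Q_n\|+\|Q_nW^0(b)P_m\|\to0$ for each fixed $m$; these quantities are uniformly bounded on $L^r(\Rb)$ and tend to zero on $L^2(\Rb)$ by the case already settled, so the Riesz--Thorin interpolation theorem forces them to zero on $L^p(\Rb)$. The same interpolation applied to the norms in \eqref{EDefQB} gives the quasi-banded statement for $b\in[\SO,\BUC]\cap M^{<p>}$. Closedness of $\pb{\Xb}$ and of the quasi-banded algebra then finishes the proof. If you want to salvage your write-up, replace the final ``granting this identification'' step (and the corresponding unproved identification in the quasi-banded part) by this interpolation argument.
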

\begin{proof}
In the case $p=2$ we have already tackled all generators of $[\PC,\SO,\BUC]$
in the previous considerations.

If $p\neq 2$ then we again use an interpolation argument as in the proof of Corollary
\ref{CQBOComm} and combine it with \cite[Proposition 1.1.8]{RRS2004} which states 
that an operator $A$ belongs to $\pb{\Xb}$ if and only if 
\[\|P_mAQ_n\|+\|Q_nAP_m\|\to 0 \quad\text{as}\quad n\to\infty\quad\text{for every}\quad m\in\Nb.\]
Let $b\in[\PC,\SO,\BUC]\cap M^{<p>}$ and $r$ with
$|r-2|>|p-2|$ such that $b\in M^r$. Then $\|P_mW^0(b)Q_n\|+\|Q_nW^0(b)P_m\|\leq 2\|W^0(b)\|$
over $L^r(\Rb)$. Since $W^0(b)\in \pb{L^2(\Rb)}$ by the above, we have 
$\|P_mW^0(b)Q_n\|+\|Q_nW^0(b)P_m\|\to 0$ over $L^2(\Rb)$. By the Riesz-Thorin Interpolation 
Theorem we get convergence to zero also over $L^p(\Rb)$.

Finally, if $b\in[\SO,\BUC]\cap M^{<p>}$ then we use the same interpolation
argument to estimate the norms in \eqref{EDefQB}.
\end{proof}

\subparagraph{Summary}
We get the following picture for convolutions on $\Xb=L^p(\Rb)$: \\
All convolution operators with multipliers in the algebra $[\PC,\SO,\BUC]_p$ 
belong to $\pb{\Xb}$, but some of them are not in the class of quasi-banded operators,
in particular those with Fourier multipliers with jumps at finite points. 
In contrast, jumps at infinity are permitted and all functions in the algebra $[\SO,\BUC]_p$ 
yield quasi-banded operators.  Further, for all functions 
$b\in [\SO^\infty,\APer,\BUC]_p=[\BUC]_p$ the operators $W^0(b)$ are even banded-dominated. 
We point out that slowly oscillating multipliers with discontinuities at finite points 
do not generate band-dominated convolutions, but only quasi-banded operators.

\section{Setting up the algebraic framework}
The basic idea for the study of stability (and more general properties) of the
finite sections sequences is to embed these sequences $\Ab=\{A_n\}$ into a Banach 
algebra framework. In this framework, there is a family of homomorphisms which condense
$\Ab$ to single operators which we call snapshots. Intuitively speaking
we pursue a divide and conquer strategy: each one of these snapshots captures a certain 
easy part of the relevant properties of $\Ab$, and all of them together are sufficient
for the complete characterization of $\Ab$.

We start with defining this algebra framework on an abstract level and state all
relevant prerequisites as well as the main results. In Section \ref{SSAlgConv}
we  apply this tool to our problem and show that it covers the
classes of convolution type operators that we announced in the introduction.
At this stage, the general main theorem still contains a somewhat abstract and unhandy 
condition which is removed in Section \ref{SLocal} by localization. This then yields 
the main results of this paper, in particular Theorem \ref{TIntro}.
Finally, in Section \ref{SFlip} we propose a slightly modified implementation
of the framework which makes the snapshots a little more involved, but then 
additionally includes the flip operator, hence Hankel operators, as well.

\subsection{Algebras of structured and rich sequences}\label{SStruct}
Here we mainly follow \cite{MaSaSe2014, SeidelDiss, SeSi2012}. Let $\Xb$ be a 
Banach space. We are interested in the study of bounded sequences
$\Ab=\{A_n\}_{n\in\Nb}$ of bounded operators $A_n\in\lb{\Xb}$. By $\Fc$ we denote 
the set (actually the Banach algebra, equipped with the usual entry-wise defined 
linear structure and the norm $\|\Ab\|:=\sup_n\|A_n\|$) of all such bounded
sequences.

As announced, we further introduce a family (w.r.t. an index set $T$) of snapshots
$\Ws^t(\Ab)$, $t\in T$, which are bounded linear operators on certain Banach spaces
$\Xb^t$, respectively. This is done as follows:
Suppose that for each $t\in T$ there is a sequence of Banach algebra isomorphisms 
$E_n^t:\lb{\Xb^t}\to\lb{\Xb}$, $n\in \Nb$. Then the sequence 
$\{A_n^{(t)}\}:=\{(E_n^t)^{-1}(A_n)\}$ is a transformed copy of $\Ab$ of operators
now acting on $\Xb^t$.
Then, let $\Fc^T$ denote the set of all sequences $\Ab\in\Fc$ with the property that
for every $t\in T$ there exists an operator in $\lb{\Xb^t}$, which we denote by 
$\Ws^t(\Ab)$, such that
\[A_n^{(t)}=(E_n^t)^{-1}(A_n) \to \Ws^t(\Ab)\quad\text{as}\quad n\to\infty.\]
This convergence is to be understood as as one of the three following cases:
\begin{itemize}
\item Either all these limits are $*$-strong limits, which means that 
			$A_n^{(t)} \to \Ws^t(\Ab)$ and $(A_n^{(t)})^* \to (\Ws^t(\Ab))^*$ strongly, 
\item or on every space $\Xb^t$ there exists a uniform approximate identity $\Pc^t$ 
			which equips $\Xb^t$ with the $\Pc^t$-dichotomy, and the limits are 
			$\Pc^t$-strong limits, respectively. In that case $\Ws^t(\Ab)$ has to belong
			to $\lb{\Xb^t,\Pc^t}$,
\item or there is a partition of $T=T_*\cup T_\Pc$, and the limits for $t\in T_*$
			are supposed to be $*$-strong limits, whereas for all $t\in T_\Pc$ they
			are $\Pc^t$-strong limits as above, respectively.
\end{itemize}
We say that the sequences in $\Fc^T$ are $T$-structured, since they have a
certain asymptotic structure which is reflected in their snapshots.

\begin{figure}
	\centering
  \includegraphics[scale=1]{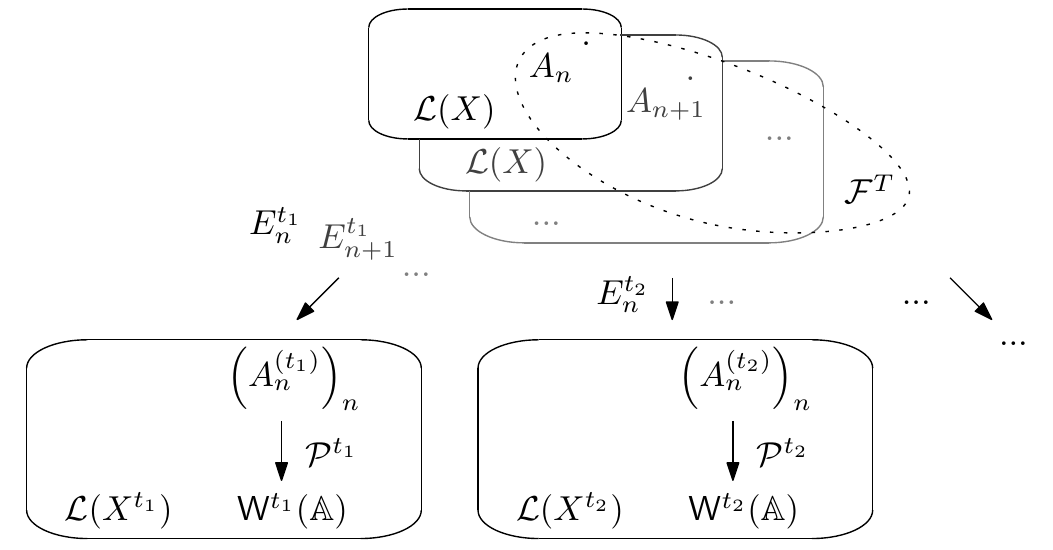}
  \caption{Taking snapshots: transformation and passing to limits}
  \label{fig:SAlgs}
\end{figure}

\subparagraph{Compact and Fredholm sequences}
Besides invertibility and stability, we also want to address a 
Fredholm property (kind of ``almost stability''), thus we have to introduce 
compact-like sequences as follows:

For $t\in T$ let $\Kc^t$ be the set of all compact or $\Pc^t$-compact operators, 
depending on the type of the convergence (i.e. depending on whether $t\in T_*$ or 
$t\in T_\Pc$). 
Then, let $\Jc^T$ be the smallest closed subspace of $\Fc$ which contains all sequences
$\{E_n^t(K)\}$, $K\in\Kc^t$, $t\in T$, and all sequences $\{G_n\}$, $\|G_n\|\to 0$
as $n\to\infty$.

\medskip
In order to make things fit together one has to impose two natural conditions:
\begin{itemize}
\item[(I)] For all $t\in T$ it holds that 
			$\sup\{\|E_n^t\|,\|(E_n^t)^{-1}\|:n\in\Nb\}<\infty$.
\item[(II)] For all $\tau,t\in T$ and every $K^t \in \Kc^t$ it holds that
	\begin{equation*}
		\Ws^\tau \{E^t_n (K^t)\}=
			\begin{cases}
				K^t& \text{if}\; t=\tau \\
				0  & \text{if}\; t\neq\tau
			\end{cases}.
	\end{equation*}
\end{itemize}
The first condition guarantees that the transformations are not too exotic, and the
second one is usually referred to as the separability condition and means that the directions 
from which one can look at a sequence are separated in the sense that the ($\Pc^t$-)compact 
operators $K^t$ and their liftings $\{E^t_n (K^t)\}$ which arise from one point of view $t\in T$ 
are invisible from every other direction $\tau$.

\medskip
In such a setting, $\Fc^T$ is a unital\footnote{Clearly, $\{I\}$ is its unit.} 
Banach algebra which includes $\Jc^T$ as a closed two-sided ideal. The mappings
$\Ws^t:\Fc^T\to\lb{\Xb^t}$, $t\in T$, which send each $\Ab$ to $\Ws^t(\Ab)$, respectively,
are Banach algebra homomorphisms.

A sequence $\Ab\in\Fc^T$ is said to be $\Jc^T$-Fredholm if $\Ab+\Jc^T$ is invertible
in the quotient algebra $\Fc^T/\Jc^T$. So this is ``invertibility modulo $\Jc^T$-compact
sequences''. 
Actually, if $\Ab$ is $\Jc^T$-Fredholm then all its snapshots are Fredholm, resp. 
$\Pc$-Fredholm. If all snapshots are Fredholm then $\Ab$ is said to be regular.

The collection of the main results for $T$-structured sequences reads as follows:

\begin{thm} \label{TTstructMain}
Let $\Ab=\{A_n\}\in\Fc^T$.
\begin{enumerate}
\item If $\Ab$ is a regular $\Jc^T$-Fredholm sequence then, for sufficiently large $n$, 
			the operators $A_n$ are Fredholm and all snapshots $\Ws^t(\Ab)$ are Fredholm. Moreover,
			their approximation numbers from the right have the $\ah (\Ab)$-splitting 
			property, that is
			\[\lim_{n\to\infty} s_{\ah (\Ab)}^r(A_n) = 0 \quad\text{and}\quad
				\liminf_{n\to\infty} s_{\ah (\Ab) + 1}^r(A_n) > 0,\]
			with
			\[\alpha(\Ab)=\sum_{t\in T}\dim\ker \Ws^t(\Ab)<\infty.\]
			Analogously, the approximation numbers from the left have the 
			$\bh (\Ab)$-splitting property, where $\beta(\Ab)=\sum_{t\in T}\dim\coker \Ws^t(\Ab)$
			is finite. Furthermore,
			\[\lim_{n\to\infty} \ind A_n = \sum_{t\in T} \ind \Ws^t(\Ab).\]
\item If one snapshot is not Fredholm then, for all $k\in\Nb$, 
			$\min\{s_k^r(A_n), s_k^l(A_n)\}\to0$ as $n\to\infty$.
\item $\Ab$ is stable if and only if $\Ab$ is $\Jc^T$-Fredholm and all snapshots 
			are invertible.
\end{enumerate}
\end{thm}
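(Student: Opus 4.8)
The plan is to treat the three assertions in the order (3), (1), (2), since the stability criterion underpins the Fredholm statements. In (3) the implication that stability entails $\Jc^T$-Fredholmness together with invertibility of all snapshots is the soft half: stability means $\Ab$ is invertible in $\Fc^T$ modulo the ideal $\Gc$ of null sequences, hence a fortiori modulo the larger ideal $\Jc^T$, and applying the homomorphisms $\Ws^t$ (which are bounded by condition (I) and annihilate $\Gc$) to $\Ab$ and to its asymptotic inverse shows that each $\Ws^t(\Ab)$ is invertible. For the converse I would first use $\Jc^T$-Fredholmness to produce a regularizer $\Bb$ with $\Ab\Bb-\{I\},\ \Bb\Ab-\{I\}\in\Jc^T$. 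Applying $\Ws^t$ and using invertibility of the snapshots shows that $\Ws^t(\Bb)$ equals $\Ws^t(\Ab)^{-1}$ up to a ($\Pc^t$-)compact defect $c_t$; since any element of $\Jc^T$ has $\|\Ws^t(\cdot)\|$ exceeding a given threshold for only finitely many $t$, and since by (II) subtracting a lifting $\{E_n^{t_0}(c_{t_0})\}$ alters the snapshot at $t_0$ while leaving all others untouched, I can correct $\Bb$ into a regularizer $\Bb'$ with $\Ws^t(\Ab\Bb')=I$ for every $t$. This reduces stability of $\Ab$ to the \emph{model statement}: a sequence $\{I\}+\mathbf{J}$ with $\mathbf{J}\in\Jc^T$ and $\Ws^t(\mathbf{J})=0$ for all $t$ is stable; indeed, granting this, $\Ab\Bb'$ and $\Bb'\Ab$ are stable, so $\Ab$ is invertible modulo $\Gc$.

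The model statement is the heart of the matter and is equivalent to the \emph{faithfulness} $\bigcap_{t}\ker\Ws^t\cap\Jc^T=\Gc$: once $\mathbf{J}$ with vanishing snapshots is known to be a null sequence, $\{I\}+\mathbf{J}$ is stable by a Neumann series. To prove faithfulness I would approximate $\mathbf{J}$ in norm by finite sums $\sum_i\{E_n^{t_i}(K_i)\}$ with distinct $t_i$; by separability (II) the coefficient $K_i$ equals $\Ws^{t_i}$ of the finite sum, which lies within the approximation error of $\Ws^{t_i}(\mathbf{J})=0$, so every $K_i$ is small. The genuinely hard point is that the liftings attached to distinct indices decouple \emph{in norm}, so that a finite sum of uniformly small coefficients is itself small and $\mathbf{J}$ is null. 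I expect this upgrade of the merely \emph{strong} separation supplied by (II) into a \emph{uniform} norm estimate to be the main obstacle; the natural tools are the mapping properties of ($\Pc^t$-)compact operators and the $\Pc^t$-dichotomy, which should let one realise the finite-sum action as an almost block-diagonal operator whose norm is controlled by its largest block.

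Granting (3), assertion (1) follows by regularization. If $\Ab$ is regular and $\Jc^T$-Fredholm, the corrected regularizer exhibits $\Bb'\Ab$ as $\{I\}$ plus a $\Jc^T$-sequence whose snapshots are ($\Pc^t$-)compact with Fredholm associated limits, from which I would deduce that the $A_n$ are Fredholm for large $n$. The lower splitting bound $\liminf_n s^r_{\alpha(\Ab)+1}(A_n)>0$ comes from stability of the sequence obtained after factoring out the finitely many kernel directions of the snapshots, while $s^r_{\alpha(\Ab)}(A_n)\to0$ is produced by transporting, through the $E_n^t$ and the $\Pc^t$-dichotomy, the $\sum_t\dim\ker\Ws^t(\Ab)$ near-kernel vectors of the snapshots into approximate kernel vectors of $A_n$; faithfulness and separability again guarantee these are asymptotically independent, so the count is exact, and it is finite because $\Jc^T$-Fredholmness precludes infinitely many independent defects. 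The symmetric argument treats the left approximation numbers and $\beta(\Ab)=\sum_t\dim\coker\Ws^t(\Ab)$. Finally $\lim_n\ind A_n=\sum_t\ind\Ws^t(\Ab)$ follows because the two splitting properties give $\ind A_n=\alpha(\Ab)-\beta(\Ab)$ for large $n$, and $\sum_t\ind\Ws^t(\Ab)=\sum_t(\dim\ker\Ws^t(\Ab)-\dim\coker\Ws^t(\Ab))=\alpha(\Ab)-\beta(\Ab)$.

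Assertion (2) is a direct contrapositive. If some snapshot $\Ws^{t_0}(\Ab)$ is not ($\Pc^{t_0}$-)Fredholm, the $\Pc^{t_0}$-dichotomy (or the corresponding non-Fredholmness input when $t_0$ belongs to the $*$-strong part) yields, for every $k$ and every $\epsilon>0$, a projection $Q\in\Kc^{t_0}$ with $\rk Q\geq k$ and either $\|\Ws^{t_0}(\Ab)\,Q\|<\epsilon$ or $\|Q\,\Ws^{t_0}(\Ab)\|<\epsilon$. Because $Q$ is ($\Pc^{t_0}$-)compact and $(E_n^{t_0})^{-1}(A_n)\to\Ws^{t_0}(\Ab)$, the products $((E_n^{t_0})^{-1}(A_n)-\Ws^{t_0}(\Ab))Q$ and $Q((E_n^{t_0})^{-1}(A_n)-\Ws^{t_0}(\Ab))$ tend to $0$ in norm, so for large $n$ the analogous quantity for $(E_n^{t_0})^{-1}(A_n)$ stays below $\epsilon$; transporting back through the uniformly bounded $E_n^{t_0}$ produces a subspace of dimension $\geq k$ on which $A_n$ or its adjoint nearly vanishes, i.e. $\min\{s^r_k(A_n),s^l_k(A_n)\}\to0$. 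The only care is to take $n$ large enough that the snapshot approximation is accurate on the finite-dimensional range of $Q$.
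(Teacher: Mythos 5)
The paper offers no proof of this theorem itself: it defers entirely to \cite[Section 2]{SeidelDiss} and \cite[Section 2]{SeSi2012}, so the comparison is with the arguments there. Your outline has the right architecture for parts (2) and (3), but the one step you yourself flag as ``the heart of the matter'' in (3) is a genuine gap, and your chosen route to it is probably not viable. Reducing the lifting theorem to the faithfulness statement $\Jc^T\cap\bigcap_t\ker\Ws^t=\Gc$ (with $\Gc$ the ideal of null sequences) amounts to a uniform estimate $\bigl\|\sum_i\{E_n^{t_i}(K_i)\}+\Gc\bigr\|\le C\max_i\|K_i\|$, i.e.\ a $c_0$-direct-sum decomposition of $\Jc^T/\Gc$. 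That is available in the $C^*$-setting via orthogonality, but on $L^p$ there is no reason why a sum of liftings supported at distinct indices should be almost block-diagonal in norm, and conditions (I)--(II) only give the lower bound (boundedness of the coefficient functionals), not the upper one. The cited proofs avoid faithfulness altogether: approximate the defect $\Jb$ in $\Bb\Ab=\{I\}+\Jb$ by a finite sum $\Gb+\sum_{i=1}^N\{E_n^{t_i}(K_i)\}$; note that $\{I\}+\{E_n^{t_i}(K_i)\}$ is invertible in $\Fc$ whenever $I+K_i$ is invertible, with inverse $\{I\}+\{E_n^{t_i}(-(I+K_i)^{-1}K_i)\}$; note that products of liftings at \emph{distinct} indices are null sequences (since $E_n^{t}(K)E_n^{\tau}(L)=E_n^{\tau}\bigl((E_n^{\tau})^{-1}(E_n^{t}(K))\,L\bigr)$ and the inner factor tends to zero by (II) and the definition of ($\Pc^\tau$- or $*$-)strong convergence); then remove the liftings one at a time by a finite induction and close with a Neumann-series perturbation. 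Your ``correction of $\Bb$ to $\Bb'$ at every $t$'' suffers from the same problem, since it requires an infinite sum of liftings to converge, which again presupposes the unproven decoupling.

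In part (1) there is a second, independent error: the index formula does not follow from the two splitting properties. The bound $\liminf_n s^r_{\alpha(\Ab)+1}(A_n)>0$ yields only $\dim\ker A_n\le\alpha(\Ab)$ for large $n$, and $s^r_{\alpha(\Ab)}(A_n)\to0$ does \emph{not} yield $\dim\ker A_n\ge\alpha(\Ab)$ (for the finite sections of the Toeplitz operator $T(t-1/2)$ one has $\alpha=\beta=1$ while the triangular matrices $T_n$ are invertible, so $\dim\ker T_n=0$), hence $\ind A_n=\alpha(\Ab)-\beta(\Ab)$ cannot be read off from the splitting behaviour. The proof in \cite{SeSi2012} instead uses regularity and the $\Pc^t$-dichotomy to choose finite-rank ($\Pc^t$-compact) projections onto $\ker\Ws^t(\Ab)$ and complements of $\im\Ws^t(\Ab)$, lifts them to a $\Jc^T$-perturbation of $\Ab$ that is \emph{stable}, and then obtains the index formula, the finiteness of $\alpha(\Ab)$ and $\beta(\Ab)$, and the exactness of the kernel count by tracking everything through that explicit perturbation; these are precisely the points you assert (``asymptotically independent'', ``the count is exact'') without argument. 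Your treatment of part (2), transporting the large-rank, almost-annihilating projections supplied by the $\Pc^{t_0}$-dichotomy through the uniformly bounded $E_n^{t_0}$, is essentially the standard argument and is sound.
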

So, roughly speaking, in the Fredholm case, the Fredholm properties of   ${A_n}$ are 
captured by the snapshots, in particular the stability is connected with the invertibility 
of all $\Ws^t(\Ab)$. Otherwise, if one snapshot is not Fredholm then also the
$A_n$ cannot be close to Fredholmness.

Of course, we have to recall the definitions of the approximation numbers which are 
used here to gather the Fredholm properties of the $A_n$ from \cite{SeSi2012}
\begin{align*}
s_k^r(A)&:=\inf\{\|A-T\|:T\in\lb{\Xb}, \dim\ker T \geq k\},\\
s_k^l(A)&:=\inf\{\|A-T\|:T\in\lb{\Xb}, \dim\coker T \geq k\}.
\end{align*}
In a sense, these numbers measure the degree of (or distance to) injectivity and
surjectivity, respectively. Also notice that in case of $\Xb$ being a 
Hilbert space, these approximation numbers coincide with the lower singular 
values of $A$ or $A^*$, respectively.

\subparagraph{Rich sequences}
Actually, for our purposes in Section \ref{SSAlgConv} this framework of $T$-structured 
sequences is still too restrictive, and what we need is a tool for sequences $\Ab$ 
which may not be $T$-structured (as a full sequence), but which have at least sufficiently 
many subsequences $\Ab_g=\{A_{g_n}\}$ (where $g:\Nb\to\Nb$ is strictly increasing)
being $T$-structured.

To be more precise, we firstly point out, that one can of course fix such a strictly 
increasing sequence $g=(g_n)$ and consider the above machinery for subsequences 
$\Ab_g=\{A_{g_n}\}$ instead. In an analogous way to $\Fc$ and $\Fc^T_g$ 
we define $\Fc_g$  as the set of all bounded sequences
$\Ab_g$ and $\Fc^T_g$ as the set of all $\Ab_g\in\Fc_g$ for which all limits
$\Ws^t(\Ab_g):=\lim_{n\to\infty}(E_{g_n}^t)^{-1}(A_{g_n})$ exist. Clearly, 
the analogous conditions (I) and (II) hold. Define, further, the set $\Jc^T_g$ 
of ``compact subsequences'' and the notion of $\Jc^T_g$-Fredholmness.  
Of course, Theorem \ref{TTstructMain} can be translated 
to this subsequence framework.

\medskip
Here is now the next step:
A sequence $\Ab\in\Fc$ is rich if every subsequence of $\Ab$ has a $T$-structured
subsequence $\Ab_g=\{A_{g_n}\}$, i.e. $\Ab_g\in\Fc^T_g$. Denote the set of all rich
sequences by $\Rc^T$. 
Furthermore, for $\Ab\in\Rc^T$ we denote by $\Hb_\Ab$ the set of all strictly increasing
$g:\Nb\to\Nb$ for which $\Ab_g\in\Fc^T_g$. Also, all snapshots of $T$-structured 
subsequences of $\Ab$ are referred to as snapshots of $\Ab$.	

\begin{thm}\label{TRich}
It holds that
\begin{enumerate}
\item $\Fc\supset\Rc^T\supset\Fc^T$ are Banach algebras and $\Rc^T$ is inverse closed 
			in $\Fc$.
\item If for $\Ab=\{A_n\}\in\Rc^T$ every $T$-structured subsequence has a regularly
			$\Jc^T$-Fredholm subsequence then $\Ab$ has finite $\alpha$- and $\beta$-number 
			$\alpha(\Ab)$, $\beta(\Ab)$, i.e.
			\begin{align*}
			\liminf_{n\to\infty}& s^r_{\alpha(\Ab)}(A_n)=0, 
				&\liminf_{n\to\infty}& s^l_{\beta(\Ab)}(A_n)=0,\\
			\liminf_{n\to\infty}& s^r_{\alpha(\Ab)+1}(A_n)>0, 
				&\liminf_{n\to\infty}& s^l_{\beta(\Ab)+1}(A_n)>0,
			\end{align*}
			where
			\begin{align*}
			\alpha(\Ab)=\max_{h\in\Hb_\Ab}\sum_{t\in T}\dim\ker \Ws^t(\Ab_h),\quad
			\beta(\Ab)=\max_{h\in\Hb_\Ab}\sum_{t\in T}\dim\coker \Ws^t(\Ab_h).
			\end{align*} 
\item If one snapshot of $\Ab\in\Rc^T$ is not Fredholm then $\Ab$ cannot have both, 
			finite $\alpha$- and $\beta$-number, i.e. for every $k\in\Nb$
			\[\liminf_{n\to\infty} \min\{s^r_{k}(A_n), s^l_{k}(A_n)\} =0.\]
\item For $\Ab\in\Rc^T$ the following are equivalent
			\begin{enumerate}
			\item $\Ab$ is stable.
			\item Every $T$-structured subsequence of $\Ab$ is stable.
			\item Every $T$-structured subsequence of $\Ab$ has a stable subsequence.
			\item Every $T$-structured subsequence of $\Ab$ is $\Jc^T$-Fredholm 
						and all snapshots of $\Ab$ are invertible.
			\item Every $T$-structured subsequence of $\Ab$ has a $\Jc^T$-Fredholm 
						subsequence and all snapshots of $\Ab$ are invertible.
			\item $\alpha(\Ab)$ and $\beta(\Ab)$ exist and are both equal to zero. 
			\end{enumerate}
\end{enumerate}
\end{thm}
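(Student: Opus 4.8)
The entire argument is a reduction to Theorem~\ref{TTstructMain} in its subsequence version, using two elementary facts throughout: the numbers $s_k^r,s_k^l$ are non-decreasing in $k$, and a real sequence tending to $0$ does so along every subsequence. The recurring move is: given an assertion about the rich but possibly non-structured sequence $\Ab$, extract (by richness) a $T$-structured subsequence $\Ab_h$, $h\in\Hb_\Ab$, and apply the matching part of Theorem~\ref{TTstructMain}. For part~(1) the inclusions are immediate, since every subsequence of a sequence from $\Fc^T$ is again $T$-structured. That $\Rc^T$ is a subalgebra follows from a double extraction: given $\Ab,\Bb\in\Rc^T$ and any subsequence, first thin it so that $\Ab$ becomes structured, then thin again so that $\Bb$ does; on the result both limits exist, and since each $\Ws^t$ is an algebra homomorphism so do the limits for $\Ab+\Bb$ and $\Ab\Bb$. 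Completeness is a diagonal argument: for $\Ab^{(k)}\to\Ab$ in $\Fc$ with $\Ab^{(k)}\in\Rc^T$, build nested subsequences structuring $\Ab^{(1)},\Ab^{(2)},\dots$ in turn, pass to the diagonal, and use condition~(I) together with the completeness of $\lb{\Xb^t}$ in a $3\epsilon$-estimate to force $(E_n^t)^{-1}(A_n)$ to converge along the diagonal. Inverse closedness rests on the standard fact that a $\Pc^t$- or $*$-strongly convergent sequence of uniformly invertible operators has invertible limit and inverses converging in the same sense: if $\Ab\in\Rc^T$ is invertible in $\Fc$, then along any $T$-structured subsequence the snapshots are invertible with $\Ws^t(\Ab_h^{-1})=(\Ws^t(\Ab_h))^{-1}$, so $\Ab_h^{-1}$ is structured and $\Ab^{-1}\in\Rc^T$.

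Part~(2) is the technical core, and I would route it through the dictionary
\[\liminf_{n\to\infty} s_k^r(A_n)=0\quad\Longleftrightarrow\quad\exists\,h\in\Hb_\Ab:\ \sum_{t\in T}\dim\ker\Ws^t(\Ab_h)\geq k.\]
For ``$\Leftarrow$'' pass to a regularly $\Jc^T$-Fredholm subsequence of $\Ab_h$ (granted by the hypothesis), whose snapshots equal those of $\Ab_h$; Theorem~\ref{TTstructMain}(1) gives $s_{\alpha(\Ab_h)}^r(A_{h_n})\to0$, and monotonicity yields $s_k^r\to0$ along this subsequence. For ``$\Rightarrow$'' take a subsequence realizing $s_k^r(A_{n_j})\to0$, thin it to a $T$-structured and then a regularly $\Jc^T$-Fredholm subsequence $\Ab_g$; if $k$ exceeded $\alpha(\Ab_g)=\sum_{t}\dim\ker\Ws^t(\Ab_g)$, then Theorem~\ref{TTstructMain}(1) would force $\liminf_n s_{\alpha(\Ab_g)+1}^r(A_{g_n})>0$, contradicting $s_k^r(A_{g_n})\to0$. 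The dictionary identifies $\alpha(\Ab)$ as the largest $k$ with $\liminf_n s_k^r(A_n)=0$, which yields both stated $\alpha$-identities; the $\beta$-statements follow verbatim with $s_k^l$ and $\coker$. Finiteness of $\alpha(\Ab)$ needs one extra diagonal argument: were it infinite, then $\liminf_n s_k^r(A_n)=0$ for all $k$, and picking $n_k$ with $s_k^r(A_{n_k})<1/k$ and extracting a regularly $\Jc^T$-Fredholm subsequence $\Ab_g$ of $\{A_{n_k}\}$ with finite $\alpha(\Ab_g)$ would give $s_{\alpha(\Ab_g)+1}^r(A_{g_n})\to0$ by monotonicity, again contradicting Theorem~\ref{TTstructMain}(1).

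Part~(3) is immediate: a non-Fredholm snapshot of $\Ab$ is, by definition, a snapshot $\Ws^{t_0}(\Ab_h)$ of some $T$-structured subsequence, and Theorem~\ref{TTstructMain}(2) applied to $\Ab_h$ gives $\min\{s_k^r(A_{h_n}),s_k^l(A_{h_n})\}\to0$, so the $\liminf$ over $\Nb$ vanishes for every $k$. For part~(4) I would prove the cycle (a)$\Rightarrow$(b)$\Rightarrow$(c)$\Rightarrow$(e)$\Rightarrow$(a) together with (b)$\Rightarrow$(d)$\Rightarrow$(e). Here (a)$\Rightarrow$(b) and (b)$\Rightarrow$(c) are trivial, as subsequences of a stable sequence are stable; (b)$\Rightarrow$(d) and (c)$\Rightarrow$(e) use Theorem~\ref{TTstructMain}(3) (a stable sequence is $\Jc^T$-Fredholm with invertible snapshots); and (d)$\Rightarrow$(e) is trivial. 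The decisive step (e)$\Rightarrow$(a) is by contradiction: an unstable $\Ab$ admits a subsequence no sub-subsequence of which is stable; thinning it to a $T$-structured $\Ab_{g'}$, passing to the $\Jc^T$-Fredholm subsequence granted by (e), and invoking Theorem~\ref{TTstructMain}(3) (all snapshots being invertible) produces a stable sub-subsequence, a contradiction. Finally, (f) is linked through the $k=1$ case of the dictionary from part~(2): the existence of $\alpha(\Ab)$ and $\beta(\Ab)$ places us in the setting of part~(2), where $\alpha(\Ab)=\beta(\Ab)=0$ reads precisely as $\liminf_n s_1^r(A_n)>0$ and $\liminf_n s_1^l(A_n)>0$, and this is equivalent to stability by the standard criterion that uniform positivity of $s_1^r$ and $s_1^l$ characterises uniform invertibility.

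\emph{The main obstacle.} The genuine work sits in part~(2): both the direction ``$\Rightarrow$'' of the dictionary and the finiteness of $\alpha(\Ab)$ (hence the existence of the maxima defining $\alpha$ and $\beta$) depend on correctly interleaving the extraction of a $T$-structured subsequence, the extraction of a regularly $\Jc^T$-Fredholm sub-subsequence, and the monotonicity of $s_k^r$, so that the lower splitting bound of Theorem~\ref{TTstructMain}(1) can be played off against approximation numbers tending to $0$. The second delicate point is the completeness and inverse closedness in part~(1), where the diagonal construction has to be combined with the $\Pc^t$- (resp.\ $*$-)strong inversion lemma; the remainder is routine subsequence bookkeeping layered on top of Theorem~\ref{TTstructMain}.
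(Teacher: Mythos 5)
Your proposal is correct and follows essentially the route the paper itself takes: the paper gives no proof of Theorem \ref{TRich} but defers to \cite[Theorem 8]{MaSaSe2014} and \cite[Theorem 2.26, Corollary 2.27]{SeSi2012}, whose arguments are precisely your reduction to Theorem \ref{TTstructMain} via iterated extraction of $T$-structured and regularly $\Jc^T$-Fredholm subsequences, combined with the monotonicity of the approximation numbers. The only point I would phrase more carefully is the inversion step in part (1): for the $\Pc^t$-strong case the invertibility of the limit of a uniformly invertible sequence is best obtained from Theorem \ref{TTstructMain}(3) (stability implies invertible snapshots) rather than quoted as a free-standing fact, after which the $\Pc^t$-strong convergence of the inverses follows from the identity $C_n^{-1}-W^{-1}=W^{-1}(W-C_n)C_n^{-1}$ and the inverse closedness of $\Lc(\Xb^t,\Pc^t)$.
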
 

\subparagraph{Some comments about the proofs}
A thorough exposition of this model with full proofs and applications can be 
found in \cite[Section 2]{SeidelDiss}, even in a slightly more general form. 
However, the results on $T$-structured sequences are also introduced and proved 
in \cite[Section 2]{SeSi2012}. Theorem \ref{TRich} (except 3.) can be proved  
as was done in \cite[Theorem 8]{MaSaSe2014}. Its 3rd assertion immediately follows
from \cite[Theorem 2.26 and Corollary 2.27]{SeSi2012}. 

\subsection{The sequence algebra framework for convolution type operators}\label{SSAlgConv}
Now we come back to our initial problem, the finite section method for convolution
type operators on $\Xb:=L^p(\Rb)$, and we apply the general theory of the previous section to
this concrete situation. This actually means that we are aiming for a framework
which includes the algebra
\footnote{Recall that $\APer\subset\BUC$.}
\begin{equation}\label{EAc}\begin{split}
\Ac:=\alge&\left\{ \{aI\},\,\{W^0(b)\},\,\{K\},\,\{P_n\}:\right. \\
 & \;\;\left. a\in[\PC,\SO,L_0^{\infty}],\; b\in[\PC,\SO,\BUC]_p,
 \;K\in\pc{X}\right\}.
\end{split}
\end{equation}
For this we set $T:=T_\Pc\cup T_*$ with $T_\Pc:=\{-,c,+\}$ and $T_*:=\Rb$, and further
$\Xb^t:=L^p(\Rb)$ for every $t\in T$, where $\Pc^t:=\Pc=(P_n)=(\chi_{[-n,n]}I)$
if $t\in T_\Pc$. Then,  $\Fc^T$  is  the set of all $\Ab=\{A_n\}\in\Fc$ 
for which the $\Pc$-strong limits
\begin{align*}
\Ws^c(\Ab):=\plimn A_n,\quad \Ws^-(\Ab):=\plimn V_nA_nV_{-n},\quad
\Ws^+(\Ab):=\plimn V_{-n}A_nV_n
\end{align*}
and, for every $t\in T_*$, the $*$-strong limits
\[\Hs^t(\Ab):=\slim Z_n^{-1} U_t A_n U_{-t} Z_n\]
exist, where we recall the homomorphisms
\begin{align*}
Z_n: (Z_nu)(x)= n^{-1/p} u(x/n),\quad
U_t: (U_t u)(x)= e^{\ii tx} u(x) \quad\text{on}\quad L^p(\Rb)
\end{align*}
from \cite{KMS2010}.
Clearly, we have to ensure that 
\begin{lem}\label{LSepC}
The Conditions {\upshape (I)} and {\upshape (II)} are satisfied. 
\end{lem}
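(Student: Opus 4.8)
The plan is to read off the isomorphisms $E_n^t$ from the prescribed snapshots and then check the two conditions by hand. Since $\Ws^c(\Ab)=\plimn A_n$, $\Ws^-(\Ab)=\plimn V_nA_nV_{-n}$, $\Ws^+(\Ab)=\plimn V_{-n}A_nV_n$ and $\Hs^t(\Ab)=\slim Z_n^{-1}U_tA_nU_{-t}Z_n$, the transformed copies $(E_n^t)^{-1}(A_n)$ force $E_n^c=\mathrm{id}$, $E_n^-(B)=V_{-n}BV_n$, $E_n^+(B)=V_nBV_{-n}$ and $E_n^t(B)=U_{-t}Z_nBZ_n^{-1}U_t$ for $t\in\Rb$. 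Each is conjugation by an invertible operator, and the decisive observation for Condition (I) is that $V_s$, $U_t$ and $Z_n$ are all \emph{invertible isometries} of $L^p(\Rb)$: for $U_t$ because $|e^{\ii tx}|=1$, and for $Z_n$ by the substitution $y=x/n$. The inverse of a surjective isometry is again isometric, so conjugation by any of these is an algebra isomorphism of norm exactly $1$; hence $\sup_n\{\|E_n^t\|,\|(E_n^t)^{-1}\|\}=1$ and Condition (I) holds trivially.

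For Condition (II) I would compute $\Ws^\tau\{E_n^t(K^t)\}$ as the relevant ($\Pc$- or $*$-strong) limit of the composite $(E_n^\tau)^{-1}E_n^t(K^t)$; making these limits explicit simultaneously shows $\{E_n^t(K^t)\}\in\Fc^T$ and yields the asserted values. The diagonal $\tau=t$ is immediate, since then $(E_n^\tau)^{-1}E_n^t=\mathrm{id}$ and the sequence is constant $\equiv K^t$, whose $\Pc$-strong (resp. $*$-strong) limit is $K^t$. For the off-diagonal entries the strategy is to use $V_aV_b=V_{a+b}$ and $Z_n^{-1}U_sZ_n=U_{ns}$ to bring each composite into a canonical shape and then apply a matching \emph{escape} principle: for $\tau,t\in T_\Pc$ it is $V_{s_n}K^tV_{-s_n}$ with $s_n\in\{\pm n,\pm2n\}$; for $\tau,t\in T_*$, $\tau\neq t$, it is $U_{n(\tau-t)}K^tU_{-n(\tau-t)}$; for $\tau\in T_\Pc,\,t\in T_*$ it is $U_{-t}Z_nK^tZ_n^{-1}U_t$; and for $\tau\in T_*,\,t\in T_\Pc$ it is $Z_n^{-1}(U_\tau K^tU_{-\tau})Z_n$. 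In the last two cases the fixed factors $U_{\pm t}$, $U_{\pm\tau}$ are multiplication operators, hence in $\pb{\Xb}$, and since $\pc{\Xb}$ is an ideal they may be absorbed into the test operator or into $K^t$ without leaving the relevant ($\Pc$-)compact class.

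The three escape principles are the analytic core. \emph{Shift escape}: if $K\in\pc{\Xb}$ and $s_n\to\pm\infty$ then $V_{s_n}KV_{-s_n}\to0$ $\Pc$-strongly; one approximates $K$ by $P_mKP_m$ and notes that $V_{s_n}P_mV_{-s_n}=\chi_{[s_n-m,\,s_n+m]}I$ is a window whose support escapes to infinity, so that $\|L(I-P_N)\|,\|(I-P_N)L\|\to0$ (valid for every $L\in\pc{\Xb}$) annihilate the product. \emph{Modulation escape}: if $K$ is compact and $|s_n|\to\infty$ then $U_{s_n}KU_{-s_n}\to0$ $*$-strongly; reducing $K$ in norm to a rank-one operator $u\mapsto\langle u,\phi\rangle\psi$ (pairing $\langle u,\phi\rangle=\int u\phi$), the strong limit is governed by $\langle u,e^{-\ii s_n\cdot}\phi\rangle=\widehat{u\phi}(s_n)\to0$ by the Riemann--Lebesgue lemma, since $u\phi\in L^1(\Rb)$. \emph{Dilation escape}: for compact $K$ one has $Z_nKZ_n^{-1}\to0$ $\Pc$-strongly, and for $K\in\pc{\Xb}$ one has $Z_n^{-1}KZ_n\to0$ $*$-strongly; again reducing to rank one, the single estimate needed is $\|P_kZ_n\psi\|_p^p=\int_{-k/n}^{k/n}|\psi|^p\to0$, i.e. the mass of the dilated profile leaves every fixed window $[-k,k]$, which together with the $\Pc$-compactness of the test operator closes the argument (and dually on $L^q$ for the $*$-strong statement).

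The part I expect to be the main obstacle is not any individual limit but the \emph{robustness} of the dilation escape in the mixed cases with $\tau=\pm$ (and $t\in T_*$) or $t=\pm$ (and $\tau\in T_*$). There the composite carries an $n$-dependent shift $V_{\pm n}$ sitting next to the dilation, e.g. $V_nU_{-t}Z_nKZ_n^{-1}U_tV_{-n}$, and one cannot absorb a fixed $\Pc$-compact test operator in order to quote the escape principle verbatim. I would therefore run the rank-one reduction on the whole composite, write it as $u\mapsto\langle u,\eta_n\rangle\xi_n$ with $\xi_n=V_nU_{-t}Z_n\psi$, and observe that the only property used, namely $\|P_k\xi_n\|_p\to0$, survives: $\xi_n$ is a unimodular multiple of a translate of $Z_n\psi$, so its $L^p$-mass over $[-k,k]$ is still the integral of $|\psi|^p$ over an interval shrinking to a point. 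The symmetric remark on the $L^q$-side handles $\eta_n$, using that the adjoint of a $\Pc$-compact operator is compact-like for the analogous characteristic-function projections on $L^q$. Everything else is finite bookkeeping over the pairs in $T_\Pc\times T_\Pc$, $T_\Pc\times T_*$ and $T_*\times T_\Pc$, together with the continuum $T_*\times T_*$, each of which falls under exactly one of the three principles above.
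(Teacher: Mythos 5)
Your proof is correct and follows essentially the same route as the paper: Condition (I) via the isometry of $V_s$, $U_t$, $Z_n$, and Condition (II) by reading off the $E_n^t$, computing the composites $(E_n^\tau)^{-1}E_n^t$ case by case, and killing the off-diagonal limits by escape-to-infinity (shifts, modulations via Riemann--Lebesgue) and shrink-to-a-point (dilations, possibly displaced by $V_{\pm n}$) arguments. The only differences are presentational: the paper delegates several of the individual limits to \cite{MaSaSe2014}, \cite[Lemma 7.4(b)]{KMS2010} and \cite[Proposition 4.3]{DPSS2014} where you prove them directly, and in your dilation escape for $\Pc$-compact (not necessarily compact) $K$ the ``rank-one reduction'' should be replaced by the approximation $K\approx P_mKP_m$ that you already employ in the shift escape --- the decisive estimate $\|P_kZ_nu\|_p\to 0$ is unchanged.
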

\begin{proof}
Since all $V_n$, $Z_n$ and $U_t$ are isometric isomorphisms, the first condition
is obvious. For the second one we start with $K:=P_m=\chi_{[-m,m]}I$, and recall 
from \cite{MaSaSe2014} that for the sequence $\{E_n^c(K)\}=\{K\}$
the $\Ws^\pm$-snapshots are zero. Further, all $\Hs^t$-snapshots are zero by
\cite[Lemma 7.4.(b)]{KMS2010}. For $\{K\}$ with arbitrary $K\in\pc{X}$ 
just use the observation $K=P_mKP_m+K(I-P_m)+(I-P_m)KP_m$. Then, given any 
prescribed $\epsilon>0$, choose and fix $m$ sufficiently large such that 
the second and third summand get less than $\epsilon$ and observe the desired 
convergence for the first summand. So, $\Ws^\pm(\{K\})=0$ and $\Hs^t(\{K\})=0$,
$t\in\Rb$, easily follow.  
Liftings $\{E_n^\pm(K)\}=\{V_{\pm n} K V_{\mp n}\}$ are treated similarly.

Now, for $t\in\Rb$ consider $\{J_n\}=\{U_{-t} Z_n K Z_n^{-1} U_t\}$
with  $K$ a compact operator. The limits $\Hs^s\{J_n\}=0$ for $s\neq t$ were shown 
in \cite[Proposition 4.3]{DPSS2014}.
Next,
\[\|P_m U_{-t} Z_n K Z_n^{-1} U_t\| \leq
\|U_{-t} Z_n Z_n^{-1} U_t P_m U_{-t} Z_n K \| 
\leq \|Z_n^{-1} U_t P_m U_{-t} Z_n K\| \to 0\]
as $n\to\infty$, for every $m$, as discussed above. By duality, $\Ws^c\{J_n\}=0$ 
follows. The snapshots $\Ws^\pm\{J_n\}=0$ are proved analogously and the rest
is straightforward.
\end{proof}

Thus, we have the above Theorems \ref{TTstructMain} and \ref{TRich} available, and 
the rest of this section is devoted to the proof that this applies to all sequences
in $\Ac$:
\begin{prop}
$\Ac\subset\Rc^T$.
\end{prop}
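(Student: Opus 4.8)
The plan is to exploit that $\Rc^T$ is a norm-closed Banach subalgebra of $\Fc$ (Theorem \ref{TRich}.1), so that $\Ac=\alge\{\dots\}\subset\Rc^T$ follows as soon as each of the generating sequences is shown to be rich. The two maps $a\mapsto\{aI\}$ on $[\PC,\SO,L_0^\infty]$ and $b\mapsto\{W^0(b)\}$ on $[\PC,\SO,\BUC]_p$ are bounded algebra homomorphisms into $\Fc$, so $\{a:\{aI\}\in\Rc^T\}$ and $\{b:\{W^0(b)\}\in\Rc^T\}$ are closed subalgebras of the respective symbol algebras. Hence it suffices to verify richness only for $a,b$ running through generating sets, namely $C(\overline{\Rb})$, the jumps $\chi_{[\lambda,\infty)}$, $L_0^\infty$ and the $\SO^\lambda$ on the multiplication side, and the $\PC$-, $\SO$-, $\BUC$-generators on the convolution side, together with the two further generators $\{P_n\}$ and $\{K\}$, $K\in\pc{X}$.

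I would first dispatch the sequences that are already $T$-structured. For $\{P_n\}$ one computes directly $\Ws^c\{P_n\}=I$, $\Ws^-\{P_n\}=\chi_{[0,\infty)}I$, $\Ws^+\{P_n\}=\chi_{(-\infty,0]}I$ and, since $Z_n^{-1}P_nZ_n=P_1$, $\Hs^t\{P_n\}=P_1$, all limits existing. For $\{K\}$ the snapshots were already identified in the proof of Lemma \ref{LSepC} ($\Ws^c=K$, the rest $0$). For a convolution $\{W^0(b)\}$ shift invariance gives $V_{\pm n}W^0(b)V_{\mp n}=W^0(b)$, whence $\Ws^-=\Ws^+=\Ws^c=W^0(b)$, which lies in $\pb{X}$ by Corollary \ref{CConvClass}; the snapshot $\Hs^t$ conjugates the symbol by $U_t$ and dilates it, so that it localizes $b$ at the frequency $t\in\Rb$, equalling $b(t)I$ wherever $b$ is continuous and a model operator built from $I$ and the Cauchy singular integral operator $S$ at a $\PC$-jump. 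Thus every convolution generator lies in $\Fc^T\subset\Rc^T$.

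It remains to treat the multiplication generators, where the only genuinely non-$T$-structured case appears. Here $\Ws^\mp\{aI\}=\plim a(\cdot\mp n)I$ and $\Hs^t\{aI\}=\slim a(n\cdot)I$ (independent of $t$, as $U_t$ commutes with $aI$). For $a\in C(\overline{\Rb})$, for $\chi_{[\lambda,\infty)}$, for $a\in L_0^\infty$ and for $a\in\SO^\lambda$ with $\lambda$ finite, the one-sided limits of $a$ at $\pm\infty$ exist, so all three limits exist and $\{aI\}\in\Fc^T$. The remaining case $a\in\SO^\infty$ is where richness is genuinely needed: the slow-oscillation estimate shows that $a(\cdot\mp n)$ is, on every bounded interval, uniformly close to the constant $a(\mp n)$, and $a(n\cdot)$ is, on every interval bounded away from $0$, uniformly close to $a(n)$ (for positive arguments) resp.\ $a(-n)$ (for negative arguments), so that all snapshots are controlled by the two bounded scalar sequences $(a(n))$ and $(a(-n))$. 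Given any subsequence, Bolzano--Weierstrass produces a further subsequence $h$ along which $a(h_k)\to\gamma_+$ and $a(-h_k)\to\gamma_-$; along $h$ one obtains $\Ws^-=\gamma_-I$, $\Ws^+=\gamma_+I$ ($\Pc$-strongly, using $\|K(I-P_m)\|\to0$ to absorb the tails) and $\Hs^t=(\gamma_+\chi_{[0,\infty)}+\gamma_-\chi_{(-\infty,0]})I$ ($*$-strongly), for every $t$. Hence $\{aI\}\in\Rc^T$, and assembling all generators in the closed algebra $\Rc^T$ yields $\Ac\subset\Rc^T$.

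The main obstacle is the existence and identification of the $\Hs^t$-snapshots for convolutions with piecewise continuous symbols: this is a $*$-strong convergence statement for Fourier multipliers under modulation and dilation, which is considerably more delicate in $M^p$ than the mere pointwise a.e.\ convergence of the symbols $b(t+\cdot/n)$, and I would take it over from the $\PC_p$-multiplier theory of \cite{KMS2010} that is already built into the $Z_n,U_t$-construction. By contrast, on the multiplication side the only subtlety is that $\SO^\infty$-symbols fail to be $T$-structured and must be handled by the richness argument, but this collapses to the compactness of the bounded scalar sequences $(a(\pm n))$ combined with the slow-oscillation estimate.
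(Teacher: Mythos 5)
Your overall strategy --- reduce to the generators via the closedness of $\Rc^T$ in $\Fc$ and then check each generator --- is exactly the paper's. But there is a genuine gap in your treatment of the convolution generators. You assert that ``every convolution generator lies in $\Fc^T$'', handling only the continuous symbols (snapshot $b(t)I$) and the $\PC$-jumps (a combination of $I$ and $S$). The symbol algebra $[\PC,\SO,\BUC]_p$ also contains the generators $b\in\SO^\lambda$ with $\lambda\in\Rb$ finite, i.e.\ multipliers that oscillate slowly but have no one-sided limits at a finite point $\lambda$; these are neither piecewise continuous nor uniformly continuous, so they are not absorbed by the cases you treat. For such $b$ the snapshot $\Hs^\lambda$ would be the $*$-strong limit of $W^0(b(\lambda+\cdot/n))$, which does not exist for the full sequence: exactly as for $\SO^\infty$ on the multiplication side, $\{W^0(b)\}$ is only \emph{rich}, not $T$-structured, and a Bolzano--Weierstrass/subsequence argument is unavoidable here as well (this is the content of the last part of Lemma \ref{LW0bComm}). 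Since this is arguably the hardest generator, your claim that the multiplication side contains ``the only genuinely non-$T$-structured case'' leaves the main difficulty unaddressed.

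A second, related point: your reduction to generating sets is clean for the multiplication symbols (where the relevant norm is $\|\cdot\|_\infty$ and $[\PC,\SO,L^\infty_0]$ is by definition the closed algebra they generate), but for the multipliers with $p\neq 2$ the algebra $[\PC,\SO,\BUC]_p$ is defined as the $M^p$-closure of $[\PC,\SO,\BUC]\cap M^{<p>}$, which is not obviously the closed subalgebra of $M^p$ generated by the individual $\PC$-, $\SO$- and $\BUC$-generators. Accordingly, the paper does not reduce to generators in $M^p$: it proves the $p=2$ case on generators and then, for arbitrary $b\in[\PC,\SO,\BUC]\cap M^{<p>}$, transfers the existence of the $\Hs^t$-limits from $L^2$ to $L^p$ by a Riesz--Thorin interpolation argument built around the cut-off multipliers $d_m^3$. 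Your deferral to the $\PC_p$-theory of \cite{KMS2010} covers the pure $\PC$ case but not this general interpolation step. (A minor further remark: with the paper's definition of $\SO^\infty$, where the oscillation is taken over $[-x,-rx]\cup[rx,x]$, the two constants $\gamma_\pm$ you extract necessarily coincide, so the $\Ws^\pm$- and $\Hs^t$-snapshots of $\{aI\}_h$ are multiples of the identity; this does not affect the validity of your richness argument.)
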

In fact, all we have to do is to check that the generators of $\Ac$ are rich. 
For $\{K\}$ it is already clear by Lemma \ref{LSepC}, and the remaining cases 
are treated in the subsequent lemmas, where we also compute their snapshots.
Define $\chi_-:=\chi_{(-\infty,0]}$ and $\chi_+:=\chi_{[0,+\infty)}$.

\begin{lem}\label{LRMultOps}
For every $a\in L^\infty(\Rb)$ it holds that $\Ws^c\{aI\}=aI$.\\
For $a\in L^\infty_0\cup\PC\cup\bigcup_{\lambda\in\Rb}\SO^\lambda$ we have 
$\{aI\}\in\Fc^T$ where, for all $t\in\Rb$,
\begin{align*}
\Ws^-\{aI\}=a(-\infty)I,\quad \Ws^+\{aI\}=a(+\infty)I,
\quad \Hs^t\{aI\}=(a(-\infty)\chi_-+a(+\infty)\chi_+)I.
\end{align*}
For $a\in\SO^\infty$ it holds that $\{aI\}\in\Rc^T$, where all snapshots 
$\Ws^t(\{aI\}_g)$, $g\in\Hb_{\{aI\}}$, $t\in T\setminus\{c\}$, are multiples of
the identity.
\end{lem}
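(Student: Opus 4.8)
The plan is to show that, although $\{aI\}$ is in general \emph{not} $T$-structured as a full sequence---because an $a\in\SO^\infty$ need not possess limits $a(\pm\infty)$---slow oscillation nevertheless forces it to be $T$-structured along suitably chosen subsequences, with every non-central snapshot equal to one and the same scalar multiple of the identity. First I would record the transformed copies. Since multiplication operators commute with the modulations $U_t$, one has $U_t(aI)U_{-t}=aI$, and a direct computation gives
\begin{align*}
V_n(aI)V_{-n}=a(\cdot-n)I,\quad V_{-n}(aI)V_n=a(\cdot+n)I,\quad Z_n^{-1}U_t(aI)U_{-t}Z_n=a(n\,\cdot)I.
\end{align*}
In particular the $\Hs^t$-copy is independent of $t$.

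Next I would extract from slow oscillation the two facts that drive everything. Fixing $r=1/2$ (resp.\ $r=\delta^2$) in the defining condition for $\SO^\infty$ and taking $x=n+m$ (resp.\ $x=n/\delta$) yields, for every fixed $m$ and $0<\delta<1$,
\[\osc(a,[\pm n-m,\pm n+m])\to0\quad\text{and}\quad\osc(a,[n\delta,n/\delta])\to0\]
as $n\to\infty$, so that on every fixed compact set each of the three transformed functions above is uniformly close to $a(n)$ or $a(-n)$ (according to the sign of the argument). Crucially, because the oscillation is taken over the \emph{union} $[-x,-rx]\cup[rx,x]$, the same estimate with $r=1/2$, $x=n$ also gives $|a(n)-a(-n)|\to0$: the values of $a$ near $+\infty$ and near $-\infty$ are asymptotically \emph{equal}. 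The bounded scalar sequence $(a(n))$ need not converge, but by Bolzano--Weierstrass every subsequence admits a further subsequence $g\in\Hb_{\{aI\}}$ along which $a(g_n)\to c$ for some $c\in\Cb$, and by the coupling just noted $a(-g_n)\to c$ as well.

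Finally I would identify the three snapshot limits along such a $g$. For $\Ws^\mp$ it suffices, since every element of $\pc{X}$ is approximable by $P_m(\cdot)P_m$, to estimate
\[\|P_m(a(\cdot\mp g_n)-c)I\|=\sup_{|x|\le m}|a(x\mp g_n)-c|\le\osc(a,[\mp g_n-m,\mp g_n+m])+|a(\mp g_n)-c|\to0,\]
whence $\Ws^-(\{aI\}_g)=\Ws^+(\{aI\}_g)=cI$ in the $\Pc$-strong sense. For $\Hs^t$ one checks $*$-strong convergence of $a(g_n\,\cdot)I$ to $cI$: given $f\in L^p(\Rb)$ and $\epsilon>0$ pick $0<\delta<1$ so that the $L^p$-mass of $f$ on $\{|x|<\delta\}\cup\{|x|>1/\delta\}$ is below $\epsilon$; on $\delta\le|x|\le1/\delta$ one has $|a(g_nx)-c|\to0$ uniformly by the annulus estimate together with $a(\pm g_n)\to c$, while elsewhere $|a(g_n\,\cdot)-c|$ stays bounded, giving $\|(a(g_n\,\cdot)-c)f\|_p\to0$; the adjoint is the analogous multiplication on $L^q(\Rb)$ and is handled identically. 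Together with $\Ws^c\{aI\}=aI$ (already established) this shows $\{aI\}_g\in\Fc^T_g$, hence $\{aI\}\in\Rc^T$, and exhibits each snapshot with $t\in T\setminus\{c\}$ as the scalar operator $cI$.

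I expect the main obstacle to lie in the middle paragraph: the $\SO^\infty$ bookkeeping that pins down, uniformly on compacta, the collapse of all three transformed functions to a \emph{single} scalar, and in particular the coupling $|a(n)-a(-n)|\to0$ that makes the $\Hs^t$-snapshot a genuine multiple of the identity rather than the two-sided multiplier $c_-\chi_-+c_+\chi_+$. Once that annulus estimate is in hand, the two convergence verifications are routine approximations against $\pc{X}$ and against $L^p$-truncations.
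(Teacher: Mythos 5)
Your treatment of the $\SO^\infty$ case is correct and is essentially the paper's own argument: the paper likewise reduces $\Hs^t$ to the dilated multiplications $a(h_n\cdot)I$, extracts a subsequence by Bolzano--Weierstrass along which $a(l_nx_0)$ converges, and uses the vanishing oscillation over the two-sided sets $[-x,-rx]\cup[rx,x]$ to get uniform convergence on compacta away from the origin. (The paper outsources the existence of the $\Ws^\pm$-limits to \cite{Li2006}/\cite{MaSaSe2014}, whereas you prove it directly; your explicit remark that the union in the oscillation condition couples $a(n)$ to $a(-n)$, so that all non-central snapshots share one and the same constant $c$, is a correct and slightly sharper conclusion than the one stated.) The first assertion, $\Ws^c\{aI\}=aI$, is indeed immediate because the sequence is constant.

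The genuine gap is that you never prove the second assertion: for $a\in L^\infty_0\cup\PC\cup\bigcup_{\lambda\in\Rb}\SO^\lambda$ the \emph{full} sequence $\{aI\}$ lies in $\Fc^T$ with $\Hs^t\{aI\}=(a(-\infty)\chi_-+a(+\infty)\chi_+)I$. Your machinery cannot deliver this as written: it produces scalar snapshots $cI$ precisely because of the coupling $|a(n)-a(-n)|\to0$, which fails for, say, $a=\chi_+$. You even name the two-sided multiplier $c_-\chi_-+c_+\chi_+$ as the alternative outcome, but give no argument for it. The paper's route is the decomposition $a=a(-\infty)\chi_-+a(+\infty)\chi_++a_0$ with $a_0\in L^\infty_0$: then $a_0I$ is $\Pc$-compact, so $\{a_0I\}\in\Jc^T$ and all its $\Hs$-snapshots vanish by the separability condition (Lemma \ref{LSepC}), while $\chi_\pm$ are invariant under the dilations $Z_n$, whence $\Hs^t\{\chi_\pm I\}=\chi_\pm I$. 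Alternatively, your own annulus estimate, run separately on the positive and negative halves of the annulus $\delta\le|x|\le 1/\delta$, gives $a(n\cdot)I\to(a(-\infty)\chi_-+a(+\infty)\chi_+)I$ $*$-strongly for the full sequence. Either way, this step must be written out; as it stands only the first and third assertions of the lemma are established.
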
 
\begin{proof}
The $\Ws$-snapshots for all functions under consideration are almost obvious or 
have already been discussed in \cite{MaSaSe2014}.

Since $a\in L^\infty_0$ yields $aI\in\Kc^c=\pc{\Xb}$, hence $\{aI\}\in\Jc^T$, we
see that all $\Hs$-snapshots are zero by Lemma \ref{LSepC}.

Functions $a\in \PC\cup\bigcup_{\lambda\in\Rb}\SO^\lambda$ can be decomposed  
$a=a(-\infty)\chi_-+a(+\infty)\chi_++a_0$ with $a_0\in L^\infty_0$, and 
$\chi_\pm$ are tackled in \cite[Proposition 7.4.b]{KMS2010}.

Finally, let $a\in \SO^\infty$ and $g:\Nb\to\Nb$ be strictly increasing. 
From \cite[Prop. 3.39]{Li2006} or \cite{MaSaSe2014} it is already known that 
there is a subsequence $h\subset g$
such that the $\Ws$-snapshots of $\{aI\}_h$ exist. It remains to show that for 
a certain subsequence $l$ of $h$ one also has the existence of all $\Hs^t$, $t\in\Rb$. 
For this we consider
\[(Z_{h_n}^{-1} U_t a U_{-t} Z_{h_n} u)(x)= (Z_{h_n}^{-1} a Z_{h_n} u)(x)=a(h_nx) u(x).\]
The set of all continuous functions $u$ with compact support $U_u$ such that $0\notin U_u$ 
is dense in $L^p(\Rb)$, thus it suffices to consider such $u$. Fix one point $x_0\in U_u$. 
Then, by a Bolzano-Weierstrass argument, there is a subsequence $l$ of $h$ such 
that $a(l_nx_0)$ converges, lets say to $a_0$. Since $a\in \SO^\infty$ 
the oscillation $\osc(a(l_n\cdot), U_u)$ tends to zero, hence the sequence 
of the functions $x\mapsto a(l_nx)u(x)$ converges uniformly to the 
function $a_0u(x)$, which gives the claim.
\end{proof}

\begin{lem}\label{LW0bComm}
For every Fourier multiplier $b$ it holds that
\[\Ws^c\{W^0(b)\}=\Ws^-\{W^0(b)\}=\Ws^+\{W^0(b)\}=W^0(b).\]
If $b\in[\BUC,\PC]_p$ then $\{W^0(b)\}\in\Fc^T$ with
\[\Hs^t\{W^0(b)\}=W^0(b(t-0)\chi_-+b(t+0)\chi_+) \quad (t\in\Rb).\]
If  $b\in[\BUC,\PC,\SO]_p$ then $\{W^0(b)\}\in\Rc^T$ and the $\Hs^t$-snapshots
(of $T$-structured subsequences) are still of the form 
$W^0(c_-\chi_-+c_+\chi_+)$ with constants $c_-, c_+$.
\end{lem}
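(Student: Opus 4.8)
The plan is to reduce the three $\Ws$-snapshots to shift invariance and all the $\Hs^t$-snapshots to the behaviour of the rescaled symbols $\xi\mapsto b(t+\xi/n)$ near the frequency $t$. The $\Ws$-part is immediate: every convolution satisfies $V_sW^0(b)V_{-s}=W^0(b)$, so the three sequences $\{W^0(b)\}$, $\{V_nW^0(b)V_{-n}\}$ and $\{V_{-n}W^0(b)V_n\}$ that define $\Ws^c$, $\Ws^-$ and $\Ws^+$ are all the constant sequence $\{W^0(b)\}$, and hence the three $\Ws$-snapshots equal $W^0(b)$.

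For the $\Hs^t$-snapshots I would first establish the key conjugation identity
\[Z_n^{-1}U_tW^0(b)U_{-t}Z_n=W^0\big(b(t+\cdot/n)\big),\]
which follows from the two elementary intertwining relations $FU_tF^{-1}=V_{-t}$ (a modulation turns into a shift of the symbol, so $U_tW^0(b)U_{-t}=W^0(b(\cdot+t))$) and the dilation rule $Z_n^{-1}W^0(a)Z_n=W^0(a(\cdot/n))$. Since $U_t$ and $Z_n$ are isometric isomorphisms, all the operators $W^0(b(t+\cdot/n))$ share the norm $\|W^0(b)\|_{M^p}$; the sequence is therefore uniformly bounded, and the set of multipliers $b$ with $\{W^0(b)\}\in\Fc^T$ is closed in $M^p$. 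Moreover $\Hs^t$ is a continuous homomorphism on these sequences and the candidate formula $b\mapsto W^0(b(t-0)\chi_-+b(t+0)\chi_+)$ is itself a continuous homomorphism in $b$ (because $(bb')(t\pm0)=b(t\pm0)\,b'(t\pm0)$ and $\chi_-\chi_+=0$). Thus it suffices to verify the identity on the generators $\BUC$ and $\chi_{[\lambda,\infty)}$ of $[\BUC,\PC]_p$ provided by Lemma \ref{LPCp}.

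The main work, and the step I expect to be the real obstacle, is to pass from the easy pointwise convergence of the rescaled symbols to the required $(*)$-strong convergence of the operators on $L^p$. For the jump generator $\chi_{[\lambda,\infty)}$ the symbol $\chi_{[\lambda,\infty)}(t+\xi/n)$ equals $\chi_+(\xi)$ identically when $t=\lambda$ and converges pointwise to the constant $\chi_{[\lambda,\infty)}(t)$ when $t\neq\lambda$, while for $b\in\BUC$ it converges pointwise to $b(t)$; in every case the limit symbol is exactly $b(t-0)\chi_-+b(t+0)\chi_+$. To upgrade this to strong operator convergence I would reuse the interpolation device of Corollary \ref{CQBOComm}: writing $c_n$ for the difference between the rescaled symbol and its limit, dominated convergence gives $\|c_n\widehat u\,\|_2\to0$ and hence strong convergence on $L^2$, on some $L^r$ with $|r-2|>|p-2|$ (available since the generators lie in $M^{<p>}$) the operators are uniformly bounded, and log-convexity of the $L^p$-norms interpolates these two facts into strong convergence on $L^p$; running the same argument on $L^{p'}$ with the adjoint symbols supplies the missing $*$-strong part.

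Finally, for $b\in[\BUC,\PC,\SO]_p$ the only new feature is that the slowly oscillating generators $\SO^\lambda$ possess a single ``bad'' frequency $t=\lambda$ at which the rescaled symbols no longer converge. There I would select, by a Bolzano--Weierstrass argument on a reference sequence such as $b(\lambda\pm1/n)$ together with a diagonalisation over the compacta $[1/k,k]$, a subsequence along which the slow-oscillation condition forces $b(\lambda+\cdot/n)$ to tend almost everywhere to $c_-\chi_-+c_+\chi_+$ for suitable constants $c_\pm$, so that $\Hs^\lambda$ has the stated form; at all other $t$ no selection is needed, so a single such selection makes the full subsequence $T$-structured. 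This yields $\{W^0(b)\}\in\Rc^T$ on generators, and since $\Rc^T$ is a closed algebra by Theorem \ref{TRich} and the operators $W^0(c_-\chi_-+c_+\chi_+)$ form a two-dimensional (hence closed) subalgebra spanned by the complementary idempotents $W^0(\chi_\pm)$, both the richness and the asserted form of the $\Hs^t$-snapshots propagate to all of $[\BUC,\PC,\SO]_p$.
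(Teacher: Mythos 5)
Your $\Ws$-part is correct (shift invariance makes all three sequences constant), the conjugation identity $Z_n^{-1}U_tW^0(b)U_{-t}Z_n=W^0(b(t+\cdot/n))$ is the right starting point, and your analytic device for $p\neq 2$ --- pointwise convergence of the rescaled symbols, dominated convergence on $L^2$, uniform boundedness on some $L^r$ with $b\in M^r$, and the vector-level interpolation inequality $\|f\|_p\leq\|f\|_2^{1-\theta}\|f\|_r^{\theta}$ on the dense subset $L^2\cap L^r$ --- is a legitimate and arguably cleaner alternative to the paper's argument, which instead multiplies the symbol by cutoffs $d_m^3$ supported away from the origin so as to obtain \emph{uniform} convergence of the truncated symbols (hence $L^2$-operator-norm convergence) and then applies Riesz--Thorin at the operator level.

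There is, however, a genuine gap in how you globalize: for $p\neq 2$ the reduction ``it suffices to verify the identity on the generators'' is not available. The algebra $[\BUC,\PC]_p$ is \emph{defined} as the closure in $M^p$ of $[\BUC,\PC]\cap M^{<p>}$, not as the closed subalgebra of $M^p$ generated by $\BUC\cap M^{<p>}$ and the functions $\chi_{[\lambda,\infty)}$; Lemma \ref{LPCp} only asserts that the latter algebra is \emph{contained} in the former, and whether the two coincide is exactly one of the open questions listed in the Remark following that lemma. The same defect undermines your final step, where richness and the form of the $\Hs^t$-snapshots are ``propagated to all of $[\BUC,\PC,\SO]_p$'' through the closed algebra generated by the generators. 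For the $[\BUC,\PC]_p$ assertion your proof is repairable with almost no change: every $b\in[\BUC,\PC]$ has one-sided limits at every finite point, so your dominated-convergence/interpolation argument applies verbatim to every individual $b\in[\BUC,\PC]\cap M^{<p>}$, a set that is $M^p$-dense by definition, and the closedness of $\Fc^T$ together with $\|\cdot\|_\infty\leq\|\cdot\|_{M^p}$ finishes. For the $\SO$ part the repair is not immediate: a general $b\in[\BUC,\PC,\SO]\cap M^{<p>}$ is only an $L^\infty$-limit of polynomials in the generators (which need not converge in $M^p$, nor even be multipliers), and it may oscillate at a dense set of points, so you can neither select ``one subsequence per generator'' nor invoke closedness of $\Rc^T$ in the $M^p$-norm. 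This is precisely why the paper proceeds in two stages: it first produces a $T$-structured subsequence of $\{W^0(b)\}$ over $L^2$, where $M^2=L^\infty$ and the generator-plus-closure argument is legitimate, and then transfers each snapshot of that fixed subsequence to $L^p$ for the \emph{same} function $b$ via the $d_m^3$-truncation and Riesz--Thorin. You would need to adopt this two-stage structure (your interpolation trick can replace the truncation in the second stage) to close the argument for $[\BUC,\PC,\SO]_p$.
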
 
\begin{proof}
The $\Ws$-snapshots are clear, since convolution operators are shift invariant.

For the $\Hs$-snapshots we first assume that $p=2$. Then it suffices to consider 
the generators $W^0(b)$ separately with $b$ either uniformly continuous over $\Rb$, 
or $b=\chi_{[\lambda,\infty)}$, or $b\in\SO^\lambda$ for $\lambda\in \Rb$. 
The piecewise continuous and the slowly oscillating $b$ are already studied in 
\cite[Proposition 7.4.c]{KMS2010}.
so, let $b\in\BUC$ and $\epsilon>0$. Then there exists a $\delta>0$ such that
$\sup\{\osc(b,[t-\delta,t+\delta]):t\in\Rb\}<\epsilon$. Let $\varphi^t$
be the continuous piecewise linear spline which is $1$ at the point $t$ and $0$
outside the interval $[t-\delta,t+\delta]$, resp. Then
\[W^0(b)=W^0(b\varphi^t)+W^0(b(1-\varphi^t))
=b(t)W^0(\varphi^t)+W^0((b-b(t))\varphi^t)+W^0(b(1-\varphi^t)),\]
where the first term has the $\Hs^t$-snapshot $b(t)I$, the third one has 
$\Hs^t$-snapshot zero, and the second one has norm less then $\epsilon$.
Since $\epsilon$ and $t$ were arbitrarily chosen, we find that $\Hs^t(W^0(b))$ 
must be $b(t)I$ for every $t\in\Rb$.

Now let $p\neq2$ and fix $b\in[\PC,\SO,\BUC]_p$ and $g:\Nb\to\Nb$. 
We can already assume that $b\in M^r$ for an $r$ with $|2-p|<|2-r|$. Indeed,
this is clear since, by the definition of the algebra $[\cdot]_p$, these
particular functions are dense, and moreover the 
$\Hs$-homomorphisms are bounded (even uniformly w.r.t. $t\in\Rb$).
The sequence $\{W^0(b)\}_g$, considered as a sequence over 
$L^2(\Rb)$ has a subsequence $\{W^0(b)\}_h\in\Fc_h^T$. We want to show that then
all $\Hs^t\{W^0(b)\}_h$, $t\in\Rb$, over $L^p(\Rb)$ exist as well. By symmetry 
reasons it suffices to consider $t=0$. Note that the snapshot $\Hs^0\{W^0(b)\}_h$
over $L^2(\Rb)$ is of the form $W^0(b_0)$ where $b_0=c_-\chi_-+c_+\chi_+$ with
constants $c_-,c_+$.

Let $d$ denote the continuous piecewise linear spline which is $1$ over $[-1,1]$
and $0$ for $|x|\geq 2$.  
Then, as shown in \cite[Proposition 7.4.c]{KMS2010}, $Z_m^{-1}W^0(d)Z_m$ are of 
the form $W^0(d^1_m)$ with $d^1_m(x)=d(x/m)$ and tend $*$-strongly to the identity, 
whereas $Z_mW^0(d)Z_m^{-1}$ are of the form $W^0(d^2_m)$ with $d^2_m(x)=d(mx)$ and 
tend $*$-strongly to zero. Thus, defining $d^3_m:=d^1_m(1-d^2_m)$ we obtain a sequence 
of operators $W^0(d_m^3)$ which tends $*$-strongly to the identity (see Figure \ref{fig:d3n-2}). 
For $u\in L^p(\Rb)$
consider the decomposition
\begin{align*}
&\|(Z_{h_n}^{-1} W^0(b) Z_{h_n}-W^0(b_0)) u\|
=\|Z_{h_n}^{-1} W^0(b-b_0) Z_{h_n} u\|  \\
&\quad\leq \|W^0(b-b_0)\|\|W^0(1-d_m^3)u\|  + \|Z_{h_n}^{-1} W^0(b-b_0) Z_{h_n}W^0(d_m^3) u\|\\
&\quad\leq  \|W^0(b-b_0)\|\|W^0(1-d_m^3)u\|  + \|W^0((b_{h_{n}}-b_0)d_m^3)\|\|u\|
\end{align*}
with $b_{h_{n}}(x) := b(x/h_{n})$ and let $\epsilon>0$. 
\begin{figure}
	\centering
  \includegraphics[scale=1.2]{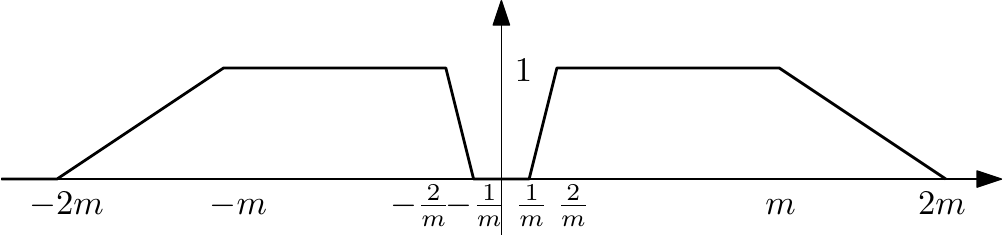}
  \caption{the functions $d_m^3$}
  \label{fig:d3n-2}
\end{figure}
For sufficiently large $m$ the 1st summand is less than 
$\epsilon\|u\|$ since the norms $\|W^0(1-d_m^3)u\|$ tend to zero as $m\to\infty$. 
Fix such an $m$. Then, for the 2nd summand notice that all the functions 
$(b_{h_n}-b_0)d_m^3$, $n\in\Nb$, are continuous at $0$, have their supports 
contained in the compact support of $d_m^3$ and converge pointwise to zero as $n\to\infty$. 
We claim that they even converge uniformly. If $b$ has only finitely many 
points of discontinuity, then this is obvious since, for sufficiently large $n$,  
the points of discontinuity of $b_{h_n}-b_0$ are outside the support of $d_m^3$,
hence these $(b_{h_n}-b_0)d_m^3$ are uniformly continuous and the claim follows as in 
the proof of Lemma \ref{LRMultOps}. For general $b$ we just apply the density 
of the set of all functions with finitely many discontinuities and an 
approximation argument. Thus, 
$\|W^0((b_{h_n}-b_0)d_m^3)\|_{\lb{L^2(\Rb)}}\leq \|(b_{h_n}-b_0)d_m^3\|_\infty\to 0$
as $n\to\infty$. Moreover 
$\|W^0((b_{h_n}-b_0)d_m^3)\|_{\lb{L^r(\Rb)}}\leq \|W^0(b-b_0)\|_{\lb{L^r(\Rb)}}$
are uniformly bounded. Using Riesz-Thorin interpolation we find that 
$\|W^0((b_{h_n}-b_0)d_m^3)\|_{\lb{L^p(\Rb)}}$, hence also the second summands above,
tend to zero. Since $\epsilon$ was arbitrarily fixed, we get that 
$Z_{h_n}^{-1} W^0(b) Z_{h_n} u$ tends to $W^0(b_0)u$. 
Duality gives the rest. 
\end{proof}

We point out that $V_\lambda=W^0(b)$ with $b(x)=e^{\ii\lambda x}$ 
\cite[Sections 3.1, 19.1]{BKSp2002}, hence
\begin{cor}
All shift operators $V_\lambda$ induce sequences $\{V_\lambda\}\in\Ac\subset\Rc^T$.
\end{cor}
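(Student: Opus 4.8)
The corollary asserts that every shift operator $V_\lambda$ generates a sequence $\{V_\lambda\}$ lying in $\Ac$, and hence in $\Rc^T$. The plan is to verify the two claims separately: membership in $\Ac$, and then membership in $\Rc^T$.

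First I would establish the stated identity $V_\lambda = W^0(b)$ for $b(x) = e^{\ii\lambda x}$. This is essentially a direct computation with the Fourier transform: conjugating multiplication by $e^{\ii\lambda x}$ through $F$ produces a shift, which is exactly $V_\lambda$. The cited references \cite[Sections 3.1, 19.1]{BKSp2002} supply this, so I would simply invoke it. The point is that $b(x) = e^{\ii\lambda x}$ is precisely one of the generators of the almost periodic algebra $\APer$ introduced earlier, so $b \in \APer \subset \BUC \subset [\PC,\SO,\BUC]$. Moreover $b$ is a Fourier multiplier on every $L^p(\Rb)$ (indeed $V_\lambda$ is an isometry, so $\|b\|_{M^p} = 1$ for all $p$), which shows $b \in M^{<p>}$ and therefore $b \in [\PC,\SO,\BUC]_p$.

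Given this, membership $\{V_\lambda\} = \{W^0(b)\} \in \Ac$ is immediate from the definition \eqref{EAc} of $\Ac$, since $\Ac$ is defined as the algebra generated by sequences $\{W^0(b)\}$ with $b \in [\PC,\SO,\BUC]_p$, among others. The inclusion $\Ac \subset \Rc^T$ is the content of the preceding Proposition, whose proof reduces to checking the generators; in particular Lemma \ref{LW0bComm} already shows $\{W^0(b)\} \in \Rc^T$ for all $b \in [\PC,\SO,\BUC]_p$. Since $b(x) = e^{\ii\lambda x}$ belongs to this algebra, the richness of $\{V_\lambda\}$ follows at once from that lemma.

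Because each ingredient is already in place, there is no real obstacle here; the corollary is purely a matter of recognizing $V_\lambda$ as a convolution operator with an almost periodic (hence $\BUC$) symbol and then quoting the established results. The only point deserving a moment's care is the verification that $e^{\ii\lambda x} \in M^{<p>}$ so that it genuinely lands in the $M^p$-closure defining $[\PC,\SO,\BUC]_p$; this is trivial since $V_\lambda$ is an isometric isomorphism on $L^p(\Rb)$ for every $p \in (1,\infty)$, giving a uniform multiplier bound. With that noted, the corollary is an immediate consequence of Lemma \ref{LW0bComm} and the identification of the symbol.
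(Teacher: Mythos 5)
Your argument is exactly the paper's: the corollary is stated there as an immediate consequence of the identification $V_\lambda = W^0(b)$ with $b(x)=e^{\ii\lambda x}\in\APer\subset\BUC$ (cited from \cite[Sections 3.1, 19.1]{BKSp2002}) together with the definition \eqref{EAc} of $\Ac$ and Lemma \ref{LW0bComm}. Your additional remark that $\|b\|_{M^p}=1$ for all $p$, so that $b\in M^{<p>}$ and hence genuinely lies in $[\PC,\SO,\BUC]_p$, is a correct and worthwhile detail that the paper leaves implicit.
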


Finally, it is immediate that
\begin{lem}For the sequence $\{P_n\}$ and for every $t\in\Rb$ it holds
\[\Ws^c\{P_n\}=I,\quad \Ws^-\{P_n\}=\chi_+I, \quad\Ws^+\{P_n\}=\chi_-I, 
\quad \Hs^t\{P_n\}=P_1.\]
\end{lem}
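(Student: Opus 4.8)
The plan is to compute each of the four snapshots of the sequence $\{P_n\}$ directly from the definitions, relying on the fact that $P_n=\chi_{[-n,n]}I$ together with the explicit action of the transformation operators $V_n$, $Z_n$, and $U_t$.

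First I would handle $\Ws^c\{P_n\}$. By definition this is the $\Pc$-strong limit $\plimn P_n$. Since $\Pc=(P_n)$ is a uniform approximate identity, for any $K\in\pc{\Xb}$ the products $\|K(P_n-I)\|$ and $\|(P_n-I)K\|$ tend to zero as $n\to\infty$ (this is precisely the defining property of $\Pc$-compact operators). Hence $P_n\to I$ $\Pc$-strongly, giving $\Ws^c\{P_n\}=I$.

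Next I would treat the two lateral snapshots $\Ws^\pm$. For $\Ws^-\{P_n\}=\plimn V_nP_nV_{-n}$, I would compute the conjugated operator explicitly: since $V_s$ is the shift $f(x)\mapsto f(x-s)$, conjugating the multiplication operator $\chi_{[-n,n]}I$ by $V_n$ shifts the interval, yielding $V_nP_nV_{-n}=\chi_{[-2n,0]}I$. As $n\to\infty$ this multiplier increases to $\chi_{(-\infty,0]}=\chi_-$, and I would verify the convergence is $\Pc$-strong by checking $\|K(\chi_{[-2n,0]}-\chi_-)I\|\to0$ for $\Pc$-compact $K$ (which follows because $\chi_{[-2n,0]}\to\chi_-$ pointwise with uniform bound and $K$ is $\Pc$-compact). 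The snapshot is $\Ws^-\{P_n\}=\chi_+I$ once one accounts for the sign convention in the lateral transformation; symmetrically, $\Ws^+\{P_n\}=\chi_-I$. I would double-check the orientation against the conventions fixed for $\Ws^\pm$ in the setup so that the characteristic functions land on the correct half-lines.

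Finally, for the $*$-strong snapshot $\Hs^t\{P_n\}=\slim Z_n^{-1}U_tP_nU_{-t}Z_n$, I would note that $U_t$ is multiplication by $e^{\ii tx}$ and hence commutes with the multiplication operator $P_n=\chi_{[-n,n]}I$, so $U_tP_nU_{-t}=P_n$. Then the dilation $Z_n:(Z_nu)(x)=n^{-1/p}u(x/n)$ conjugates $P_n$ to a multiplier on a rescaled interval: $Z_n^{-1}\chi_{[-n,n]}Z_n=\chi_{[-1,1]}I=P_1$, which is already independent of $n$. Thus the sequence is constant and its $*$-strong limit is simply $P_1$, giving $\Hs^t\{P_n\}=P_1$ for every $t\in\Rb$.

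The whole statement is essentially a sequence of routine computations, so there is no real obstacle; the only point demanding care is the bookkeeping of the sign and orientation conventions for $\Ws^\pm$ (ensuring $\chi_+$ versus $\chi_-$ is assigned correctly) and confirming in each lateral case that pointwise convergence of the rescaled characteristic functions, combined with $\Pc$-compactness of the test operators $K$, indeed upgrades to genuine $\Pc$-strong convergence.
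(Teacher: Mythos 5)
Your overall strategy is exactly the right one --- the paper itself gives no argument (the lemma is introduced with ``it is immediate that''), and direct computation of the conjugated multipliers is the intended proof. The $\Ws^c$ and $\Hs^t$ parts are correct as you wrote them: $P_n\to I$ $\Pc$-strongly by the very definition of $\pc{\Xb}$, and $U_t$ commutes with $\chi_{[-n,n]}I$ while $Z_n^{-1}\chi_{[-n,n]}Z_n=\chi_{[-1,1]}I=P_1$ gives a constant sequence. The one concrete error is in the lateral snapshot: with the paper's convention $(V_sf)(x)=f(x-s)$ one has $V_nM_aV_{-n}=M_{a(\cdot-n)}$, so
\[
V_nP_nV_{-n}=\chi_{[-n,n]}(\cdot-n)\,I=\chi_{[0,2n]}I\longrightarrow\chi_+I,
\]
not $\chi_{[-2n,0]}I$. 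Your computation lands on $\chi_-$ for $\Ws^-$ and you then flip the answer by appealing to ``the sign convention in the lateral transformation,'' but that flip is not an argument --- it is precisely the step where the proof has to be right, and as written your text is internally inconsistent (it derives $\chi_-$ and asserts $\chi_+$). Redo the conjugation with the correct direction of the shift and the stated values $\Ws^-\{P_n\}=\chi_+I$, $\Ws^+\{P_n\}=\chi_-I$ fall out with no hand-waving; the $\Pc$-strong convergence argument you sketch (pointwise convergence of uniformly bounded multipliers tested against $\Pc$-compact $K$) is fine.
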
 

\subsection{Finite sections of convolution type operators}\label{SLocal}

Theorem \ref{TRich} gives us stability conditions and a characterization
of the asymptotic behavior of the approximation numbers for a wide
class of operator sequences: the $T$-rich sequences. To know if the
conditions are satisfied, we need to know whether or when for a given
sequence $\Ab$ every $T$-structured subsequence has a subsequence
$\Ab_{h}$ which is regularly $\Jc^{T}$-Fredholm, that means a subsequence
$\Ab_{h}$ such that $\Ab_{h}+\Jc_{h}^{T}$ is invertible in the quotient
algebra $\Fc_{h}^{T}/\Jc_{h}^{T}$ and all operators $\Ws^{t}(\Ab_{h})$
and $\Hs^{t}(\Ab_{h})$ are Fredholm.

We will show, in particular, for finite sections of operators in a
large class of convolution type operators, that the  $\Pc$-Fredholmness of
the three snapshots $\Ws^{t}(\Ab_{h})$ imply the invertibility in $\Fc_{h}^{T}/\Jc_{h}^{T}$.

In order to apply Allan-Douglas principle, we start by defining, for
each continuous function $\varphi\in C(\dot{\Rb})$, the sequence of
expanded functions given by 
\[\varphi_{n}:\Rb\to\Rb,\quad\varphi_{n}(x):=\varphi(x/n).\]

Define further, $\Cc:=\{\{\varphi_{n}I\}:\varphi\in C(\dot{\Rb})\}$
and note that an alternative representation is given by
\begin{equation}
\{\varphi_{n}I\}=\{Z_{n}\varphi Z_{n}^{-1}\}.\label{eq:expanded}
\end{equation}

\begin{prop}
$\Cc$ is a closed commutative subalgebra of $\Fc^{T}\subset\Rc^{T}$
with 
\[\Ws^{\pm}\{\varphi_{n}I\}=\varphi(\pm1)I, \quad
\Ws^{c}\{\varphi_{n}I\}=\varphi(0)I\quad\text{and}\quad
\Hs^{t}\{\varphi_{n}I\}=\varphi I \text{ for all $t\in\Rb$};\] 
and is isometrically isomorphic to $C(\dot{\Rb})$.
\end{prop}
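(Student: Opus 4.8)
The plan is to verify the claimed snapshot values by direct computation and then to establish the algebraic and topological structure of $\Cc$. First I would compute the snapshots of the generators $\{\varphi_n I\}$ using the alternative representation \eqref{eq:expanded}. For the $\Hs^t$-snapshots, observe that $Z_n^{-1}U_t\varphi_n U_{-t}Z_n = Z_n^{-1}\varphi_n Z_n$, since the multiplication operators $\varphi_n I$ and $U_t$ commute; then using \eqref{eq:expanded} this equals $\varphi I$ for every $n$, so the $*$-strong limit is trivially $\varphi I$, independent of $t$. For the $\Ws^c$-snapshot I would compute the $\Pc$-strong limit of $\varphi_n I$: for a fixed $\Pc$-compact $K$ the products $\varphi_n K$ and $K\varphi_n$ converge in norm, and since $\varphi_n(x)=\varphi(x/n)\to\varphi(0)$ uniformly on each interval $[-m,m]$, one obtains $\Ws^c\{\varphi_n I\}=\varphi(0)I$. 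For $\Ws^\pm$ I would use the relation $V_n\varphi_n V_{-n}=\psi_n I$ where $\psi_n(x)=\varphi((x-n)/n)=\varphi(x/n-1)$, which converges to $\varphi(-1)$ near the origin, giving $\Ws^-\{\varphi_n I\}=\varphi(-1)I$; symmetrically $\Ws^+\{\varphi_n I\}=\varphi(+1)I$. These computations simultaneously confirm that each $\{\varphi_n I\}\in\Fc^T$.

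Next I would establish that $\Cc$ is a closed commutative subalgebra and that the map $\Phi:C(\dot{\Rb})\to\Cc$, $\varphi\mapsto\{\varphi_n I\}$, is an isometric isomorphism. Commutativity and the algebra-homomorphism property are immediate from the pointwise structure: $\{\varphi_n I\}\{\psi_n I\}=\{(\varphi\psi)_n I\}$ since $(\varphi\psi)(x/n)=\varphi(x/n)\psi(x/n)$, and addition and scalar multiplication are equally transparent. So $\Phi$ is a unital algebra homomorphism onto $\Cc$. For injectivity and the isometry I would compute the norm $\|\{\varphi_n I\}\|=\sup_n\|\varphi_n I\|_{\lb{\Xb}}=\sup_n\|\varphi_n\|_\infty=\|\varphi\|_\infty$, using that the norm of a multiplication operator by an $L^\infty$ function on $L^p(\Rb)$ equals its essential supremum, and that the sup-norm of $\varphi(\cdot/n)$ equals that of $\varphi$. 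This identity $\|\Phi\varphi\|=\|\varphi\|_\infty$ gives both isometry and injectivity at once, and closedness of $\Cc$ follows because $\Cc$ is the isometric image of the complete space $C(\dot{\Rb})$.

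The one point requiring genuine care, rather than routine verification, is confirming that $\Cc\subset\Fc^T$, i.e. that all the relevant limits truly exist for \emph{every} $\varphi\in C(\dot{\Rb})$ and not merely for a dense subset. Since the snapshot maps $\Ws^t$ and $\Hs^t$ are bounded homomorphisms and $\Cc$ will be shown to be the closed image of $C(\dot{\Rb})$, it suffices to check existence of the limits on a dense generating set of $C(\dot{\Rb})$ and then pass to the closure; the piecewise-linear splines or the constants together with $\chi$-type building blocks already handled in Lemma \ref{LRMultOps} and in \cite{KMS2010} supply this. I expect the main obstacle to be purely bookkeeping: ensuring that the $*$-strong convergence defining the $\Hs^t$-snapshots is legitimately strong on both the operator and its adjoint. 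This is in fact trivial here because $Z_n^{-1}\varphi_n Z_n=\varphi I$ is \emph{constant} in $n$, so every such limit is attained exactly rather than merely in the limit, which collapses the potential difficulty entirely.
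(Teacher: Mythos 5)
Your proposal is correct and follows essentially the same route as the paper: the $\Hs^t$-snapshots are read off from the identity $Z_n^{-1}\varphi_n Z_n=\varphi I$ (so the transformed sequence is constant), the $\Ws$-snapshots are obtained from the uniform-on-compacts convergence of the shifted/dilated copies of $\varphi$ to $\varphi(0)$, $\varphi(\pm1)$, and the isometry $\|\{\varphi_n I\}\|=\|\varphi\|_\infty$ yields closedness. Your density/closure digression in the last paragraph is unnecessary (as you yourself note, the direct computations already cover every $\varphi\in C(\dot{\Rb})$), but it introduces no error.
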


\begin{proof}
Note that $\Ws^{+}\{\varphi_{n}I\}=\mathcal{P}\mbox{-}\lim_{n\rightarrow\infty}\phi_{n}I$,
with $\phi_{n}(x):=\varphi(\frac{x+n}{n})$. Since $\phi_{n}$ converges
pointwise to $\varphi(1)$ and $\varphi\in C(\dot{\Rb})$, then $\varphi_{n}I$
converges $\mathcal{P}$-strongly to $\varphi(1)I$. The same arguments
apply to the other W-snapshots. For the H-snapshots, the assertion
is straightforward taking into account \eqref{eq:expanded}. Thus,
$\Cc$ is a closed commutative subalgebra of $\Fc^{T}$ and it is
also clear that the map $\{\varphi_{n}\}\mapsto\varphi$ is an isometric
isomorphism.
\end{proof}

For sequences $\mathbb{A}$ and $\mathbb{B}$, let $\left[\mathbb{A},\mathbb{B}\right]$
denote the commutator $\mathbb{A}\mathbb{B}-\mathbb{B}\mathbb{A}$.

\begin{prop}\label{PComm1}
For every $\varphi\in C(\dot{\Rb})$,
\begin{itemize}
\item[(a)] $\Vert[\varphi_{n}I, W^{0}(b)]\Vert\rightarrow0$ for $b\in[\SO,\BUC]_{p}$ 
\item[(b)] $[\{\varphi_{n}I\},\{W^{0}(\chi_{[t,\infty)})\}]=\{U_{-t}Z_{n}KZ_{n}^{-1}U_{t}\}$,
with $K$ a compact operator.
\end{itemize}
\end{prop}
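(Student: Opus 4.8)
The plan is to prove the two parts separately, using the commutator characterizations already established in the excerpt together with the special structure of the generators $\varphi_n I$ and the inflated copies.

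For part (a), the key observation is that the inflated functions $\varphi_n(x)=\varphi(x/n)$ are exactly the inflated copies $\varphi_n$ appearing in Proposition \ref{PBDOComm} and Theorem \ref{TQBCom}, with the dilation parameter $t=n$. Since $\varphi\in C(\dot{\Rb})\subset C(\overline{\Rb})$ and $b\in[\SO,\BUC]_p$, Corollary \ref{CConvClass} tells us $W^0(b)$ is quasi-banded. Thus by Theorem \ref{TQBCom} we immediately get
\[\|[\varphi_n I, W^0(b)]\| = \|[W^0(b),\varphi_n I]\|\to 0\quad\text{as}\quad n\to\infty,\]
since $\varphi\in C(\overline{\Rb})$ is precisely the class of test functions for which quasi-bandedness forces the commutator to vanish. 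So part (a) reduces to an application of Theorem \ref{TQBCom}, and the main task is simply to recognize that $\{\varphi_n I\}$ instantiates the dilated-commutator setup at integer dilation parameters.

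For part (b), I would compute the commutator sequence entry-wise and identify its limit structure. Writing $\varphi_n I = Z_n \varphi Z_n^{-1}$ by \eqref{eq:expanded}, and recalling that $W^0(\chi_{[t,\infty)})=U_{-t}W^0(\chi_{[0,\infty)})U_t=U_{-t}\,\frac{1}{2}(I+S')\,U_t$ where $S'$ is a singular-integral-type operator, the idea is to conjugate everything by $U_t$ and $Z_n$ so that the commutator takes the displayed form. Concretely, since $\chi_{[t,\infty)}(x)=\chi_{[0,\infty)}(x-t)$, the shift $U_{-t}(\cdot)U_t$ moves the jump to the origin; then conjugating by $Z_n^{-1}$ dilates the jump point of $\varphi$ but fixes the jump of $\chi_{[0,\infty)}$ at $0$. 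The expected outcome is that the commutator equals $U_{-t}Z_n K_n Z_n^{-1}U_t$ where $K_n$ converges (in the appropriate sense) to a fixed compact operator $K$ built from $\varphi(0)$ and the symbol of the Cauchy singular integral operator.

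The hard part will be part (b): verifying that the resulting operator $K$ is genuinely compact and that the sequence has the claimed clean form rather than merely a $\Pc$-compact or asymptotically-negligible form. The compactness should stem from the fact that $[\varphi I, W^0(\chi_{[0,\infty)})]$ is, up to the dilation and modulation, a commutator of a multiplication operator with a Cauchy singular integral operator having matching jump data, which is a classical source of compact operators (the commutator of a continuous multiplier with $S$ is compact on $L^p$). I would therefore first reduce to the case $t=0$ by the modulation $U_{\pm t}$, then use $\varphi_n=Z_n\varphi Z_n^{-1}$ together with the shift-invariance of $W^0(\chi_{[0,\infty)})$ to extract the dilation $Z_n(\cdot)Z_n^{-1}$ explicitly, leaving a single fixed commutator $[\varphi I, W^0(\chi_{[0,\infty)})]=K$ whose compactness follows from the standard theory of singular integral operators with piecewise continuous symbol (e.g. via the results cited from \cite{BKSp2002} or \cite{KMS2010}). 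The bookkeeping of which conjugations produce exact equalities versus limits is the delicate point, and I expect the statement to hold as an exact identity (not merely asymptotically), so the proof will hinge on an algebraic identity rather than an estimate.
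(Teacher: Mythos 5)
Your proposal is correct and follows essentially the same route as the paper: part (a) is exactly the combination of Corollary \ref{CConvClass} and Theorem \ref{TQBCom}, and part (b) is the same exact algebraic identity obtained by conjugating with $U_{\pm t}$, using $\varphi_nI=Z_n\varphi Z_n^{-1}$ and the dilation-invariance $Z_n^{-1}W^0(\chi_+)Z_n=W^0(\chi_+)$ (which is what you need, rather than shift-invariance), leaving the single fixed compact commutator $K=[\varphi I,W^0(\chi_+)]$. The paper cites \cite[Prop 5.3.1]{RSS2010book} for that compactness, which is the same classical fact you invoke about commutators of continuous multipliers with the Cauchy singular integral operator.
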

\begin{proof}
Assertion (a) is a consequence of Corollary \ref{CConvClass} and Theorem \ref{TQBCom}. 
For the symbols $b=\chi_{t}=\chi_{[t,\infty)}$, $t\in\mathbb{R}$, from the relations
$W^{0}(\chi_{t})=U_{-t}W^{0}(\chi_{+})U_{t}$ and 
$Z_{n}^{-1}W^{0}(\chi_{+})Z_{n}=W^{0}(\chi_{+})$
for every $t\in\mathbb{R}$, it follows that 
\begin{eqnarray*}
\varphi_{n}W^{0}(\chi_{t})-W^{0}(\chi_{t})\varphi_{n}I & = & \varphi_{n}U_{-t}W^{0}(\chi_{+})U_{t}-U_{-t}W^{0}(\chi_{+})U_{t}\varphi_{n}I\\
 & = & U_{-t}Z_{n}\varphi Z_{n}^{-1}W^{0}(\chi_{+})U_{t}-U_{-t}W^{0}(\chi_{+})Z_{n}\varphi Z_{n}^{-1}U_{t}\\
 & = & U_{-t}Z_{n}[\varphi I,W^{0}(\chi_{+})]Z_{n}^{-1}U_{t},
\end{eqnarray*}
and since $[\varphi I,W^{0}(\chi_{+})]$ is a compact operator (see \cite[Prop 5.3.1]{RSS2010book})
we obtain assertion (b). 
\end{proof}

We now introduce a subalgebra $\Ac_1$ of $\Ac$ by
\begin{equation}
\begin{split}
\Ac_1:=\alge&\left\{ \{aI\},\,\{W^0(b)\},\,\{K\},\,\{P_n\}:\right. \\
 & \;\;\left.a\in[\PC,\SO,L_0^{\infty}],\; b\in[\PC^\lambda,\SO,\BUC]_p,\;\lambda\in\Rb,
 \;K\in\pc{X}\right\}
\end{split}
\end{equation}
and mention that it is not known if $\Ac_1$ is really a proper subalgebra of $\Ac$, since it is
unknown whether there are multipliers in $\PC_p$ which do not belong to the smallest closed
algebra which includes all $[\PC^\lambda,\SO,\BUC]_p$, $\lambda\in\Rb$.
Anyway, this algebra covers, unites and extends all previously considered cases in the literature.

\begin{cor}\label{CComm1}
Let $\mathbb{A}\in\Ac_1$. Then $\left[\mathbb{A},\mathbb{C}\right]\in\Jc^{T}$
for every $\mathbb{C}\in\Cc$.
\end{cor}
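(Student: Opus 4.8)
The goal is to show that the commutator $[\Ab,\Cc]\subset\Jc^T$ for every $\Ab\in\Ac_1$. Since $\Jc^T$ is a closed two-sided ideal of the Banach algebra $\Fc^T$, and since the commutator map $\Ab\mapsto[\Ab,\Cc]$ (for fixed $\Cc$) is linear and behaves well under products via the Leibniz-type identity $[\Ab\Bb,\Cc]=\Ab[\Bb,\Cc]+[\Ab,\Cc]\Bb$, it suffices to verify the claim on the \emph{generators} of $\Ac_1$ and then propagate it through sums and products. First I would reduce to a single generating sequence $\Cc=\{\varphi_n I\}\in\Cc$ with $\varphi\in C(\dot{\Rb})$, using that $\Cc$ is generated by such sequences and that $\Jc^T$ is closed.

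**Checking the generators.** The generators of $\Ac_1$ are $\{aI\}$ with $a\in[\PC,\SO,L_0^\infty]$, $\{W^0(b)\}$ with $b\in[\PC^\lambda,\SO,\BUC]_p$, the constant sequences $\{K\}$ with $K\in\pc{X}$, and $\{P_n\}$. For the multiplication sequences $\{aI\}$, the operators $\varphi_n I$ and $aI$ are both multiplications and hence commute pointwise, so $[\{\varphi_n I\},\{aI\}]=0\in\Jc^T$ trivially. For $\{P_n\}$, one computes $[\varphi_n I,P_n]=0$ as well, since $P_n=\chi_{[-n,n]}I$ is multiplication and commutes with $\varphi_n I$. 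For $\{K\}$, the sequence $\{[\varphi_n I,K]\}$ has entries bounded in norm by $2\|\varphi\|_\infty\|K\|$, and since these are (finite-norm, asymptotically vanishing after the $\Pc$-compact splitting) sequences, the commutator lies in $\Jc^T$; more carefully, one uses that $\{\varphi_n K\}$ and $\{K\varphi_n I\}$ differ by a sequence that can be controlled through the $\Pc$-compactness of $K$ exactly as in the proof of Lemma \ref{LSepC}.

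**The convolution generators.** The genuinely substantial case is $\{W^0(b)\}$ with $b\in[\PC^\lambda,\SO,\BUC]_p$. Here I would split the symbol according to its type. For $b\in[\SO,\BUC]_p$, Proposition \ref{PComm1}(a) gives $\|[\varphi_n I,W^0(b)]\|\to0$, so the commutator sequence has norm tending to zero and hence lies in $\Jc^T$ by definition. For the piecewise-continuous directions one reduces, using the structure of $[\PC^\lambda,\SO,\BUC]_p$ and the linearity and closedness arguments above, to the generating jump functions $b=\chi_{[t,\infty)}$; here Proposition \ref{PComm1}(b) identifies the commutator \emph{exactly} as $\{U_{-t}Z_n K Z_n^{-1} U_t\}$ with $K$ compact. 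The crucial observation is that such sequences were already shown, in the proof of Lemma \ref{LSepC} (the paragraph treating $\{J_n\}=\{U_{-t}Z_n K Z_n^{-1}U_t\}$), to have \emph{all} snapshots equal to zero --- indeed they are precisely the liftings $\{E_n^t(K)\}$ for $t\in T_*=\Rb$ --- and hence belong to $\Jc^T$ by its very construction.

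**Main obstacle.** The reduction to generators via the Leibniz identity is routine, and the three ``easy'' generators are immediate. The delicate point is ensuring that the decomposition of a general $b\in[\PC^\lambda,\SO,\BUC]_p$ into its $[\SO,\BUC]_p$ part and its jump parts $\chi_{[t,\infty)}$ is legitimate \emph{within the closed algebra} and that the approximation respects the $M^p$-norm, so that Proposition \ref{PComm1} applies to a dense set and the closedness of $\Jc^T$ then upgrades the conclusion to all of $[\PC^\lambda,\SO,\BUC]_p$. I expect this closure/density bookkeeping --- rather than any single estimate --- to be where the real care is needed; the two parts of Proposition \ref{PComm1} do all the analytic heavy lifting.
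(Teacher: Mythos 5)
Your proposal is correct and follows essentially the same route as the paper: reduce to the generators, observe that $\{aI\}$ and $\{P_n\}$ commute with $\{\varphi_nI\}$, and treat $\{W^0(b)\}$ by splitting $b$ into a $[\SO,\BUC]_p$ part (Proposition~\ref{PComm1}(a), commutator norms tending to zero) and a jump part (Proposition~\ref{PComm1}(b), whose commutator is the lifting $\{U_{-\lambda}Z_n K Z_n^{-1}U_\lambda\}$ of a compact operator and hence lies in $\Jc^T$ by construction). The only place you overcomplicate matters is the case $\{K\}$: since $\{K\}\in\Jc^T$ by definition and $\Jc^T$ is a two-sided ideal in $\Fc^T$, the commutator with $\{\varphi_nI\}$ is in $\Jc^T$ immediately, without the $\Pc$-compact splitting you invoke.
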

\begin{proof}
It suffices to check the assertion for the generators of $\Ac_1$. Clearly, constant
sequences $\{aI\}$ of operators of multiplication, as well as $\{P_{n}\}$
commute with $\{\varphi_{n}I\}$, and $\{K\}\in\Jc^T$ by definition. The generators
$\{W^0(b)\}$ are covered by the previous proposition, since every $b\in[\PC^\lambda,\SO,\BUC]_p$
can be decomposed as $b=\alpha \chi_\lambda + c$ with $\alpha\in\Cb$ and $c\in[\SO,\BUC]_p$, hence
\[[\{\varphi_n I\},\{W^{0}(b)\}]=\alpha\{U_{-\lambda}Z_{n}[\varphi I,W^{0}(\chi_{+})]Z_{n}^{-1}U_{\lambda}\}
+[\{\varphi_n I\},\{W^{0}(c)\}],\]
where the latter summand tends to $0$ as $n\to\infty$ and the first one is in $\Jc^T$.
\end{proof}

Let $h$ be an increasing sequence of natural numbers, $\Cc_{h}:=\{\mathbb{C}_{h}:\,\mathbb{C}\in\mathcal{C}\}$
and let $\Lc_{h}^{T}\subset\Fc_{h}^{T}$ be the set of \textit{sequences of
local type}, i.e. the set of all sequences $\Ab_h\in\Fc_{h}^{T}$ with
$\Ab_h\Cb_h-\Cb_h\Ab_h\in\Jc_{h}^{T}$ for all $\mathbb{C}\in\Cc$.

Then \cite[Lemma 6.2]{KMS2010} translates to the present setting and
provides that $\Lc_{h}^{T}$ is a closed and inverse closed subalgebra
of $\Fc_{h}^{T}$ containing $\Jc_{h}^{T}$ as a closed ideal. Moreover,
$\Lc_{h}^{T}/\Jc_{h}^{T}$ is inverse closed in $\Fc_{h}^{T}/\Jc_{h}^{T}$.
The algebra $\Cc_{h}^{\mathcal{J}}:=(\Cc_{h}+\Jc_{h}^{T})/\Jc_{h}^{T}$
is a closed central subalgebra of $\Lc_{h}^{T}/\Jc_{h}^{T}$ which
is isomorphic to $\Cc\cong C(\dot{\Rb})$. To see the latter, just
assume that the homomorphism $C(\dot{\Rb})\rightarrow\Lc_{h}^{T}/\Jc_{h}^{T}$
given by $\varphi\mapsto\{\varphi_{h_n}I\}+\Jc_{h}^{T}$ is not injective.
Then $\{\varphi_{h_n}I\}+\Jc_{h}^{T}=\Jc_{h}^{T}$ for some $\varphi\not\equiv 0$,
i.e. $\{\varphi_{h_n}I\}\in\Jc_{h}^{T}$. But then the snapshot $\Hs^{0}\{\varphi_{h_n}I\}=\varphi I$
is a compact operator, hence $\varphi\equiv0$, which is a contradiction. 
Consequently, the maximal ideal space of $\Cc_{h}^{\mathcal{J}}$ is
homeomorphic to $\dot{\Rb}$. Denote by $\Phi_{s}$ the canonical
homomorphism from $\Lc_{h}^{T}/\Jc_{h}^{T}$ onto the local algebra
at $s\in\dot{\Rb}$. Then, using Allan's local principle (see e.g. \cite[Theorem 2.2.2]{RSS2010book}),
a sequence $\Ab_h\in\Lc_{h}^{T}$ is $\Jc_{h}^{T}$-Fredholm if and only if $\Phi_{s}(\Ab_h+\Jc_{h}^{T})$
is invertible for every $s\in\dot{\Rb}$. We want to identify these local cosets, and 
for this goal we consider a subalgebra of $\mathcal{A}_1$ which still
contains the finite sections we are interested in.

\begin{defn}
Let $\Fc_{\Ac_1}$ be the smallest closed subalgebra of $\Rc^{T}$ which
contains all finite section sequences $\{P_{n}A_{n}P_{n}+(I-P_{n})\}$,
with $\{A_{n}\}\in\Ac_1$. 
Clearly, $\Fc_{\Ac_1}\subset\mathcal{A}_1$ and it is not hard to check
that each of its elements $\Ab=\{A_{n}\}$ is of the form
\[\{A_{n}\}=\{P_{n}A_{n}P_{n}+\mu_\Ab(I-P_{n})\}\]
with a certain complex number $\mu_\Ab$. 
\end{defn}

\begin{prop}\label{localization}
Let $\Ab\in\Fc_{\Ac_1}$ and $g:\Nb\to\Nb$ be strictly increasing. 
Then there exists a subsequence $h\subset g$ such that $\Ab_h\in\Lc_{h}^{T}$ and
\begin{itemize}
\item[(a)] $\Phi_{s}(\mathbb{A}_{h}+\Jc_{h}^{T})=\Phi_{s}(\{\Ws^{c}(\Ab_{h})\}+\Jc_{h}^{T})$ if $|s|<1$
\item[(b)] $\Phi_{\pm1}(\mathbb{A}_{h}+\Jc_{h}^{T})=\Phi_{\pm1}(\{V_{\pm h_{n}}\Ws^{\pm1}(\Ab_{h})V_{\mp h_{n}}\}+\Jc_{h}^{T})$
\item[(c)] $\Phi_{s}(\mathbb{A}_{h}+\Jc_{h}^{T})=\mu_\Ab\Phi_{s}(\{I\}+\Jc_{h}^{T})$ if $|s|>1$.
\end{itemize}
If $\Ws^{c}(\Ab_{h})$ (or $\Ws^{\pm1}(\Ab_{h})$) is $\Pc$-Fredholm and $B$ (or $B_{\pm}$, resp.) 
is a $\Pc$-regularizer, then the sequence $\{B\}_h$ (or $\{V_{\pm h_{n}}B_{\pm}V_{\mp h_{n}}\}_h$, resp.)
belongs to $\Lc_{h}^{T}$ as well. 
\end{prop}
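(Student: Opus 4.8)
The plan is to extract a $T$-structured subsequence, force it into local type via Corollary \ref{CComm1}, and then evaluate the three families of local cosets under Allan's principle by testing against suitably supported dilated cut-offs. First I would use $\Ab\in\Fc_{\Ac_1}\subset\Rc^T$ (Theorem \ref{TRich}) to pick $h\subset g$ with $\Ab_h\in\Fc_h^T$, so that $\Ws^c(\Ab_h),\Ws^{\pm}(\Ab_h)$ exist. Corollary \ref{CComm1} gives $[\Ab,\{\varphi_nI\}]\in\Jc^T$ for every $\varphi\in C(\dot\Rb)$, hence $\Ab_h\in\Lc_h^T$, and we sit inside the central algebra $\Cc_h^{\Jc}\cong C(\dot\Rb)$ with maximal ideal space $\dot\Rb$. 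The mechanism I would reuse is: if $\varphi(s)=1$ and $\{\varphi_{h_n}I\}\,\mathbb{X}_h\in\Jc_h^T$, then $\Phi_s(\mathbb{X}_h)=0$, since $\{\varphi_{h_n}I\}$ is central with $\Phi_s(\{\varphi_{h_n}I\})$ the local unit. Because $\Ab\mapsto\Phi_s(\Ab_h)$ and each candidate right-hand side depend linearly and continuously on $\Ab$, and because the finite sections $\{P_n\tilde A_nP_n+\mu Q_n\}$ with $\tilde A_n$ a \emph{monomial} in the generators $aI,W^0(b),K,P_n$ and $\mu\in\Cb$ span a dense subspace of $\Fc_{\Ac_1}$, it suffices to check (a) and (b) on such monomial generators (case (c) I settle directly), using $Q_n=I-P_n$ and the supports of $\varphi_n=Z_n\varphi Z_n^{-1}$.

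For $|s|>1$ take $\varphi$ with $\varphi(s)=1$ supported off $[-1,1]$; then $\supp\varphi_n$ misses $[-n,n]$, so $\varphi_nP_n=0$, $\varphi_n(I-P_n)=\varphi_n$, whence $\varphi_nA_n=\mu_\Ab\varphi_n$ and $\{\varphi_{h_n}I\}(\Ab_h-\mu_\Ab\{I\})=0$, giving (c) for all of $\Fc_{\Ac_1}$. For $|s|<1$ pick $\varphi$ supported in $(-1,1)$ with $\varphi(s)=1$, so $\varphi_nP_n=\varphi_n$ and $\varphi_nQ_n=0$. With $\tilde A:=\Ws^c\{\tilde A_n\}=\Ws^c(\Ab_h)$ a short computation gives
\[\varphi_n(A_n-\tilde A)=-[\varphi_nI,\tilde A_n]Q_n+\varphi_n(\tilde A_n-\tilde A).\]
The first summand is in $\Jc_h^T$ by Corollary \ref{CComm1}. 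In the second, $\tilde A_n-\tilde A$ is a sum of monomials each carrying a factor $P_n-I=-Q_n$; commuting $\varphi_n$ rightward through the convolution and multiplication factors — each commutator being $\Pc$-compact-lifting-valued for $b=\chi_\lambda$ (Proposition \ref{PComm1}(b)) or norm-null for $b\in[\SO,\BUC]_p$ (Proposition \ref{PComm1}(a)) — until it reaches the $Q_n$ and is killed by $\varphi_nQ_n=0$, puts this summand in $\Jc_h^T$ too. This yields (a).

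Case (b) is the technical core and the main obstacle. For $s=\pm1$ I localize with $\varphi$ supported in a small neighbourhood of $\pm1$, so that $\supp\varphi_n$ straddles the endpoint $\pm n$. Using $V_{\mp n}P_nV_{\pm n}\to\chi_{\mp}I$ and shift invariance ($\Ws^{\pm}\{W^0(b)\}=W^0(b)$), for a convolution generator one finds $\Ws^{\pm}(\Ab_h)=\chi_{\mp}W^0(b)\chi_{\mp}+\mu_\Ab\chi_{\pm}$, and the difference collapses (for $s=+1$) to a single separated-support term $-\varphi_nP_nW^0(b)\chi_{(-\infty,-n)}$, the two cut-offs a distance of order $n$ apart. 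When $b\in[\SO,\BUC]_p$ the operator $W^0(b)$ is quasi-banded (Corollary \ref{CConvClass}), $\chi_+W^0(b)\chi_-I$ is $\Pc$-compact, and the term vanishes in norm. The difficulty is exactly $b=\chi_\lambda$, where $W^0(\chi_\lambda)$ is \emph{not} quasi-banded (Lemma \ref{LSIO}) and the term does not decay: here I would split $P_n=I-Q_n$, turn the $\varphi_n$-part into the commutator $[\varphi_nI,W^0(\chi_\lambda)]=\{E_n^\lambda(K)\}$ of Proposition \ref{PComm1}(b), and, via the identities $U_tZ_n=Z_nU_{tn}$ and $Z_n^{-1}W^0(\chi_\lambda)Z_n=W^0(\chi_{\lambda n})$, recognise the remaining $\varphi_nQ_nW^0(\chi_\lambda)\chi_{(-\infty,-n)}$ as a lifting $\{E_n^\lambda(K')\}$ of a \emph{fixed} compact operator $K'$; both lie in $\Jc_h^T$. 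The genuinely delicate point is the bookkeeping of the straddling cut-off against the conjugations $V_{\pm n},Z_n,U_t$; the multiplication generators $a\in[\PC,\SO,L_0^\infty]$ are disposed of by the elementary estimate $\sup_{|x|>(1-\delta)n}|a(x)-a(\pm\infty)|\to0$.

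Finally, for the regularizer statement I would note that $C:=\Ws^c(\Ab_h)$ lies in the algebra generated by the symbols, so $\{C\}\in\Ac_1\subset\Rc^T$ and, refining $h$ once more, $\{B\}_h\in\Fc_h^T$. To place it in $\Lc_h^T$ I must show $\{[\varphi_nI,B]\}\in\Jc^T$. With $BC-I,CB-I\in\pc{\Xb}$ and $\{[\varphi_nI,C]\}\in\Jc^T$ (Corollary \ref{CComm1}), I would sandwich: $C[\varphi_nI,B]C\equiv-[\varphi_nI,C]$ and then $B\bigl(C[\varphi_nI,B]C\bigr)B\equiv[\varphi_nI,B]$, both congruences modulo sequences of the form (constant $\pc{\Xb}$-operator)$\times$(bounded sequence), which are in $\Jc^T$; hence $\{[\varphi_nI,B]\}\in\Jc^T$. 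The same sandwich, run in the shifted frame with the expanded cut-offs $\psi_n=V_{\mp n}\varphi_nV_{\pm n}$ and $[\psi_nI,\cdot]$ replacing $[\varphi_nI,\cdot]$, handles $\{V_{\pm h_n}B_{\pm}V_{\mp h_n}\}_h$. I expect case (b) — precisely the identification of the non-quasi-banded error terms as $\Jc^T$-liftings — to be the crux of the whole argument.
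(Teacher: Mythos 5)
Your overall architecture (pass to a $T$-structured subsequence, get local type from Corollary \ref{CComm1}, test local cosets against dilated cut-offs $\varphi_n$ with $\varphi(s)=1$) is the paper's, and your cases (a) and (c) are sound. But in case (b) your treatment of the multiplication generators contains a step that fails: you dispose of $a\in[\PC,\SO,L_0^\infty]$ ``by the elementary estimate $\sup_{|x|>(1-\delta)n}|a(x)-a(\pm\infty)|\to0$''. For $a\in\SO^\infty$ the limits $a(\pm\infty)$ do not exist (take $a(x)=\sin(\ln\ln(e+|x|))$), and indeed $\Ws^{\pm}(\{aI\}_h)$ is $a_h^{\pm}I$ with $a_h^{\pm}$ a \emph{subsequential} limit $\lim_n a(\pm h_n)$ depending on $h$ --- this is precisely why the richness machinery is needed. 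The correct argument (the paper's) compares $a$ with $a_h$ on the dilated window $\supp\varphi_{h_n}\subset[h_n/2,3h_n/2]$ and uses $\osc(a,[n/2,3n/2])\to0$, which holds for $\SO^\infty$ even though no limit at infinity exists; your global estimate over $\{|x|>(1-\delta)n\}$ is the wrong quantity even for functions that do have limits. A second gap: in the regularizer statement you obtain $\{B\}_h\in\Fc_h^T$ ``by refining $h$ once more'' after observing $\{C\}=\{\Ws^c(\Ab_h)\}\in\Rc^T$. Richness of $\{C\}$ gives nothing about $\{B\}$; what is needed (and what the paper supplies) is that applying the snapshot homomorphisms to $BC=I+K$, $CB=I+K'$ with $K,K'\in\pc{\Xb}$ forces the transformed copies of $B$ to converge to the inverses of the (then necessarily invertible) snapshots of $\{C\}_h$ --- no further refinement. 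Once $\{B\}_h\in\Fc_h^T$ is secured, your sandwich argument for local type is correct, though the paper gets the same conclusion in one line from the inverse closedness of $\Lc_h^T/\Jc_h^T$ in $\Fc_h^T/\Jc_h^T$.

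Beyond these gaps, note that your route through case (b) is substantially harder than necessary. You verify (b) directly on finite-section monomials $\{P_n\tilde A_nP_n+\mu Q_n\}$, which forces you to confront the separated-support cross terms $\varphi_n\chi_{[-n,n]}W^0(\chi_\lambda)\chi_{(-\infty,-n)}$ head-on; your identification of these as liftings $\{E_n^\lambda(K')\}$ of a fixed compact operator is correct (the compactly supported factor $\varphi_n\chi_{(n,\infty)}$ is essential there --- without it the operator $\chi_{(1,\infty)}W^0(\chi_+)\chi_{(-\infty,-1)}I$ is \emph{not} compact), but the paper never meets these terms: it proves (b) for the \emph{generators of} $\Ac_1$ (where $\{W^0(b)\}$ satisfies (b) trivially by shift invariance, and $\Phi_{\pm1}(\{P_n\})=\Phi_{\pm1}(\{V_{\pm n}\chi_{\mp}V_{\mp n}\})$ is immediate) and then extends to products using that $\Phi_{\pm1}$, $\Ws^{\pm}$ and $V_{\pm h_n}(\cdot)V_{\mp h_n}$ are all homomorphisms. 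This multiplicativity step is also the missing link in your own reduction: checking (b) on single generators does not by itself cover general monomials unless you invoke it.
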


\begin{proof}
Since $\Ab$ is rich we can always easily pass to $T$-structured subsequences $\Ab_h$, and then
Corollary \ref{CComm1}  yields $\Ab_h\in\Lc_{h}^{T}$. Moreover, it is easily seen that
$\{\Ws^{c}(\Ab_{h})\}$ and  $\{V_{\pm h_{n}}\Ws^{\pm1}(\Ab_{h})V_{\mp h_{n}}\}$  belong 
to $\Fc^T_h$ as well.

Let $|s|<1$ and choose a continuous function $\varphi$ such that
$\supp\varphi\subset[-1,1]$ and $\varphi(s)=1$. 
Then $\Phi_{s}(\{\varphi_{h_{n}}I\}+\Jc_{h}^{T})=\Phi_{s}(\{I\}+\Jc_{h}^{T})$
and $\Phi_{s}(\{\varphi_{h_{n}}(I-P_{h_{n}})\}+\Jc_{h}^{T})=0$ which
implies $\Phi_{s}(\{P_{h_{n}}\}+\Jc_{h}^{T})=\Phi_{s}(\{I\}+\Jc_{h}^{T})$.
Thus, 
\[\Phi_{s}(\Ab_{h}+\Jc_{h}^{T})=\Phi_{s}(\{\Ws^{c}(\Ab_{h})\}+\Jc_{h}^{T}).\]
Actually, this already holds for the full sequence $\Ab$. 

For the local algebra at $s=1$, we will check the assertion even for all sequences in 
$\Ac_1$, and we start with the generators. It holds
that $\Phi_{1}(\{P_{h_{n}}\}+\Jc_{h}^{T})=\Phi_{1}(\{V_{h_{n}}\chi_{-}V_{-h_{n}}\}+\Jc_{h}^{T})$
and clearly equality (b) also holds for all $\Ab_{h}=\{W^{0}(b)\}_h$ 
with $b$ a Fourier multiplier. 
Suppose that $a\in[\PC,\SO, L^\infty_0]$ and $\Ab_{h}=\{aI\}_h\in\Fc^{T}_{h}$. Then
$\Ws^+(\Ab_{h})$ exists, and  in particular, $a_h:=\lim_{n\to\infty}a(h_n)\in\Cb$ exists.
Defining intervals $J_n:=[n/2,3n/2]$, one can choose
a continuous function $\varphi$ which is supported in $J_1$ and $\varphi(1)=\|\varphi\|=1$. 
Then, actually, it holds much more, namely
$\osc(a,J_n)\to 0$ as $n\to\infty$, and hence $\|(a-a_h)\varphi_n\|\leq \|(a-a_h)\chi_{J_n}\|\to 0$. 
Consequently, 
\begin{align*}
\Phi_1(\{aI\}_h+\Jc_{h}^{T})&=\Phi_1(\{a\varphi_n I\}_h+\Jc_{h}^{T})\\
&=\Phi_1(\{a_h\varphi_n I\}_h+\Jc_{h}^{T})+\Phi_1(\{(a-a_h)\varphi_n I\}_h+\Jc_{h}^{T})\\
&=\Phi_1(\{a_hI\}_h+\Jc_{h}^{T})+0.
\end{align*}
Thus, we have (b) for all generators of $\Ac_1$. An arbitrary sequence $\Ab\in\Ac_1$ 
is always the norm limit of a sequence $(\Ab^{(m)})$ consisting of finite linear combinations 
and products of the generators. By a standard diagonal argument one can pass to a
sequence $h\subset g$ such that $\Ab_h$ and all $\Ab^{(m)}_{h}$ belong to $\Fc^T_{h}$.
Since (b) is true for all approximations $\Ab_h^{(m)}$ it must hold for $\Ab_h$ as well.

Lastly, if $|s|>1$ then we benefit from the restriction to the finite section algebra $\Fc_{\Ac_1}$,
apply $\Phi_{s}(\{P_{h_{n}}\}+\Jc_{h}^{T})=\Phi_{s}(\{0\}+\Jc_{h}^{T})$,
and arrive at (even for arbitrary $h$)
\[\Phi_{s}(\{A_{h_{n}}\}+\Jc_{h}^{T})
=\Phi_{s}(\{P_{h_{n}}A_{h_{n}}P_{h_{n}}+\mu_\Ab(I-P_{h_{n}})\}+\Jc_{h}^{T})
=\mu_\Ab\Phi_{s}(\{I\}+\Jc_{h}^{T}).\]

Now let $B$ be a $\Pc$-regularizer of $\Ws^c(\Ab_h)$, i.e. 
$\{B\}\{\Ws^c(\Ab_h)\}=\{I\}+\{K\}$ with a $\Pc$-compact $K$. One easily
checks that this yields the invertibility of all snapshots $\Ws^\pm(\{\Ws^c(\Ab_h)\}_h)$
and $\Hs^s(\{\Ws^c(\Ab_h)\}_h)$, $s\in\Rb$, and that their respective inverses are the snapshots
of $\{B\}_h$. Thus $\{B\}_h\in\Fc^T_h$, even $\{B\}_h\in \Lc^T_h$ due to the inverse closedness
of the algebra $\mathcal{L}_{h}^{T}/\Jc_{h}^{T}$.
The remaining cases are treated similarly.
\end{proof}

Applying the Allan-Douglas principle we obtain

\begin{prop}
Let $\Ab\in\Fc_{\Ac_1}$ and all the snapshots $\Ws^{t}(\Ab_{g})$, $t\in T_{\Pc}$, $g\in\Hb_\Ab$,
be $\Pc$-Fredholm. Then every $T$-structured subsequence of $\Ab$ has a $\Jc^T$-Fredholm subsequence 
$\Ab_h$, i.e. $\Ab_h+\Jc^T_h$ is invertible in $\Fc^T_h/\Jc^T_h$.
\end{prop}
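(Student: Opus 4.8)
The plan is to fix a $T$-structured subsequence of $\Ab$, refine it once more so that the localization of Proposition~\ref{localization} becomes available, and then run Allan's local principle over the central subalgebra $\Cc_h^{\Jc}\cong C(\dot\Rb)$, checking invertibility of the local coset $\Phi_s(\Ab_h+\Jc_h^T)$ at every $s\in\dot\Rb$. First I would use Proposition~\ref{localization} to pass from the given $T$-structured subsequence to a subsequence $h$ for which $\Ab_h\in\Lc_h^T$ and the three local identities (a)--(c) hold; since $h\in\Hb_\Ab$, the hypothesis guarantees that $\Ws^c(\Ab_h)$ and $\Ws^\pm(\Ab_h)$ are $\Pc$-Fredholm. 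Because $\Lc_h^T/\Jc_h^T$ is inverse closed in $\Fc_h^T/\Jc_h^T$ and $\Ab_h\in\Lc_h^T$, it suffices to prove that $\Ab_h+\Jc_h^T$ is invertible in $\Lc_h^T/\Jc_h^T$, and by Allan's local principle this is equivalent to the invertibility of $\Phi_s(\Ab_h+\Jc_h^T)$ for each $s\in\dot\Rb$.

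For $|s|<1$, Proposition~\ref{localization}(a) identifies the coset with $\Phi_s(\{\Ws^c(\Ab_h)\}+\Jc_h^T)$. Letting $B$ be a $\Pc$-regularizer of $\Ws^c(\Ab_h)$, which by the last assertion of Proposition~\ref{localization} lifts to $\{B\}_h\in\Lc_h^T$, the operator $B\Ws^c(\Ab_h)-I$ is $\Pc$-compact, so $\{B\Ws^c(\Ab_h)-I\}_h=\{E_{h_n}^c(B\Ws^c(\Ab_h)-I)\}_h\in\Jc_h^T$; hence $\Phi_s(\{B\}_h)$ is a local inverse. For $s=\pm1$, Proposition~\ref{localization}(b) identifies the coset with that of $\{V_{\pm h_n}\Ws^\pm(\Ab_h)V_{\mp h_n}\}$, and choosing $\Pc$-regularizers $B_\pm$ of the $\Pc$-Fredholm operators $\Ws^\pm(\Ab_h)$ I would conjugate: since $\{V_{\pm h_n}(B_\pm\Ws^\pm(\Ab_h)-I)V_{\mp h_n}\}_h=\{E_{h_n}^\pm(B_\pm\Ws^\pm(\Ab_h)-I)\}_h\in\Jc_h^T$, the liftable sequence $\{V_{\pm h_n}B_\pm V_{\mp h_n}\}_h$ is a local inverse. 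Thus $\Phi_s(\Ab_h+\Jc_h^T)$ is invertible for all $|s|\leq 1$.

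The step I expect to be the main obstacle is the region $|s|>1$, where Proposition~\ref{localization}(c) gives $\Phi_s(\Ab_h+\Jc_h^T)=\mu_\Ab\,\Phi_s(\{I\}+\Jc_h^T)$, invertible exactly when $\mu_\Ab\neq0$. To force this from the hypotheses I would compute the $\Ws^+$-snapshot of $\Ab=\{P_nA_nP_n+\mu_\Ab(I-P_n)\}$: using $\Ws^+\{P_n\}=\chi_{(-\infty,0]}I$, $\Ws^+\{I-P_n\}=\chi_{[0,\infty)}I$ and multiplicativity of $\Ws^+$, one gets $\Ws^+(\Ab_h)=\chi_{(-\infty,0]}B\chi_{(-\infty,0]}I+\mu_\Ab\chi_{[0,\infty)}I$ with $B:=\Ws^+(\{A_n\}_h)$, so near $+\infty$ the snapshot acts as $\mu_\Ab I$. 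If $\mu_\Ab=0$ then $\chi_{[0,\infty)}\Ws^+(\Ab_h)=0$, hence for any $\Pc$-regularizer $R$ one would have $\chi_{[0,\infty)}(\Ws^+(\Ab_h)R-I)=-\chi_{[0,\infty)}I$; but $\Ws^+(\Ab_h)R-I\in\pc{\Xb}$ and $\chi_{[0,\infty)}I\in\pb{\Xb}$, so by the ideal property the left-hand side is $\Pc$-compact, forcing $\chi_{[0,\infty)}I$ to be $\Pc$-compact, which contradicts $\|(I-P_m)\chi_{[0,\infty)}I\|=1$ for every $m$. Hence $\Ws^+(\Ab_h)$ could not be $\Pc$-Fredholm, contrary to the hypothesis, so $\mu_\Ab\neq0$ and the coset is invertible for $|s|>1$ as well.

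Having verified invertibility of $\Phi_s(\Ab_h+\Jc_h^T)$ for every $s\in\dot\Rb$, Allan's local principle gives that $\Ab_h+\Jc_h^T$ is invertible in $\Lc_h^T/\Jc_h^T$, and inverse closedness promotes this to invertibility in $\Fc_h^T/\Jc_h^T$; that is, $\Ab_h$ is $\Jc^T$-Fredholm, as required. The genuinely delicate point is the $|s|>1$ analysis, namely extracting the half-line structure of $\Ws^\pm(\Ab_h)$ and converting $\mu_\Ab=0$ into a failure of $\Pc$-regularizability, whereas the cases $|s|\leq1$ are routine once the regularizers are known to lie in $\Lc_h^T$.
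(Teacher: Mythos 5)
Your proof is correct and follows essentially the same route as the paper: Allan--Douglas localization over $\Cc_h^{\Jc}\cong C(\dot{\Rb})$, using Proposition~\ref{localization} to identify the local cosets, conjugated $\Pc$-regularizers as local inverses for $|s|\leq 1$, and the half-line structure $\chi_-\Ws^+(\Ab_h)\chi_-I+\mu_\Ab\chi_+I$ to force $\mu_\Ab\neq 0$ for $|s|>1$. The only difference is that you spell out in detail the contradiction ($\chi_+I$ would be $\Pc$-compact) that the paper leaves implicit.
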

\begin{proof}
Let $\Ab_g$ be $T$-structured. Proposition \ref{localization} yields a subsequence $\Ab_h$.
By the Allan-Douglas local principle, it is enough to show that $\Phi_{s}(\mathbb{A}_{h}+\Jc_{h}^{T})$ 
is invertible for every $s\in\dot{\Rb}$ providing the snapshots $\Ws^{t}(\Ab_{h})$, $t\in T_{\mathcal{P}}$,
are $\mathcal{P}$-Fredholm.

Let $s=1$, then 
$\Phi_{1}(\mathbb{A}_{h}+\Jc_{h}^{T})=\Phi_{1}(\{V_{h_{n}}\Ws^{+}(\Ab_{h})V_{-h_{n}}\}+\Jc_{h}^{T})$,
and the $\Pc$-regularizer $B$ of $\Ws^{+}(\Ab_{h})$ provides an inverse 
$\Phi_{1}(\{V_{h_{n}}BV_{-h_{n}}\}+\Jc_{h}^{T})$ of $\Phi_{1}(\mathbb{A}_{h}+\Jc_{h}^{T})$. 
Analogously, the $\mathcal{P}$-Fredholmness
of $\Ws^{-}(\Ab_{h})$ and $\Ws^{c}(\Ab_{h})$ imply the invertibility
of the local elements at $s=-1$ and $|s|<1$, respectively. Finally,
if $|s|>1$ the $\mathcal{P}$-Fredholmness of $\Ws^{+}(\Ab_{h})$,
which is of the form $\chi_{-}\Ws^{+}(\Ab_{h})\chi_{-}I+\mu_\Ab(1-\chi_{-})I$
implies that $\mu_\Ab$ is non-zero and therefore the claim.
\end{proof}
Summarizing the previous results together with Theorem \ref{TRich}
we obtain 

\begin{thm}
\label{CRich-1} For $\Ab=\{A_{n}\}\in\Fc_{\Ac_1}$ it holds that 
\begin{itemize}
\item If all snapshots of $\Ab$ are Fredholm then $\Ab$ has finite $\alpha$-
and $\beta$-number given by 
\begin{align*}
\alpha(\Ab) & =\max_{h\in\Hb_{\Ab}}\left[\sum_{t=-,c,+}\dim\ker\Ws^{t}(\Ab_{h})+\sum_{t\in\Rb}\dim\ker\Hs^{t}(\Ab_{h})\right],\\
\beta(\Ab) & =\max_{h\in\Hb_{\Ab}}\left[\sum_{t=-,c,+}\dim\coker\Ws^{t}(\Ab_{h})+\sum_{t\in\Rb}\dim\coker\Hs^{t}(\Ab_{h})\right].
\end{align*}
\item If one snapshot is not Fredholm then $\mathbb{A}$ cannot have both
finite $\alpha-$ and $\beta-$ number.
\item $\Ab$ is stable if and only if all snapshots are invertible. 
\end{itemize}
\end{thm}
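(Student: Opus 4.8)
The plan is to obtain all three assertions directly from Theorem~\ref{TRich}, using the preceding proposition as the bridge that turns the $\Pc$-Fredholmness of the $\Ws$-snapshots into the existence of the $\Jc^T$-Fredholm subsequences demanded there. It is worth recalling first that in the present concrete setting $T=T_\Pc\cup T_*$ with $T_\Pc=\{-,c,+\}$ and $T_*=\Rb$, so that the abstract snapshot family $\Ws^t$, $t\in T$, of Theorem~\ref{TRich} is here realised by the three snapshots $\Ws^-,\Ws^c,\Ws^+$ (carrying the $\Pc$-Fredholm property) together with the family $\Hs^t$, $t\in\Rb$ (carrying the classical Fredholm property). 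Consequently the single sum $\sum_{t\in T}$ in the $\alpha$- and $\beta$-formulas of Theorem~\ref{TRich} splits exactly into the displayed $\sum_{t=-,c,+}$ over the $\Ws$-snapshots plus $\sum_{t\in\Rb}$ over the $\Hs$-snapshots.

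For the first item I would assume that all snapshots of $\Ab$ are Fredholm. In particular the three snapshots $\Ws^t(\Ab_g)$, $t\in T_\Pc$, $g\in\Hb_\Ab$, are $\Pc$-Fredholm, so the preceding proposition provides, for every $T$-structured subsequence of $\Ab$, a further subsequence $\Ab_h$ that is $\Jc^T$-Fredholm. As recorded in Section~\ref{SStruct}, a $\Jc^T$-Fredholm sequence automatically has all its snapshots Fredholm, i.e.\ it is regular; hence each such $\Ab_h$ is regularly $\Jc^T$-Fredholm. This is precisely the hypothesis of Theorem~\ref{TRich}(2), whose conclusion yields the finiteness of $\alpha(\Ab)$ and $\beta(\Ab)$ and the stated formulas once the sum over $T$ is decomposed as above.

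The second item is nothing but Theorem~\ref{TRich}(3): if some snapshot of $\Ab$ fails to be Fredholm, then $\Ab$ cannot have both a finite $\alpha$- and a finite $\beta$-number. For the equivalence in the third item I would argue both directions through Theorem~\ref{TRich}(4). If $\Ab$ is stable, then all snapshots are invertible by the implication (a)$\Rightarrow$(d) of that theorem. Conversely, if all snapshots are invertible, then in particular the snapshots $\Ws^t(\Ab_g)$, $t\in T_\Pc$, are invertible, hence $\Pc$-Fredholm, and the preceding proposition again furnishes a $\Jc^T$-Fredholm subsequence of every $T$-structured subsequence; together with the invertibility of all snapshots this is exactly condition (e) of Theorem~\ref{TRich}(4), so the implication (e)$\Rightarrow$(a) gives stability.

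Since the genuinely nontrivial work---the Allan-Douglas localization---was already carried out in the preceding proposition, the present statement is essentially a bookkeeping synthesis, and the only thing to get right is the matching of hypotheses. Concretely I expect the main points to be: that \emph{all} snapshots being Fredholm (not merely the three $\Ws$-snapshots) is what is needed to invoke regularity in Theorem~\ref{TRich}(2); that $\Jc^T$-Fredholmness of the extracted subsequence upgrades to \emph{regular} $\Jc^T$-Fredholmness for free; and that in the third item the invertibility hypothesis must be fed into condition (e) of Theorem~\ref{TRich}(4) rather than (d), since the preceding proposition only delivers $\Jc^T$-Fredholmness along a subsequence.
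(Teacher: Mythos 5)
Your proposal is correct and follows exactly the route the paper takes: the paper's entire proof is the single line ``Summarizing the previous results together with Theorem~\ref{TRich} we obtain'', i.e.\ precisely the combination of the preceding localization proposition (turning $\Pc$-Fredholmness of the $\Ws$-snapshots into $\Jc^T$-Fredholm subsequences) with Theorem~\ref{TRich} that you spell out. The only slight imprecision is in attributing regularity of the extracted subsequences to their $\Jc^T$-Fredholmness alone (which for $t\in T_\Pc$ only yields \emph{$\Pc$-Fredholm} $\Ws$-snapshots, not Fredholm ones); regularity in fact comes from the theorem's hypothesis that \emph{all} snapshots of $\Ab$ are Fredholm, as you yourself correctly note in your closing paragraph.
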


This particularly yields our motivating Theorem \ref{TIntro}.

\subsection{The flip and Hankel operators}\label{SFlip}
In this very last step we are going to include the flip operator
\[J:L^p(\Rb)\to L^p(\Rb),\; (Jf)(x)=f(-x)\]
and by this get access also to Hankel operators which are, by definition, of the form
$H(b)=\chi_+W^0(b)\chi_-J$. More precisely, we want to consider the algebra
\begin{equation}\label{EBc}\begin{split}
\Bc:=\alge&\left\{ \{J\},\,\{aI\},\,\{W^0(b)\},\,\{K\},\,\{P_n\}:\right. \\
 & \;\;\left. a\in[\PC,\SO,L_0^{\infty}],\; b\in[\PC^\lambda,\SO,\BUC]_p,\;\lambda\in\Rb,
 \;K\in\pc{X}\right\}.
\end{split}\end{equation}
Unfortunately, this is not possible directly by the above approach because $\{J\}$ is not $T$-rich as 
the $\Ws^\pm$- and the $\Hs^t$-snapshots, $t\neq0$, do not exist for any subsequence. 
The point is that the local properties at the ``directions'' $t=+$ and \mbox{$t=-$} (as 
well as at different $t\in\Rb$) are separated in case of $\Ac$-sequences, whereas $\{J\}$ 
now connects the directions $+$ and $-$ (resp. $t$ and $-t$).
Therefore we introduce an adapted symmetrized framework which
combines the snapshots $\Ws^\pm$, as well as the pairs $\Hs^s$, $\Hs^{-s}$ $(s>0)$
into operator-matrix-valued snapshots $\Ws^*$, $\Hs^{s,*}$, respectively. These mappings are 
again algebra homomorphisms $\Fc^S_h\to \lb{L^p(\Rb)\times L^p(\Rb)}$ on certain 
Banach algebras $\Fc_h^S$ of $S$-structured (sub)sequences, which particularly cover all 
$\Ab_h\in\Fc^T_h$ ($T$-structured in the former sense) and $\{J\}$, hence all
generators of $\Bc$, with
\begin{align}\label{EWstar}
\Ws^*(\Ab_h)&=\begin{pmatrix}\Ws^+(\Ab_h)& 0 \\0&\Ws^-(\Ab_h)\end{pmatrix}
& \Ws^*\{J\}&=\begin{pmatrix}0& J \\J&0\end{pmatrix} \\
\notag
\Hs^{s,*}(\Ab_h)&=\begin{pmatrix}\Hs^s(\Ab_h)& 0 \\0&\Hs^{-s}(\Ab_h)\end{pmatrix}
& \Hs^{s,*}\{J\}&=\begin{pmatrix}0& J \\J&0\end{pmatrix} \quad(s>0).
\end{align}
The homomorphisms $\Ws^c$ and $\Hs^0$ remain as before. Then we get for 
the finite section algebra $\Fc_{\Bc}$ that is generated by all
$\{P_nA_nP_n+ (I-P_n)\}$ with $\{A_n\}\in\Bc$	the following.
\begin{thm}\label{TFlip}
For $\Ab=\{A_{n}\}\in\Fc_{\Bc}$ it holds that 
\begin{itemize}
\item If all snapshots of $\Ab$ are Fredholm then $\Ab$ has finite $\alpha$-
and $\beta$-number with
\begin{align*}
\alpha(\Ab) =\max_{h\in\Hb_{\Ab}}\Big[\dim\ker\Ws^{c}(\Ab_{h})&+\dim\ker\Hs^{0}(\Ab_{h})\\
&+\dim\ker\Ws^{*}(\Ab_{h})+\sum_{s>0}\dim\ker\Hs^{s,*}(\Ab_{h})\Big]
\end{align*}
(similarly for $\beta(\Ab)$) and for every $S$-structured subsequence $\Ab_h$
\[\lim_{n\to\infty}\ind A_{h_n} = \ind \Ws^{c}(\Ab_{h}) + \ind \Hs^{0}(\Ab_{h})
+\ind\Ws^{*}(\Ab_{h})+\sum_{s>0}\ind\Hs^{s,*}(\Ab_{h}).\]
\item If one snapshot is not Fredholm then $\mathbb{A}$ cannot have both
finite $\alpha-$, $\beta-$ number.
\item $\Ab$ is stable if and only if all snapshots are invertible. 
\end{itemize}
\end{thm}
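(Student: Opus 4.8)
The plan is to mimic the development that culminated in Theorem \ref{CRich-1}, but carried out inside the symmetrized framework sketched just before \eqref{EWstar}. First I would record that this setup is a genuine instance of the abstract $T$-structured machinery of Section \ref{SStruct}: the index set $S$ folds the $\Ac$-directions, keeping $c$ and $0$, replacing the pair $\{+,-\}$ by a single direction $*$, and replacing each pair $\{s,-s\}$, $s>0$, by one direction; the snapshot spaces are $L^p(\Rb)$ at $c$ and $0$ and $L^p(\Rb)\times L^p(\Rb)$ at $*$ and at each $s>0$. The transformations at the paired directions are obtained by welding the two one-sided isomorphisms of the $\Ac$-setting (the conjugations $A\mapsto V_{\mp n}AV_{\pm n}$ for $\Ws^*$, and $A\mapsto Z_n^{-1}U_{\pm s}AU_{\mp s}Z_n$ for $\Hs^{s,*}$) into a single Banach-algebra isomorphism onto $\lb{L^p(\Rb)}$, using a fixed identification $L^p(\Rb)\cong L^p(\Rb)\times L^p(\Rb)$. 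With this in hand, conditions (I) and (II) follow from their $\Ac$-counterparts in Lemma \ref{LSepC}, since the matrix directions merely bundle two old directions and the liftings of the ($\Pc$-)compact $2\times2$ matrices remain invisible from every other direction.

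Next I would verify that all generators of $\Bc$ are $S$-rich and compute their snapshots. For the generators already in $\Ac$ the $S$-snapshots are the block-diagonal combinations of the $T$-snapshots computed in Lemmas \ref{LRMultOps} and \ref{LW0bComm} (with $\Ws^c$, $\Hs^0$ unchanged), so richness is inherited from $\Rc^T$. For $\{J\}$ the crucial relations are $JV_s=V_{-s}J$, $JU_t=U_{-t}J$ and $JZ_n=Z_nJ$: the first two show that under the welded conjugations the flip swaps the two slots, which produces the antidiagonal snapshots $\Ws^*\{J\}$, $\Hs^{s,*}\{J\}$ of \eqref{EWstar} together with $\Ws^c\{J\}=\Hs^0\{J\}=J$, and they simultaneously explain why $\Hs^t\{J\}$ cannot exist for $t\neq0$ (the relevant conjugate is multiplication by $e^{2\ii tnx}$, which has no $*$-strong limit). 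Since $\{J\}$ is a constant sequence its snapshots exist outright, so $\{J\}\in\Fc^S\subset\Rc^S$, and because $\Rc^S$ is a Banach algebra the whole finite-section algebra $\Fc_{\Bc}$ lies in $\Rc^S$.

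At this point the $S$-analogue of Theorem \ref{TRich} already delivers the $\alpha$- and $\beta$-number formulas, the ``not both finite'' statement, and the reduction of stability to ``$\Jc^S$-Fredholm plus all snapshots invertible''; the index formula for $S$-structured subsequences is the additivity $\lim_{n\to\infty}\ind A_{h_n}=\sum_t\ind\Ws^t(\Ab_h)$ of Theorem \ref{TTstructMain} read over the index set $S$. It therefore remains to establish $\Jc^S$-Fredholmness from $\Pc$-Fredholmness of the snapshots, and this is where the flip forces the one genuine change. Because $J\{\varphi_nI\}=\{\widetilde\varphi_nI\}J$ with $\widetilde\varphi(x)=\varphi(-x)$, the flip commutes with $\{\varphi_nI\}$ only for even $\varphi$, so I would localize over the central subalgebra $\Cc^{\mathrm{ev}}$ of even expanded functions, whose maximal ideal space is the folded line $[0,\infty]$. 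Re-running Proposition \ref{localization} over $[0,\infty]$ identifies the local coset at $s\in[0,1)$ with the scalar coset of $\Ws^c(\Ab_h)$ (which may itself contain $J$, but still acts on the single space), the coset at $s=1$ with the $2\times2$ coset of $\Ws^*(\Ab_h)$ (the two directions $\pm\infty$, interchanged by $J$, are glued here), and the coset at $s\in(1,\infty]$ with $\mu_\Ab I$; Allan's principle then yields $\Jc^S$-Fredholmness exactly when $\Ws^c$ and $\Ws^*$ are $\Pc$-Fredholm and $\mu_\Ab\neq0$, the last being automatic on $\Fc_{\Bc}$ and otherwise forced by the structure $\chi_-\Ws^+\chi_-+\mu_\Ab(1-\chi_-)$ of the $\Pc$-Fredholm matrix snapshot.

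I expect the main obstacle to be the rigorous construction of the welded isomorphisms $E_n^*$, $E_n^{s,*}$ together with the verification that the flip becomes precisely the antidiagonal $\left(\begin{smallmatrix}0&J\\J&0\end{smallmatrix}\right)$ with no stray phase factors, i.e. that the two one-sided pictures amalgamate into a single bona fide Banach-algebra isomorphism onto $\lb{L^p(\Rb)}$ satisfying (I) and (II). Once this framework is in place, the folded localization over $[0,\infty]$ and the appeal to the abstract rich-sequence theorem are routine adaptations of the arguments already given for $\Fc_{\Ac_1}$.
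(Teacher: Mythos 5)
Your strategy coincides with the paper's: a symmetrized index set that folds $\{+,-\}$ and each pair $\{s,-s\}$ into single matrix-valued directions, the antidiagonal snapshots $\left(\begin{smallmatrix}0&J\\J&0\end{smallmatrix}\right)$ for the flip, an appeal to the abstract Theorems \ref{TTstructMain} and \ref{TRich} over the new index set, and a final localization over the \emph{even} expanded functions with maximal ideal space $[0,\infty]$ and local representatives $\Ws^{c}$ on $[0,1)$, $\Ws^{*}$ at $1$, and $\mu_\Ab I$ beyond $1$. The one place where you diverge is precisely the step you flag as the main obstacle, and the paper resolves it by a different device than the one you propose: instead of welding the two one-sided conjugations into a Banach-algebra isomorphism $\lb{L^p(\Rb)\times L^p(\Rb)}\to\lb{L^p(\Rb)}$ via an identification $L^p\cong L^p\times L^p$, the paper first replaces $\Ab$ by the doubled sequence $\tilde{A}_n=\mathrm{diag}(A_n,I)$ acting on $L^p(\Rb)\times L^p(\Rb)$ (which visibly preserves stability, the $\alpha$- and $\beta$-numbers and the indices), and then realizes the paired-direction homomorphisms as \emph{automorphisms} of $\lb{L^p(\Rb)\times L^p(\Rb)}$ by conjugating with the explicit involutions $T_1=\left(\begin{smallmatrix}\chi_+I&\chi_-I\\\chi_-I&\chi_+I\end{smallmatrix}\right)$ and $T_2=\left(\begin{smallmatrix}W^0(\chi_+)&W^0(\chi_-)\\W^0(\chi_-)&W^0(\chi_+)\end{smallmatrix}\right)$ followed by $\mathrm{diag}(V_{-n},V_n)$ resp.\ $\mathrm{diag}(Z_n^{-1}U_s,Z_n^{-1}U_{-s})$; the uniform approximate identity is simply $\hat P_n=\mathrm{diag}(P_n,P_n)$. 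This keeps the base space equal to the snapshot spaces, makes conditions (I) and (II) immediate, and the relations you correctly isolate ($J\chi_\pm V_{\pm n}=\chi_\mp V_{\mp n}J$, $JS_2U_sZ_n=R_2U_{-s}Z_nJ$, $JZ_n=Z_nJ$) then yield the antidiagonal snapshots by direct computation with no identification of $L^p$ with $L^p\times L^p$ needed. Everything else in your outline (richness of the generators, failure of $\Hs^t\{J\}$ for $t\neq0$ via the oscillating factor $e^{2\ii tnx}$, the folded localization and the non-vanishing of $\mu_\Ab$ forced by $\Pc$-Fredholmness of $\Ws^{*}$) matches the paper's argument.
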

We only sketch the proof since it follows in large parts the same line as before. 
The main technical ingredients here are special tricky transformations 
which help to overcome the unpleasant ``non-local'' behavior of the flip operator.

\begin{proof}
\textbf{1st step:}
Given $\Ab=\{A_n\}\in\Rc^T$, the sequence $\{\tilde{A}_n\}$ of operators
\[\tilde{A}_n:=\begin{pmatrix}A_n & 0 \\ 0 & I\end{pmatrix}\in\lb{L^p(\Rb)\times L^p(\Rb)}\]
has the same stability properties, the same $\alpha$- and $\beta$-number, and the 
$\tilde{A}_n$ have the same Fredholm indices as the $A_n$, resp. 
Set $R_1:=\chi_+I$, $R_2:=W^0(\chi_+)$, $S_i:=I-R_i$,
\[\hat{P}_n:= \begin{pmatrix}P_n& 0 \\0&P_n\end{pmatrix} \quad\text{and}\quad
T_i:=\begin{pmatrix}R_i & S_i \\ S_i & R_i\end{pmatrix}\in\lb{L^p(\Rb)\times L^p(\Rb)}
\quad(i=1,2).\]
Then $T_i^{-1}=T_i$ and $\hat{\Pc}=(\hat{P}_n)$ is a uniform approximate identity on 
$\lb{L^p(\Rb)\times L^p(\Rb)}$.

\textbf{2nd step:}
Define the index set $S:=\{c,+\}\cup[0,\infty)$ and let $\Fc^S$ be the set of all 
sequences $\Bb:=\{B_n\}\subset\lb{L^p(\Rb)\times L^p(\Rb)}$ such that the limits
\begin{align}
\Ws^{c,*}(\Bb)&:=\phlimn B_n  & \label{EWcs}\\
\Ws^{+,*}(\Bb)&:=\phlimn\begin{pmatrix}V_{-n}& 0 \\0&V_n\end{pmatrix}T_1B_nT_1\begin{pmatrix}V_{n}& 0 \\0&V_{-n}\end{pmatrix} \label{EWps}
  \\
\Hs^{0,*}(\Bb)&:=\slim\begin{pmatrix}Z_n^{-1}& 0 \\0&Z_n^{-1}\end{pmatrix}B_n \begin{pmatrix}Z_{n}& 0 \\0&Z_{n}\end{pmatrix} \label{EH0s} &\\
\Hs^{s,*}(\Bb)&:=\slim\begin{pmatrix}Z_n^{-1}U_{s}& 0 \\0&Z_n^{-1}U_{-s}\end{pmatrix}T_2B_nT_2\begin{pmatrix}U_{-s}Z_{n}& 0 \\0&U_sZ_{n}\end{pmatrix}
&  \quad(s>0)\label{EHss}
\end{align}
exist. One immediately 
checks that also in this new setting $\Fc^S$ the two conditions (I) and (II) in Section \ref{SStruct}
are fulfilled. Hence the construction of compact $\Jc^S$-sequences, the larger algebra 
$\Rc^S$ of $S$-rich sequences and Theorems \ref{TTstructMain} and \ref{TRich} are available.

\begin{figure}
	\centering
  \includegraphics[scale=1.2]{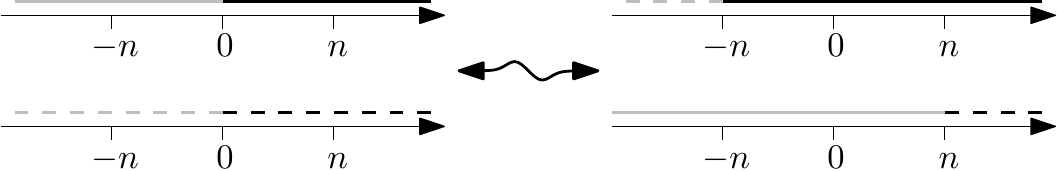}
  \caption{illustration of the transformation in \eqref{EWps}.}
\end{figure}

\textbf{3rd step:}
By straightforward computations we can easily check that for every 
$\Ab_h=\{A_{h_n}\}\in\Fc^T_h$ and also for $\Ab_h=\{J\}_h$ the snapshots 
$\Ws^*(\Ab_h)=\Ws^{+,*}(\tilde{\Ab}_h)$ and 
$\Hs^{s,*}(\Ab_h)=\Hs^{s,*}(\tilde{\Ab}_h)$ are as announced in 
\eqref{EWstar}, and that
\[\Ws^{c,*}(\tilde{\Ab}_h)=\begin{pmatrix}\Ws^c(\Ab_h)& 0 \\0&I\end{pmatrix},\quad
\Hs^{0,*}(\tilde{\Ab}_h)=\begin{pmatrix}\Hs^0(\Ab_h)& 0 \\0&I\end{pmatrix}.\]
Indeed, \eqref{EWcs}, \eqref{EH0s} are clear and \eqref{EWps} 
\[
\phlimn\begin{pmatrix}V_{-h_n}(\chi_+A_{h_n}\chi_++\chi_-)V_{h_n}& V_{-{h_n}}\chi_+A_{h_n}\chi_-V_{-{h_n}}\\
V_{h_n}\chi_-A_{h_n}\chi_+V_{h_n} & V_{h_n}(\chi_-A_{h_n}\chi_-+\chi_+)V_{-{h_n}}\end{pmatrix}
\]
exists by the following simple facts: $\plim V_{\mp n}\chi_{\pm}V_{\pm n}=I$, 
\mbox{$\plim V_{ \mp n}\chi_{\mp}V_{\pm n}=0$,}
$J\chi_{\pm}V_{\pm n}=\chi_{\mp}V_{\mp n}J$ as well as $V_n=V_{2n}V_{-n}=V_{-n}V_{2n}$ etc. 

Finally, \eqref{EHss} has the very similar structure
\[\slim\begin{pmatrix}Z_{h_n}^{-1}U_{s}(R_2A_{h_n}R_2+S_2)U_{-s}Z_{h_n}& 
Z_{h_n}^{-1}U_{s}R_2A_{h_n}S_2U_sZ_{h_n}\\
Z_{h_n}^{-1}U_{-s}S_2A_{h_n}R_2U_{-s}Z_{h_n} & 
Z_{h_n}^{-1}U_{-s}(S_2A_{h_n}S_2+R_2)U_sZ_{h_n}\end{pmatrix}\]
where $\slimnn Z_n^{-1}U_{s}R_2U_{-s}Z_n=\slimnn Z_n^{-1}U_{-s}S_2U_sZ_n=I$, 
$\slimnn Z_n^{-1}U_{s}S_2U_{-s}Z_n=\slimnn Z_n^{-1}U_{-s}R_2U_sZ_n=0$, 
$JS_2U_sZ_n=R_2U_{-s}Z_nJ$ and $JR_2U_{-s}Z_n=S_2U_sZ_nJ$. 
The convergence for $\{A_{h_n}\}\in\Fc^T_h$ of e.g. the upper right corner follows by reformulation
\[Z_{h_n}^{-1}U_{2s}Z_{h_n}\cdot Z_{h_n}^{-1}U_{-s}R_2U_sZ_{h_n}\cdot
Z_{h_n}^{-1}U_{-s}A_{h_n}U_sZ_{h_n}\cdot Z_{h_n}^{-1}U_{-s}S_2U_sZ_{h_n}
\to 0\cdot \Hs^{-s}\{A_{h_n}\}\cdot I.\]
Then, by this and the 1st step, the formulas in Theorems \ref{TTstructMain} 
and \ref{TRich} easily translate to the assertion of the present Theorem \ref{TFlip}.

\textbf{4th step:}
All we are left with is to remove the $\Jc^S$-Fredholm conditions in the general 
Theorems \ref{TTstructMain} and \ref{TRich}, again by localization as in Section \ref{SLocal}.
Obviously, 
\[\Dc:=\left\{\left\{\begin{pmatrix}\varphi_n I& 0 \\0&\varphi_n I\end{pmatrix}\right\}:
\varphi\in C(\dot{\Rb}), \varphi=-\varphi\right\}\]
is a closed commutative subalgebra of $\Fc^S\subset\Rc^S$, and $\Dc$ as well as
all $(\Dc_h+\Jc^S_h)/\Jc^S_h$ are isometrically isomorphic to $C[0,\infty)$.
Furthermore $[\Cb,\tilde{\Ab}]\in\Jc^S$ for every $\Cb\in\Dc$ and every $\Ab\in\Fc_{\Bc}$,
by application of Corollary \ref{CComm1} and the relation $[J,\varphi_n]=0$.
Then, one still identifies the local cosets of $\tilde{\Ab}_h$, with 
$\{\Ws^{c,*}(\tilde{\Ab}_h)\}$ at the points $s\in[0,1)$ and with a certain 
multiple of the identity at the points $s>1$. 
At the point $s=1$ the new snapshot $\Ws^{+,*}(\tilde{\Ab}_h)$ does the job, 
and Allan-Douglas finishes the proof.
\end{proof}

\section{Conclusions}
The approach of this paper provides not only the well known results on the stability / 
applicability
of the Finite Section Method for classes of convolution type operators with 
piecewise continuous and slowly oscillating data \cite{KMS2010,RSS2010a}, but goes far
beyond: On the one hand it completes the picture by describing also non-stable
sequences in terms of the asymptotic behavior of their approximation numbers and 
Fredholm indices. On the other hand it covers much larger classes of data, including
almost periodic, even $\BUC,$ multipliers $b$,  
as well as operators of multiplication 
with almost arbitrary functions $a$ having restrictions only on the behavior
at infinity. Actually, our approach permits to build upon much simpler (and 
probably even more general) definitions of the multiplier algebras. Finally, 
also the flip hence Hankel operators are included.

Moreover, the constructions and proofs have become much simpler. In the previous
works the localization had to be done over more involved commutative subalgebras
of sequences arising from slowly oscillating functions whose definition is technical 
and the characterization of their maximal ideal spaces required hard proofs.
This is now replaced by the simple algebra $\Cc$. The keys for this simplification
are the application of the $\Pc$-theory, the concept of rich sequences, and the
valuable characterizations of quasi-banded operators.

Another remarkable point is that the role of the snapshots changed dramatically.
While in the previous literature the $\Ws$-directions served for the construction 
a sufficiently large ideal $\Jc$ such that the respective quotient has a large
center, and the $\Hs$-snapshots provided the desired representatives for
the identification of the local cosets, we now turn the table.
In the present paper only the $\Ws$-snapshots are used in the identification of
the local cosets, whereas the $\Hs$-directions provide the appropriate $\Jc^T$.


\def\cprime{$'$} \def\cprime{$'$}

\end{document}